\theoremstyle{definition}
\newtheorem{theorem}{Theorem}[section]
\theoremstyle{definition}
\theoremstyle{definition}
\newtheorem{lemma}[theorem]{Lemma}
\theoremstyle{definition}
\newtheorem{proposition}[theorem]{Proposition}
\theoremstyle{definition}
\newtheorem{definition}{Definition}[section]
\theoremstyle{definition}
\newtheorem{assumption}{Assumption}[section]
\theoremstyle{remark}
\DeclareMathOperator*{\argmin}{\arg\!\min}
\DeclareMathOperator*{\argmax}{\arg\!\max}
\title{Distributed and Asynchronous Algorithms for N-block Convex Optimization with Coupling Constraints}
\author{
  Run~Chen\\
  School of Industrial Engineering\\
  Purdue University\\
  West Lafayette, IN 47906\\
  \texttt{chen885@purdue.edu}\\
  \And
  Andrew L.~Liu\\
  School of Industrial Engineering\\
  Purdue University\\
  West Lafayette, IN 47906\\
  \texttt{andrewliu@purdue.edu}\\
}
\begin{document}
\maketitle
%
%
%
%
%
%
%
%
\section{Introduction}
In this work, we focus on designing a distributed algorithm for solving block-separable convex optimization problems with both linear and nonlinear coupling constraints. More specifically, we consider the following problem:
\begin{equation}\label{eq: GP}
\begin{aligned}
\underset{\mathbf{x}_1, \dots, \mathbf{x}_N}{\text{minimize}} \quad & f(\mathbf{x}_1,\ldots, \mathbf{x}_N) = \sum_{i=1}^N f_i(\mathbf{x}_i) \\
\text{subject to} \quad &\mathbf{x}_i \in \mathcal{X}_i, \quad i = 1, \dots, N, \\
&\sum_{i=1}^N A_i \mathbf{x}_i = \mathbf{b}, \\
&g_j(\mathbf{x}_1, \dots, \mathbf{x}_N) = \sum_{i=1}^N g_{ji}(\mathbf{x}_i) \leq 0, \quad j = 1, \dots, M,
\end{aligned}
\end{equation}
where each block of decision variable $\mathbf{x}_i \in \mathbb{R}^{n_i}$ is constrained by a closed and convex set $\mathcal{X}_i \subset \mathbb{R}^{n_i}$ for all $i = 1, \dots, N$, and $\sum_{i=1}^N n_i = n$. The objective function $f: \mathbb{R}^n \to \mathbb{R}$ is block-separable, and each function $f_i: \mathbb{R}^{n_i} \to \mathbb{R}$ is assumed to be continuous and convex for all $i = 1, \dots, N$. All blocks $\mathbf{x}_i$'s are coupled in a linear equality constraint, where each $A_i \in \mathbb{R}^{m \times n_i}$ is a given matrix for all $i = 1, \dots, N$, and $\mathbf{b} \in \mathbb{R}^m$ is a given vector. All blocks $\mathbf{x}_i$'s are also coupled in a system of nonlinear inequality constraints, where each constraint function $g_j: \mathbb{R}^n \to \mathbb{R}$ is also block-separable and each function $g_{ji}: \mathbb{R}^{n_i} \to \mathbb{R}$ is assumed to be continuous and convex for all $i = 1, \dots, N$ and $j = 1, \dots, M$. 
A wide range of application problems can be mathematically formulated as optimization problems of the form \eqref{eq: GP}, arising from the areas including optimal control \cite{yu2017simple}, network optimization \cite{hallac2015network}, statistical learning \cite{boyd2011distributed} and etc.
\par The alternating direction method of multipliers (ADMM) \cite{boyd2011distributed}, as well as its variants \cite{deng2017parallel,wang2014parallel,wang2013solving}, is an efficient distributed algorithm for solving convex block-separable optimization problems with linear coupling constraints, but problems of \eqref{eq: GP} with nonlinear coupling constraints can not be directly handled by ADMM-typed algorithms.
%
\par To overcome the above-mentioned limitations of the ADMM-typed algorithms, we first extend the $2$-block Predictor Corrector Proximal Multiplier Method (PCPM) algorithm to solve an $N$-block convex optimization problem with both linear and nonlinear coupling constraints. We further extend the $N$-block PCPM algorithm to an asynchronous iterative scheme, where a maximum tolerable delay is allowed for each distributed unit, and apply it to solve an $N$-block convex optimization problem with general linear coupling constraints.
\par The remainder of the chapter is organized as follows. In Section~\ref{sec: N-PCPM}, we present an extended $N$-block PCPM algorithm for solving general constrained $N$-block convex optimization problems. We first establish global convergence under mild assumptions, and then prove the linear convergence rate with slightly stronger assumptions. In Section~\ref{sec: AN-PCPM}, we further extend the N-block PCPM algorithm to an asynchronous scheme with the bounded delay assumption. We establish both convergence and global sub-linear convergence rate under the conditions of strong convexity. Section~\ref{sec: Num} presents the numerical results of applying the proposed algorithms to solve a graph optimization problem arising from an application of housing price prediction. Finally, Section \ref{sec: Conclusion} concludes this chapter with discussions of the limitations of the algorithms and possible future research directions. 
\section{Extending the PCPM Algorithm to Solving General Constrained N-block Convex Optimization Problems}\label{sec: N-PCPM}
\subsection{PCPM Algorithm}\label{sec: 2-PCPM}
To present our distributed algorithm, we first briefly describe the original PCPM algorithm \cite{chen1994proximal} to make this paper self-contained. For this purpose, it suffices to consider a 2-block linearly constrained convex optimization problem:  
\begin{equation}\label{eq: 2_block CP Problem Form}
\begin{aligned}
\underset{\mathbf{x}_1 \in \mathbb{R}^{n_1},\ \mathbf{x}_2 \in \mathbb{R}^{n_2}}{\text{minimize}} \quad &f_1(\mathbf{x}_1) + f_2(\mathbf{x}_2) \\
\text{subject to} \quad &A_1 \mathbf{x}_1 + A_2 \mathbf{x}_2 = \mathbf{b}, \quad (\bm{\lambda})
\end{aligned}
\end{equation}
where $f_1: \mathbb{R}^{n_1} \to (-\infty, +\infty]$ and $f_2: \mathbb{R}^{n_2} \to (-\infty, +\infty]$ are closed proper convex functions, $A_1 \in \mathbb{R}^{m \times n_1}$ and $A_2 \in \mathbb{R}^{m \times n_2}$ are full row-rank matrices, $\mathbf{b} \in \mathbb{R}^m$ is a given vector, and $\bm{\lambda} \in \mathbb{R}^m$ is the corresponding  Lagrangian multiplier associated with the linear equality constraint. The classic Lagrangian function $\mathcal{L}: \mathbb{R}^{n_1} \times \mathbb{R}^{n_2} \times \mathbb{R}^m \to \mathbb{R}$ is defined as:
\begin{equation}
\mathcal{L}(\mathbf{x}_1, \mathbf{x}_2, \bm{\lambda}) = f_1(\mathbf{x}_1) + f_2(\mathbf{x}_2) + \bm{\lambda}^T (A_1 \mathbf{x}_1 + A_2 \mathbf{x}_2 - \mathbf{b}).  
\end{equation}
It is well-known that for a convex problem of the specific form in \eqref{eq: 2_block CP Problem Form} (where the linear constraint qualification automatically holds), finding an optimal solution is equivalent to finding a saddle point $(\mathbf{x}_1^*, \mathbf{x}_2^*, \bm{\lambda}^*)$ such that $\mathcal{L}(\mathbf{x}_1^*, \mathbf{x}_2^*, \bm{\lambda}) \leq \mathcal{L}(\mathbf{x}_1^*, \mathbf{x}_2^*, \bm{\lambda}^*) \leq \mathcal{L}(\mathbf{x}_1, \mathbf{x}_2, \bm{\lambda}^*)$. To find such a saddle point, a simple dual decomposition algorithm can be applied to $\mathcal{L}(\mathbf{x}_1, \mathbf{x}_2, \bm{\lambda})$. More specifically, at each iteration $k$, given a fixed Lagrangian multiplier $\bm{\lambda}^k$, the primal decision variables $(\mathbf{x}_1^{k+1}, \mathbf{x}_2^{k+1})$ can be  obtained, in parallel, by minimizing $\mathcal{L}(\mathbf{x}_1, \mathbf{x}_2, \bm{\lambda}^k)$. Then a dual update $\bm{\lambda}^{k+1} = \bm{\lambda}^k + \rho (A_1 \mathbf{x}_1^{k+1} + A_2 \mathbf{x}_2^{k+1} - \mathbf{}b)$ is performed. 
\par While the above algorithmic idea is simple, it is well-known that convergence cannot be established without more restrictive assumptions, such as strict convexity of $f_1$ and $f_2$ (e.g., Theorem~26.3 in \cite{rockafellar2015convex}). One approach to overcome such difficulties is the proximal point algorithm, which obtains $(\mathbf{x}_1^{k+1},\mathbf{x}_2^{k+1})$  by minimizing the proximal augmented Lagrangian function defined as $\mathcal{L}_{\rho}(\mathbf{x}_1, \mathbf{x}_2, \bm{\lambda}^k) := \mathcal{L}(\mathbf{x}_1, \mathbf{x}_2, \bm{\lambda}^k) + \frac{\rho}{2} \lVert A_1 \mathbf{x}_1 + A_2 \mathbf{x}_2 - \mathbf{b} \rVert_2^2 + \frac{1}{2 \rho} \lVert \mathbf{x}_1 - \mathbf{x}_1^k \rVert_2^2 + \frac{1}{2 \rho} \lVert \mathbf{x}_2 - \mathbf{x}_2^k \rVert_2^2$. The parameter $\rho$ is given, which determines the step-size for updating both primal and dual variables in each iteration, and plays a key role in the convergence of the overall algorithm.  The primal minimization step now becomes:
\begingroup
\begin{align}
(\mathbf{x}_1^{k+1}, \mathbf{x}_2^{k+1}) = \underset{\mathbf{x}_1 \in \mathbb{R}^{n_1}, \mathbf{x}_2 \in \mathbb{R}^{n_2}}{\argmin} &f_1(\mathbf{x}_1) + f_2(\mathbf{x}_2) + (\bm{\lambda}^k)^T (A_1 \mathbf{x}_1 + A_2 \mathbf{x}_2 - \mathbf{b}) \nonumber \\
+ &\frac{\rho}{2} \lVert A_1 \mathbf{x}_1 + A_2 \mathbf{x}_2 - \mathbf{b} \rVert_2^2 \nonumber \\
+ &\frac{1}{2 \rho} \lVert \mathbf{x}_1 - \mathbf{x}_1^k \rVert_2^2 + \frac{1}{2 \rho} \lVert \mathbf{x}_2 - \mathbf{x}_2^k \rVert_2^2. \label{eq: proximal point primal minimization}
\end{align}
\endgroup
With \eqref{eq: proximal point primal minimization}, however, $\mathbf{x}_1^{k+1}$ and $\mathbf{x}_2^{k+1}$ can no longer be obtained in parallel due to the augmented term $\lVert A_1 \mathbf{x}_1 + A_2 \mathbf{x}_2 - \mathbf{b} \rVert_2^2$. To overcome this difficulty, the PCPM algorithm introduces a predictor variable $\bm{\mu}^{k+1}$:
\begin{equation}\label{eq:PCPM_Predictor}
\bm{\mu}^{k+1} \coloneqq \bm{\lambda}^k + \rho (A_1 \mathbf{x}_1^k + A_2 \mathbf{x}_2^k - \mathbf{b}).
\end{equation}
Using the predictor variable, the optimization in \eqref{eq: proximal point primal minimization} can be approximated as:
\begingroup
\begin{align}
(\mathbf{x}_1^{k+1}, \mathbf{x}_2^{k+1}) = \underset{\mathbf{x}_1 \in \mathbb{R}^{n_1}, \mathbf{x}_2 \in \mathbb{R}^{n_2}}{\argmin} &f_1(\mathbf{x}_1) + f_2(\mathbf{x}_2) + (\bm{\mu}^{k+1})^T (A_1 \mathbf{x}_1 + A_2 \mathbf{x}_2 - \mathbf{b}) \nonumber \\
+ &\frac{1}{2 \rho} \lVert \mathbf{x}_1 - \mathbf{x}_1^k \rVert_2^2 + \frac{1}{2 \rho} \lVert \mathbf{x}_2 - \mathbf{x}_2^k \rVert_2^2, \label{eq: PCPM primal minimization}
\end{align}
\endgroup
which allows $\mathbf{x}_1^{k+1}$ and $\mathbf{x}_2^{k+1}$ to be obtained in parallel again. After solving \eqref{eq: PCPM primal minimization}, the PCPM algorithm updates the dual variable as follows: 
\begin{equation}\label{eq:PCPM_Corrector}
\bm{\lambda}^{k+1} = \bm{\lambda}^k + \rho (A_1 \mathbf{x}_1^{k+1} + A_2 \mathbf{x}_2^{k+1} - \mathbf{b}),
\end{equation} 
which is referred to as a corrector update. 
\subsection{N-block PCPM Algorithm for General Constrained Convex Optimization Problems}
In the $2$-block PCPM algorithm presented in Section~\ref{sec: 2-PCPM}, we observe that the introduction of the predictor variable eliminates the quadratic term in the proximal augmented Lagrangian function and make the primal minimization step parallelizable again, which is a major difference from the ADMM algorithm. For an $N$-block convex optimization problem with additional nonlinear coupling constraints:
\begin{equation}\label{eq: GP again}
\begin{aligned}
\underset{\mathbf{x}_1, \dots, \mathbf{x}_N}{\text{minimize}} \quad &\sum_{i=1}^N f_i(\mathbf{x}_i) \\
\text{subject to} \quad &\mathbf{x}_i \in \mathcal{X}_i, \quad i = 1, \dots, N, \\
&\sum_{i=1}^N A_i \mathbf{x}_i = \mathbf{b}, \quad (\bm{\lambda}) \\
&\sum_{i=1}^N g_{ji}(\mathbf{x}_i) \leq 0, \quad j = 1, \dots, M, \quad (\mu_j)
\end{aligned}
\end{equation}
the potential coupling caused by the quadratic term $\lVert \sum_{i=1}^N g_{ji}(\mathbf{x}_i) \rVert_2^2$ could be even worse. Using the same technique of introducing the predictor variable, we extend the $2$-block PCPM algorithm for solving \eqref{eq: GP again}.
\par First, we make a blanket assumption on problem \eqref{eq: GP again} throughout this chapter that the Slater's constraint qualification (CQ) holds.
\begin{assumption}[Slater's CQ]\label{assp: N-PCPM Slater}
There exists a point $(\bar{\mathbf{x}}_1, \dots, \bar{\mathbf{x}}_N)$ such that
$$
\Bigg\{\bar{\mathbf{x}}_i \in relint(\mathcal{X}_i), \quad i = 1, \dots, N \Bigg| \begin{array}{ll}\sum_{i=1}^N A_i \bar{\mathbf{x}}_i = \mathbf{b} \\ \sum_{i=1}^N g_{ji}(\bar{\mathbf{x}}_i) < 0, \quad j = 1, \dots, M\end{array}\Bigg\},
$$
where $relint(\mathcal{X}_i)$ denotes the relative interior of the convex set $\mathcal{X}_i$ for all $i = 1, \dots, N$.
\end{assumption}
To apply the PCPM algorithm to \eqref{eq: GP again}, at each iteration $k$, with a given primal dual pair, $\big(\mathbf{x}_1^k, \dots, \mathbf{x}_N^k, \bm{\lambda}^k, \bm{\mu}^k \coloneqq (\mu_1^k, \dots, \mu_M^k)^T\big)$, we start with a predictor update:
\begin{equation}\label{eq: N-block PCPM predictor update}
\begin{aligned}
&\textbf{(predictor update)}: \\
&\bm{\gamma}^{k+1} = \bm{\lambda}^k +\rho\Big(\sum_{i=1}^N A_i \mathbf{x}_i^k - \mathbf{b}\Big), \\
&\nu_j^{k+1} = \Pi_{\mathbb{R}_+}\Big[\mu_j^k + \rho \sum_{i=1}^N g_{ji}(\mathbf{x}_i^k)\Big], \quad j = 1, \dots, M,
\end{aligned}
\end{equation}
where $\Pi_{\mathbb{Z}}(\mathbf{z})$ denotes the projection of a vector $\mathbf{z} \in \mathbb{R}^n$ onto a set $\mathbb{Z} \subset \mathbb{R}^n$, and $\mathbb{R}_+$ refers to the set of all non-negative real numbers.
\par After the predictor update, we update the primal variables $(\mathbf{x}_1^{k+1}, \dots, \mathbf{x}_N^{k+1})$ by minimizing the Lagrangian function $\mathcal{L}(\mathbf{x}_1, \dots, \mathbf{x}_N, \bm{\gamma}^{k+1}, \bm{\nu}^{k+1})$ evaluated at the predictor variable $\big(\bm{\gamma}^{k+1}, \bm{\nu}^{k+1} \coloneqq (\nu_1^{k+1}, \dots, \nu_M^{k+1})^T\big)$, plus the proximal terms. The primal minimization step can be decomposed as
\begin{equation}\label{eq: N-block PCPM primal minimization}
\begin{aligned}
&\textbf{(primal minimization)}: \\
&\mathbf{x}_i^{k+1} = \underset{\mathbf{x}_i \in \mathcal{X}_i}{\argmin} \enspace f_i(\mathbf{x}_i) + (\bm{\gamma}^{k+1})^T A_i \mathbf{x}_i + \sum_{j=1}^M \nu_j^{k+1} g_{ji}(\mathbf{x}_i) + \frac{1}{2 \rho} \lVert \mathbf{x}_i - \mathbf{x}_i^k \rVert_2^2, \\
&i = 1, \dots, N.
\end{aligned}
\end{equation}
A corrector update is then performed for each Lagrangian multiplier $(\bm{\lambda}^{k+1}, \bm{\mu}^{k+1})$: 
\begin{equation}\label{eq: N-block PCPM corrector update}
\begin{aligned}
&\textbf{(dual corrector)}: \\
&\bm{\lambda}^{k+1} = \bm{\lambda}^k +\rho\Big(\sum_{i=1}^N A_i \mathbf{x}_i^{k+1} - \mathbf{b}\Big), \\
&\mu_j^{k+1} = \Pi_{\mathbb{R}_+}\Big[\mu_j^k + \rho \sum_{i=1}^N g_{ji}(\mathbf{x}_i^{k+1})\Big], \quad j = 1, \dots, M.
\end{aligned}
\end{equation}
\par The overall structure of $N$-block PCPM algorithm is presented in Algorithm~\ref{alg: N-PCPM} below.
\begin{algorithm}[ht]
\caption{$N$-PCPM}\label{alg: N-PCPM}
\begin{algorithmic}[1]
\State \textbf{Initialization} choose an arbitrary starting point $(\mathbf{x}_1^0, \dots, \mathbf{x}_N^0, \bm{\lambda}^0, \bm{\nu}^0)$.
\State $k \gets 0$.
\While{termination conditions are not met}
\State (Predictor update) \par \textbf{update} $(\bm{\gamma}^{k+1}, \bm{\nu}^{k+1})$ according to \eqref{eq: N-block PCPM predictor update};
\State (Primal minimization) \par \textbf{update} $(\mathbf{x}_1^{k+1}, \dots, \mathbf{x}_N^{k+1})$ according to \eqref{eq: N-block PCPM primal minimization};
\State (Corrector update) \par \textbf{update} $(\bm{\lambda}^{k+1}, \bm{\mu}^{k+1})$ according to \eqref{eq: N-block PCPM corrector update};
\State $k \leftarrow k+1$
\EndWhile
\State \textbf{return} $(\mathbf{x}_1^k, \dots, \mathbf{x}_N^k, \bm{\lambda}^k, \bm{\nu}^k)$.
\end{algorithmic}
\end{algorithm}
\subsection{Convergence Analysis}\label{sec: N-PCPM convergence}
We make the following additional assumptions on the optimization problem \eqref{eq: GP again}.
\begin{assumption}[Lipschitz Continuity]
For all $j = 1 \dots M$ and $i = 1 \dots N$, each single-valued function $g_{ij}: \mathcal{X}_i \to \mathbb{R}$ is Lipschitz continuous with modulus of $L_{ji}$, i.e., $\lVert g_{ji}(\mathbf{x}_1) - g_{ji}(\mathbf{x}_2)\rVert_2 \leq L_{ji} \lVert \mathbf{x}_1 - \mathbf{x}_2 \rVert_2$ for any $\mathbf{x}_1, \mathbf{x}_2 \in \mathcal{X}_i$. 
\end{assumption}
\begin{assumption}[Existence of a Saddle Point]\label{assp: N-PCPM Saddle Point}
For the Lagrangian function of \eqref{eq: GP again}: 
\begin{equation}\label{eq: GP Lagrangian}
\mathcal{L}(\mathbf{x}_1, \dots, \mathbf{x}_N, \bm{\lambda}, \bm{\mu}) \coloneqq \sum_{i=1}^N f_i(\mathbf{x}_i) + \bm{\lambda}^T \Big(\sum_{i=1}^N A_i \mathbf{x}_i - \mathbf{b}\Big) + \sum_{j=1}^M \mu_j \sum_{i=1}^N g_{ji}(\mathbf{x}_i),
\end{equation}
we assume that a saddle point $(\mathbf{x}_1^*, \dots, \mathbf{x}_N^*, \bm{\lambda}^*, \bm{\mu}^*)$ exists; that is, for any $\mathbf{x}_i \in \mathcal{X}_i$, $i = 1, \dots, N$, $\bm{\lambda} \in \mathbb{R}^m$ and $\bm{\mu} \in \mathbb{R}_+^M$, 
\begin{equation}
\mathcal{L}(\mathbf{x}_1^*, \dots, \mathbf{x}_N^*, \bm{\lambda}, \bm{\mu}) \leq \mathcal{L}(\mathbf{x}_1^*, \dots, \mathbf{x}_N^*, \bm{\lambda}^*, \bm{\mu}^*) \leq \mathcal{L}(\mathbf{x}_1, \dots, \mathbf{x}_N, \bm{\lambda}^*, \bm{\mu}^*).
\end{equation}
\end{assumption}
Note that coupled with the blanket Assumption~\ref{assp: N-PCPM Slater} that Slater's CQ holds for the optimization problem \eqref{eq: GP again}, the above assumption is equivalent to say that an optimal solution of \eqref{eq: GP again} is assumed to exist (see Corollary 28.3.1 in \cite{rockafellar2015convex}).
\par Next, we derive some essential lemmas for constructing the main convergence proof. The following lemma is due to Proposition~$6$ in \cite{rockafellar1976monotone}, and for completeness, we provide the detailed statements below.
\begin{lemma}[Inequality of Proximal Minimization Point]\label{lem: Proximal Minimization}
Given a closed, convex set $\mathbb{Z} \subset \mathbb{R}^n$, and a continuous, convex function $F: \mathbb{Z} \to \mathbb{R}$. With a given  point $\bar{\mathbf{z}} \in \mathbb{Z}$ and a positive number $\rho > 0$, if $\widehat{\mathbf{z}}$ is a proximal minimization point; i.e. $\widehat{\mathbf{z}} \coloneqq \arg \underset{\mathbf{z} \in \mathbb{Z}}{\min}\ F(\mathbf{z}) + \frac{1}{2 \rho} \lVert \mathbf{z} - \bar{\mathbf{z}} \rVert_2^2$, then we have that 
\begin{equation}
2 \rho [F(\widehat{\mathbf{z}}) - F(\mathbf{z})] \leq \lVert \bar{\mathbf{z}} - \mathbf{z} \rVert_2^2 - \lVert \widehat{\mathbf{z}} - \mathbf{z} \rVert_2^2 - \lVert \widehat{\mathbf{z}} - \bar{\mathbf{z}} \rVert_2^2, \quad \forall \mathbf{z} \in \mathbb{Z}.
\end{equation}
\end{lemma}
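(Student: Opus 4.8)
The plan is to characterize the proximal minimizer $\widehat{\mathbf{z}}$ through a first-order optimality condition and then convert that condition into the desired quadratic inequality by a single polarization identity. Because $F$ is assumed only continuous and convex (not differentiable), I would work with subgradients rather than gradients throughout, and because the minimization is over the set $\mathbb{Z}$, I would use the normal-cone (variational-inequality) form of optimality.

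First I would record the optimality condition for the constrained proximal subproblem. Since $\widehat{\mathbf{z}}$ minimizes the convex function $\Phi(\mathbf{z}) := F(\mathbf{z}) + \frac{1}{2\rho}\lVert \mathbf{z} - \bar{\mathbf{z}}\rVert_2^2$ over the closed convex set $\mathbb{Z}$, the inclusion $0 \in \partial F(\widehat{\mathbf{z}}) + \frac{1}{\rho}(\widehat{\mathbf{z}} - \bar{\mathbf{z}}) + N_{\mathbb{Z}}(\widehat{\mathbf{z}})$ (where $N_{\mathbb{Z}}$ is the normal cone of $\mathbb{Z}$) yields a subgradient $\mathbf{s} \in \partial F(\widehat{\mathbf{z}})$ satisfying
\[
\Big\langle \mathbf{s} + \tfrac{1}{\rho}(\widehat{\mathbf{z}} - \bar{\mathbf{z}}),\ \mathbf{z} - \widehat{\mathbf{z}}\Big\rangle \geq 0, \qquad \forall\, \mathbf{z} \in \mathbb{Z}.
\]
Next I would invoke convexity of $F$ through the subgradient inequality $F(\mathbf{z}) \geq F(\widehat{\mathbf{z}}) + \langle \mathbf{s},\, \mathbf{z} - \widehat{\mathbf{z}}\rangle$, equivalently $F(\widehat{\mathbf{z}}) - F(\mathbf{z}) \leq -\langle \mathbf{s},\, \mathbf{z} - \widehat{\mathbf{z}}\rangle$. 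Substituting the lower bound $\langle \mathbf{s},\, \mathbf{z} - \widehat{\mathbf{z}}\rangle \geq -\frac{1}{\rho}\langle \widehat{\mathbf{z}} - \bar{\mathbf{z}},\, \mathbf{z} - \widehat{\mathbf{z}}\rangle$ from the optimality condition and multiplying by $2\rho > 0$ gives
\[
2\rho\big[F(\widehat{\mathbf{z}}) - F(\mathbf{z})\big] \leq 2\big\langle \widehat{\mathbf{z}} - \bar{\mathbf{z}},\ \mathbf{z} - \widehat{\mathbf{z}}\big\rangle .
\]
I would then close the argument with the three-point identity $2\langle \mathbf{a} - \mathbf{c},\, \mathbf{b} - \mathbf{a}\rangle = \lVert \mathbf{c} - \mathbf{b}\rVert_2^2 - \lVert \mathbf{a} - \mathbf{b}\rVert_2^2 - \lVert \mathbf{a} - \mathbf{c}\rVert_2^2$, applied with $\mathbf{a} = \widehat{\mathbf{z}}$, $\mathbf{b} = \mathbf{z}$, $\mathbf{c} = \bar{\mathbf{z}}$, which rewrites the right-hand side as exactly $\lVert \bar{\mathbf{z}} - \mathbf{z}\rVert_2^2 - \lVert \widehat{\mathbf{z}} - \mathbf{z}\rVert_2^2 - \lVert \widehat{\mathbf{z}} - \bar{\mathbf{z}}\rVert_2^2$, establishing the claim for every $\mathbf{z} \in \mathbb{Z}$.

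I expect the only genuine obstacle to be the rigorous justification of the optimality condition in this nonsmooth, set-constrained setting: namely, applying the subdifferential sum rule to split $\partial\Phi(\widehat{\mathbf{z}})$ and the normal-cone characterization of a constrained minimizer. Both are clean here, since the quadratic proximal term is finite-valued and differentiable everywhere (so the sum rule holds without an extra constraint qualification), and that same term makes $\Phi$ strongly convex and coercive, guaranteeing that $\widehat{\mathbf{z}}$ is well defined as the unique minimizer. Once the variational inequality is secured, the remaining algebra is routine. An equivalent, slightly slicker route avoids subgradients entirely: invoking the $\tfrac{1}{\rho}$-strong convexity of $\Phi$ to obtain the growth bound $\Phi(\mathbf{z}) \geq \Phi(\widehat{\mathbf{z}}) + \frac{1}{2\rho}\lVert \mathbf{z} - \widehat{\mathbf{z}}\rVert_2^2$ at the minimizer and expanding the proximal terms reproduces the stated inequality directly.
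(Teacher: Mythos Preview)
Your proposal is correct. The paper's proof is precisely the ``slicker route'' you describe at the end: it defines $\Phi(\mathbf{z}) = F(\mathbf{z}) + \frac{1}{2\rho}\lVert \mathbf{z} - \bar{\mathbf{z}}\rVert_2^2$, notes $\mathbf{0}\in\partial\Phi(\widehat{\mathbf{z}})$, and invokes $\tfrac{1}{\rho}$-strong convexity to obtain $2\rho[\Phi(\mathbf{z})-\Phi(\widehat{\mathbf{z}})]\ge\lVert \widehat{\mathbf{z}}-\mathbf{z}\rVert_2^2$, from which the claim follows by expanding the quadratic terms; your primary variational-inequality/three-point-identity argument is an equally valid and more explicit derivation of the same inequality.
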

\begin{proof}
Denote $\Phi(\mathbf{z}) = F(\mathbf{z}) + \frac{1}{2 \rho} \lVert \mathbf{z} - \bar{\mathbf{z}} \rVert_2^2$. By the definition of $\widehat{\mathbf{z}}$, we have $\mathbf{0} \in \partial_{\mathbf{z}} \Phi(\widehat{\mathbf{z}})$. Since $\Phi(\mathbf{z})$ is strongly convex with modulus $\frac{1}{\rho}$, it follows that $2 \rho \big[\Phi(\mathbf{z}) - \Phi(\widehat{\mathbf{z}})\big] \geq \lVert \widehat{\mathbf{z}} - \mathbf{z} \rVert_2^2$ for any $\mathbf{z} \in \mathbb{Z}$.
\end{proof}
\begin{lemma} \label{lem: Dual}
The update steps \eqref{eq: N-block PCPM predictor update} and \eqref{eq: N-block PCPM corrector update} are equivalent to obtaining proximal minimization points as follows:
\begin{equation}
\begin{aligned}
(\bm{\gamma}^{k+1}, \bm{\nu}^{k+1}) = \underset{\bm{\lambda} \in \mathbb{R}^m, \bm{\mu} \in \mathbb{R}_+^M}{\argmin} \enspace -&\mathcal{L}(\mathbf{x}_1^k, \dots, \mathbf{x}_N^k, \bm{\lambda}, \bm{\mu}) \\
+ &\frac{1}{2 \rho} \lVert \bm{\lambda} - \bm{\lambda}^k \rVert_2^2 + \frac{1}{2 \rho} \lVert \bm{\mu} - \bm{\mu}^k \rVert_2^2, \\
(\bm{\lambda}^{k+1}, \bm{\mu}^{k+1}) = \underset{\bm{\lambda} \in \mathbb{R}^m, \bm{\mu} \in \mathbb{R}_+^M}{\argmin} \enspace -&\mathcal{L}(\mathbf{x}_1^{k+1}, \dots, \mathbf{x}_N^{k+1}, \bm{\lambda}, \bm{\mu}) \\
+ &\frac{1}{2 \rho} \lVert \bm{\lambda} - \bm{\lambda}^k \rVert_2^2 + \frac{1}{2 \rho} \lVert \bm{\mu} - \bm{\mu}^k \rVert_2^2.
\end{aligned}
\end{equation}
\end{lemma}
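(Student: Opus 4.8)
The plan is to verify that the two proximal minimization problems in the statement admit closed-form solutions coinciding exactly with the predictor update \eqref{eq: N-block PCPM predictor update} and the corrector update \eqref{eq: N-block PCPM corrector update}. Because the two claimed equalities differ only in whether the primal iterate $\mathbf{x}^k$ or $\mathbf{x}^{k+1}$ is substituted into $\mathcal{L}$, it suffices to establish the equivalence for the predictor line; the corrector case then follows verbatim by relabeling. Note that each proximal objective is strongly convex in $(\bm{\lambda}, \bm{\mu})$ with modulus $\tfrac{1}{\rho}$ (the $-\mathcal{L}$ term is affine in the multipliers and the proximal terms are strongly convex), so in each case the argmin is attained at a unique point, and it is legitimate to speak of \emph{the} solution.

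First I would fix the primal block at $\mathbf{x}^k$ and expand $-\mathcal{L}(\mathbf{x}_1^k, \dots, \mathbf{x}_N^k, \bm{\lambda}, \bm{\mu})$ using the definition \eqref{eq: GP Lagrangian}. The term $\sum_{i=1}^N f_i(\mathbf{x}_i^k)$ is constant in $(\bm{\lambda}, \bm{\mu})$ and may be discarded from the argmin. What remains is an objective that is \emph{fully separable}: it splits into one part depending only on $\bm{\lambda}$ and a sum of $M$ scalar parts, one per multiplier $\mu_j$. This separability is the crucial structural observation, as it allows me to solve for $\bm{\lambda}$ and for each $\mu_j$ independently.

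Next I would handle each block. For $\bm{\lambda}$, the minimization is unconstrained over $\mathbb{R}^m$ of the strongly convex quadratic $-\bm{\lambda}^T(\sum_{i=1}^N A_i \mathbf{x}_i^k - \mathbf{b}) + \frac{1}{2\rho}\lVert \bm{\lambda} - \bm{\lambda}^k \rVert_2^2$; setting the gradient to zero and solving gives precisely $\bm{\gamma}^{k+1} = \bm{\lambda}^k + \rho(\sum_{i=1}^N A_i \mathbf{x}_i^k - \mathbf{b})$, the first line of \eqref{eq: N-block PCPM predictor update}. For each $\mu_j$, the minimization of $-\mu_j \sum_{i=1}^N g_{ji}(\mathbf{x}_i^k) + \frac{1}{2\rho}(\mu_j - \mu_j^k)^2$ is constrained to $\mathbb{R}_+$; the unconstrained minimizer is $\mu_j^k + \rho \sum_{i=1}^N g_{ji}(\mathbf{x}_i^k)$, and since the constrained minimizer over the half-line equals the projection of the unconstrained minimizer, I recover $\nu_j^{k+1} = \Pi_{\mathbb{R}_+}[\mu_j^k + \rho \sum_{i=1}^N g_{ji}(\mathbf{x}_i^k)]$, matching the second line.

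I do not anticipate a genuine obstacle: the argument is a direct computation resting on separability and the first-order optimality of strongly convex programs. The only point warranting a word of care is the projection identity for the $\mu_j$-blocks, namely that the constrained minimizer of a scalar quadratic over $\mathbb{R}_+$ equals the projection of its unconstrained minimizer onto $\mathbb{R}_+$; this holds because both are characterized by the same variational inequality, or equivalently because projecting onto $\mathbb{R}_+$ is itself the proximal operator of the indicator of $\mathbb{R}_+$. Substituting $\mathbf{x}^{k+1}$ for $\mathbf{x}^k$ throughout yields the corrector equivalence and completes the proof.
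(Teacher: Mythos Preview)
Your proposal is correct. The paper itself states Lemma~\ref{lem: Dual} without proof, treating it as an immediate consequence of the explicit form of the Lagrangian and the proximal term; your direct computation via separability and first-order optimality is exactly the natural argument one would supply, and there is nothing to compare against.
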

Similar to the convergence analysis of the PCPM algorithm in \cite{chen1994proximal}, we now establish some fundamental estimates of the distance at each iteration $k$ between the solution point $(\mathbf{x}_1^{k+1}, \dots, \mathbf{x}_N^{k+1}, \bm{\lambda}^{k+1}, \bm{\mu}^{k+1})$ and the saddle point $(\mathbf{x}_1^*, \dots, \mathbf{x}_N^*, \bm{\lambda}^*, \bm{\mu}^*)$. 
\begin{proposition}\label{prop: distance}
Let $(\mathbf{x}_1^*, \dots, \mathbf{x}_N^*, \bm{\lambda}^*, \bm{\mu}^*)$ be a saddle point of the optimization problem \eqref{eq: GP again}. For all $k \geq 0$, we have that
\begin{equation}\label{eq: N-PCPM Primal Distance}
\begin{aligned}
\sum_{i=1}^N \lVert \mathbf{x}_i^{k+1} - \mathbf{x}_i^* \rVert_2^2 \leq &\sum_{i=1}^N \lVert \mathbf{x}_i^k - \mathbf{x}_i^* \rVert_2^2 - \sum_{i=1}^N \lVert \mathbf{x}_i^{k+1} - \mathbf{x}_i^k \rVert_2^2 \\
+ &2 \rho \Big[(\bm{\lambda}^* - \bm{\gamma}^{k+1})^T \sum_{i=1}^N A_i \mathbf{x}_i^{k+1} + \sum_{j=1}^M (\mu_j^* - \nu_j^{k+1}) \sum_{i=1}^N g_{ji}(\mathbf{x}_i^{k+1})\Big] \\
\end{aligned}
\end{equation}
and
\begin{equation}\label{eq: N-PCPM Dual Distance}
\begin{aligned}
&\lVert \bm{\lambda}^{k+1} - \bm{\lambda}^* \rVert_2^2 + \lVert \bm{\mu}^{k+1} - \bm{\mu}^* \rVert_2^2 \leq \lVert \bm{\lambda}^k - \bm{\lambda}^* \rVert_2^2 + \lVert \bm{\mu}^k - \bm{\mu}^* \rVert_2^2 \\
- &\lVert \bm{\gamma}^{k+1} - \bm{\lambda}^{k+1} \rVert_2^2 - \lVert \bm{\nu}^{k+1} - \bm{\mu}^{k+1} \rVert_2^2 - \lVert \bm{\gamma}^{k+1} - \bm{\lambda}^k \rVert_2^2 - \lVert \bm{\nu}^{k+1} - \bm{\mu}^k \rVert_2^2 \\
+ &2 \rho \Big[(\bm{\gamma}^{k+1} - \bm{\lambda}^{k+1})^T \sum_{i=1}^N A_i \mathbf{x}_i^k + \sum_{j=1}^M (\nu_j^{k+1} - \mu_j^{k+1}) \sum_{i=1}^N g_{ji}(\mathbf{x}_i^k) \\
&\hspace*{12pt} +(\bm{\lambda}^{k+1} - \bm{\lambda}^*)^T \sum_{i=1}^N A_i \mathbf{x}_i^{k+1} + \sum_{j=1}^M (\mu_j^{k+1} - \mu_j^*) \sum_{i=1}^N g_{ji}(\mathbf{x}_i^{k+1})\Big]
\end{aligned}
\end{equation}
\end{proposition}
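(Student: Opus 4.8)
The plan is to prove the two estimates separately; both rest on Lemma~\ref{lem: Proximal Minimization}, applied to the primal update for \eqref{eq: N-PCPM Primal Distance} and to the dual updates for \eqref{eq: N-PCPM Dual Distance}. For the primal estimate, I would first read each primal update \eqref{eq: N-block PCPM primal minimization} as the proximal minimization point over $\mathcal{X}_i$, anchored at $\mathbf{x}_i^k$, of the function $F_i(\mathbf{x}_i) := f_i(\mathbf{x}_i) + (\bm{\gamma}^{k+1})^T A_i \mathbf{x}_i + \sum_{j=1}^M \nu_j^{k+1} g_{ji}(\mathbf{x}_i)$. It is essential here that $\nu_j^{k+1} = \Pi_{\mathbb{R}_+}[\cdot] \geq 0$, so that $F_i$ is a nonnegative combination of convex functions and the hypotheses of Lemma~\ref{lem: Proximal Minimization} are met. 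Applying that lemma with test point $\mathbf{z} = \mathbf{x}_i^*$ and summing over $i$ gives $\sum_i \lVert \mathbf{x}_i^{k+1} - \mathbf{x}_i^* \rVert_2^2 \leq \sum_i \lVert \mathbf{x}_i^k - \mathbf{x}_i^* \rVert_2^2 - \sum_i \lVert \mathbf{x}_i^{k+1} - \mathbf{x}_i^k \rVert_2^2 - 2\rho \sum_i [F_i(\mathbf{x}_i^{k+1}) - F_i(\mathbf{x}_i^*)]$, where the summed $F_i$-difference equals $\sum_i [f_i(\mathbf{x}_i^{k+1}) - f_i(\mathbf{x}_i^*)]$ plus the two bilinear terms evaluated at $(\bm{\gamma}^{k+1}, \bm{\nu}^{k+1})$.

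To control $-\sum_i[f_i(\mathbf{x}_i^{k+1}) - f_i(\mathbf{x}_i^*)]$ I would invoke the primal-minimality half of the saddle-point inequality of Assumption~\ref{assp: N-PCPM Saddle Point}, namely $\mathcal{L}(\mathbf{x}_1^*, \dots, \mathbf{x}_N^*, \bm{\lambda}^*, \bm{\mu}^*) \leq \mathcal{L}(\mathbf{x}_1^{k+1}, \dots, \mathbf{x}_N^{k+1}, \bm{\lambda}^*, \bm{\mu}^*)$, together with the feasibility $\sum_i A_i \mathbf{x}_i^* = \mathbf{b}$, $\sum_i g_{ji}(\mathbf{x}_i^*) \leq 0$ and the complementary slackness $\sum_j \mu_j^* \sum_i g_{ji}(\mathbf{x}_i^*) = 0$ that hold at any saddle point. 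This rearranges to $-\sum_i[f_i(\mathbf{x}_i^{k+1}) - f_i(\mathbf{x}_i^*)] \leq (\bm{\lambda}^*)^T \big(\sum_i A_i \mathbf{x}_i^{k+1} - \mathbf{b}\big) + \sum_j \mu_j^* \sum_i g_{ji}(\mathbf{x}_i^{k+1})$. Substituting this, regrouping the equality-constraint terms by means of $\sum_i A_i \mathbf{x}_i^* = \mathbf{b}$, and finally discarding the term $2\rho \sum_j \nu_j^{k+1} \sum_i g_{ji}(\mathbf{x}_i^*)$, which is nonpositive because $\nu_j^{k+1} \geq 0$ and $\sum_i g_{ji}(\mathbf{x}_i^*) \leq 0$, collects everything into the claimed form \eqref{eq: N-PCPM Primal Distance}.

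For the dual estimate I would use Lemma~\ref{lem: Dual} to view the predictor and corrector as proximal minimization points, over the closed convex set $\mathbb{R}^m \times \mathbb{R}_+^M$ and with the common anchor $(\bm{\lambda}^k, \bm{\mu}^k)$, of $-\mathcal{L}(\mathbf{x}_1^k, \dots, \mathbf{x}_N^k, \cdot, \cdot)$ and $-\mathcal{L}(\mathbf{x}_1^{k+1}, \dots, \mathbf{x}_N^{k+1}, \cdot, \cdot)$ respectively. Since $-\mathcal{L}$ is affine, hence convex, in $(\bm{\lambda}, \bm{\mu})$, Lemma~\ref{lem: Proximal Minimization} applies to both. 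I would apply it to the corrector with test point $(\bm{\lambda}^*, \bm{\mu}^*)$, obtaining the contraction $\lVert \bm{\lambda}^{k+1} - \bm{\lambda}^* \rVert_2^2 + \lVert \bm{\mu}^{k+1} - \bm{\mu}^* \rVert_2^2 \leq \lVert \bm{\lambda}^k - \bm{\lambda}^* \rVert_2^2 + \lVert \bm{\mu}^k - \bm{\mu}^* \rVert_2^2 - \lVert \bm{\lambda}^{k+1} - \bm{\lambda}^k \rVert_2^2 - \lVert \bm{\mu}^{k+1} - \bm{\mu}^k \rVert_2^2 + 2\rho[\mathcal{L}(\mathbf{x}_1^{k+1},\dots,\mathbf{x}_N^{k+1}, \bm{\lambda}^{k+1}, \bm{\mu}^{k+1}) - \mathcal{L}(\mathbf{x}_1^{k+1},\dots,\mathbf{x}_N^{k+1}, \bm{\lambda}^*, \bm{\mu}^*)]$, and then to the predictor, but crucially with test point $(\bm{\lambda}^{k+1}, \bm{\mu}^{k+1})$ rather than the saddle point.

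This second application supplies the squared terms $-\lVert \bm{\gamma}^{k+1} - \bm{\lambda}^{k+1} \rVert_2^2 - \lVert \bm{\nu}^{k+1} - \bm{\mu}^{k+1} \rVert_2^2 - \lVert \bm{\gamma}^{k+1} - \bm{\lambda}^k \rVert_2^2 - \lVert \bm{\nu}^{k+1} - \bm{\mu}^k \rVert_2^2$ and an upper bound on $\lVert \bm{\lambda}^{k+1} - \bm{\lambda}^k \rVert_2^2 + \lVert \bm{\mu}^{k+1} - \bm{\mu}^k \rVert_2^2$, which I would substitute into the corrector contraction to absorb those leftover positive squared-distance terms. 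Finally, because $\mathcal{L}$ is affine in its dual arguments, the two Lagrangian-difference terms expand into exactly the bilinear expressions $(\bm{\gamma}^{k+1} - \bm{\lambda}^{k+1})^T \sum_i A_i \mathbf{x}_i^k + \sum_j (\nu_j^{k+1} - \mu_j^{k+1}) \sum_i g_{ji}(\mathbf{x}_i^k)$ and $(\bm{\lambda}^{k+1} - \bm{\lambda}^*)^T \sum_i A_i \mathbf{x}_i^{k+1} + \sum_j (\mu_j^{k+1} - \mu_j^*) \sum_i g_{ji}(\mathbf{x}_i^{k+1})$, the $\mathbf{b}$-constants being reconciled through $\sum_i A_i \mathbf{x}_i^* = \mathbf{b}$, giving \eqref{eq: N-PCPM Dual Distance}. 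I expect the main obstacle to be precisely this coupling of the two proximal steps: one must realize that the predictor estimate has to be instantiated at the corrector iterate $(\bm{\lambda}^{k+1}, \bm{\mu}^{k+1})$, not at the saddle point, so that its $\lVert \bm{\lambda}^k - \bm{\lambda}^{k+1} \rVert_2^2$ and $\lVert \bm{\mu}^k - \bm{\mu}^{k+1} \rVert_2^2$ terms line up with and cancel the positive terms left over from the corrector step. The primal estimate, by contrast, is routine once the sign of the discarded $\mathbf{x}^*$-term is verified.
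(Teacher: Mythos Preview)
Your proposal is correct and follows essentially the same route as the paper: for \eqref{eq: N-PCPM Primal Distance} you apply Lemma~\ref{lem: Proximal Minimization} to the primal step and combine it with the saddle-point inequalities (your explicit discarding of $2\rho\sum_j \nu_j^{k+1}\sum_i g_{ji}(\mathbf{x}_i^*)\le 0$ is precisely the content of the dual-maximality half $\mathcal{L}(\mathbf{x}^*,\bm{\gamma}^{k+1},\bm{\nu}^{k+1})\le \mathcal{L}(\mathbf{x}^*,\bm{\lambda}^*,\bm{\mu}^*)$ that the paper invokes), and for \eqref{eq: N-PCPM Dual Distance} you apply Lemma~\ref{lem: Proximal Minimization} via Lemma~\ref{lem: Dual} to the corrector at $(\bm{\lambda}^*,\bm{\mu}^*)$ and to the predictor at $(\bm{\lambda}^{k+1},\bm{\mu}^{k+1})$, then add --- exactly the paper's argument, with the same crucial choice of test points.
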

\begin{proof}
The details of the proof are provided in Section~\ref{app: N-PCPM Proof_1}.
\end{proof}

\begin{theorem}[Global Convergence]\label{thm: N-PCPM global convergence}
Assume that Assumption~\ref{assp: N-PCPM Slater} to Assumption~\ref{assp: N-PCPM Saddle Point} hold. Given a scalar $0 < \epsilon < 1$, choose a step size $\rho$ satisfying
\begin{equation}\label{eq: step size}
0 < \rho \leq \min\Bigg\{\frac{1 - \epsilon}{A_{max} + M L_{max}}, \frac{1 - \epsilon}{N A_{max}}, \frac{1 - \epsilon}{N L_{max}}\Bigg\},
\end{equation}
where $A_{max} \coloneqq \max_{i=1}^N\{\lVert A_i \rVert_2\}$, and $L_{max} \coloneqq \max_{j=1}^M\big\{\max_{i=1}^N\{L_{ji}\}\big\}$. \\
Let $\{\mathbf{x}_1^k, \dots, \mathbf{x}_N^k, \bm{\lambda}^k, \bm{\mu}^k\}$ be the sequence generated by Algorithm~\ref{alg: N-PCPM}, with an arbitrary point $(\mathbf{x}_1^0, \dots, \mathbf{x}_N^0, \bm{\lambda}^0, \bm{\mu}^0)$; then the sequence converges globally to a saddle point $(\mathbf{x}_1^* \dots \mathbf{x}_N^*, \bm{\lambda}^*, \bm{\mu}^*)$ of the optimization problem \eqref{eq: GP again}.
\end{theorem}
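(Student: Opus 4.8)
The plan is to run a Fej\'er-monotonicity argument on top of Proposition~\ref{prop: distance}. I would first add the primal estimate \eqref{eq: N-PCPM Primal Distance} to the dual estimate \eqref{eq: N-PCPM Dual Distance} and track the aggregate squared distance
\[
V^k := \sum_{i=1}^N \lVert \mathbf{x}_i^k - \mathbf{x}_i^* \rVert_2^2 + \lVert \bm{\lambda}^k - \bm{\lambda}^* \rVert_2^2 + \lVert \bm{\mu}^k - \bm{\mu}^* \rVert_2^2 .
\]
The key bookkeeping observation is that the cross terms involving the saddle point $(\bm{\lambda}^*,\bm{\mu}^*)$ cancel upon summation: the two contributions carrying $\sum_i A_i \mathbf{x}_i^{k+1}$ combine into $(\bm{\lambda}^{k+1}-\bm{\gamma}^{k+1})^T\sum_i A_i \mathbf{x}_i^{k+1}$ and those carrying $\sum_i g_{ji}(\mathbf{x}_i^{k+1})$ combine into $\sum_j(\mu_j^{k+1}-\nu_j^{k+1})\sum_i g_{ji}(\mathbf{x}_i^{k+1})$, so that the only surviving cross term is
\[
2\rho (\bm{\gamma}^{k+1}-\bm{\lambda}^{k+1})^T \sum_i A_i (\mathbf{x}_i^k - \mathbf{x}_i^{k+1}) + 2\rho \sum_j (\nu_j^{k+1}-\mu_j^{k+1}) \sum_i \big(g_{ji}(\mathbf{x}_i^k) - g_{ji}(\mathbf{x}_i^{k+1})\big),
\]
expressed purely through consecutive differences.

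The technical heart, which I expect to be the main obstacle, is to absorb this residual into the negative quadratics already present on the right-hand side, namely $-\sum_i\lVert \mathbf{x}_i^{k+1}-\mathbf{x}_i^k\rVert_2^2$, $-\lVert \bm{\gamma}^{k+1}-\bm{\lambda}^{k+1}\rVert_2^2$ and $-\lVert \bm{\nu}^{k+1}-\bm{\mu}^{k+1}\rVert_2^2$. I would bound the linear part using $\lVert A_i\rVert_2 \le A_{max}$, Cauchy--Schwarz, and the blockwise Young inequality $2ab\le a^2+b^2$, which yields $\rho N A_{max}\lVert \bm{\gamma}^{k+1}-\bm{\lambda}^{k+1}\rVert_2^2 + \rho A_{max}\sum_i\lVert \mathbf{x}_i^{k+1}-\mathbf{x}_i^k\rVert_2^2$; and the nonlinear part using the Lipschitz continuity assumption, $\lvert g_{ji}(\mathbf{x}_i^k)-g_{ji}(\mathbf{x}_i^{k+1})\rvert \le L_{max}\lVert \mathbf{x}_i^{k+1}-\mathbf{x}_i^k\rVert_2$, and the same Young step summed over the $M\times N$ index pairs, which yields $\rho N L_{max}\lVert \bm{\nu}^{k+1}-\bm{\mu}^{k+1}\rVert_2^2 + \rho M L_{max}\sum_i\lVert \mathbf{x}_i^{k+1}-\mathbf{x}_i^k\rVert_2^2$. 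Collecting coefficients, the net multiplier on $\sum_i\lVert \mathbf{x}_i^{k+1}-\mathbf{x}_i^k\rVert_2^2$ becomes $-(1-\rho(A_{max}+ML_{max}))$, on $\lVert \bm{\gamma}^{k+1}-\bm{\lambda}^{k+1}\rVert_2^2$ becomes $-(1-\rho N A_{max})$, and on $\lVert \bm{\nu}^{k+1}-\bm{\mu}^{k+1}\rVert_2^2$ becomes $-(1-\rho N L_{max})$ --- precisely the three quantities that the step-size rule \eqref{eq: step size} forces to be at least $\epsilon$. The delicate point is to choose the Young weights (weight one in each split) so these coefficients line up exactly with the three terms in the minimum defining $\rho$; a careless split couples the estimates and would demand a smaller step size than \eqref{eq: step size} permits.

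With this calibration I obtain the descent inequality
\[
V^{k+1} \le V^k - \epsilon\Big(\sum_i\lVert \mathbf{x}_i^{k+1}-\mathbf{x}_i^k\rVert_2^2 + \lVert \bm{\gamma}^{k+1}-\bm{\lambda}^{k+1}\rVert_2^2 + \lVert \bm{\nu}^{k+1}-\bm{\mu}^{k+1}\rVert_2^2\Big) - \lVert \bm{\gamma}^{k+1}-\bm{\lambda}^k\rVert_2^2 - \lVert \bm{\nu}^{k+1}-\bm{\mu}^k\rVert_2^2 .
\]
Hence $\{V^k\}$ is nonincreasing and bounded below, so it converges and the iterate sequence is bounded. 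Summing over $k$ and using $V^k\ge 0$ shows the bracketed increments are summable, so $\mathbf{x}_i^{k+1}-\mathbf{x}_i^k\to \mathbf 0$, $\bm{\gamma}^{k+1}-\bm{\lambda}^{k+1}\to\mathbf 0$, $\bm{\nu}^{k+1}-\bm{\mu}^{k+1}\to\mathbf 0$, $\bm{\gamma}^{k+1}-\bm{\lambda}^k\to\mathbf 0$ and $\bm{\nu}^{k+1}-\bm{\mu}^k\to\mathbf 0$; in particular $\bm{\lambda}^{k+1}-\bm{\lambda}^k\to\mathbf 0$ and $\bm{\mu}^{k+1}-\bm{\mu}^k\to\mathbf 0$.

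Finally I would pass to a limit. By boundedness, extract a subsequence $(\mathbf{x}^{k_l},\bm{\lambda}^{k_l},\bm{\mu}^{k_l})\to(\mathbf{x}^\infty,\bm{\lambda}^\infty,\bm{\mu}^\infty)$; since all consecutive differences vanish, the index-$(k_l+1)$ iterates and the predictors share the same limit. Taking limits in $\bm{\gamma}^{k+1}-\bm{\lambda}^k=\rho(\sum_i A_i\mathbf{x}_i^k-\mathbf b)$ gives primal feasibility $\sum_i A_i\mathbf{x}_i^\infty=\mathbf b$; taking limits in the projection identity of \eqref{eq: N-block PCPM predictor update} together with $\bm{\nu}^{k+1}-\bm{\mu}^k\to\mathbf 0$ gives the fixed-point relation $\mu_j^\infty=\Pi_{\mathbb{R}_+}[\mu_j^\infty+\rho\sum_i g_{ji}(\mathbf{x}_i^\infty)]$, which is equivalent to $\mu_j^\infty\ge 0$, $\sum_i g_{ji}(\mathbf{x}_i^\infty)\le 0$ and complementary slackness; and taking limits in the first-order optimality conditions of \eqref{eq: N-block PCPM primal minimization}, where $\mathbf{x}_i^{k+1}-\mathbf{x}_i^k\to\mathbf 0$ annihilates the proximal gradient, shows each $\mathbf{x}_i^\infty$ minimizes $f_i(\cdot)+(\bm{\lambda}^\infty)^TA_i(\cdot)+\sum_j\mu_j^\infty g_{ji}(\cdot)$ over $\mathcal X_i$. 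These are exactly the saddle-point conditions of Assumption~\ref{assp: N-PCPM Saddle Point}. To upgrade subsequential convergence to global convergence, I would apply the standard Opial argument: the Fej\'er estimate holds for every saddle point, so it holds with $(\mathbf{x}^\infty,\bm{\lambda}^\infty,\bm{\mu}^\infty)$ as reference, making the corresponding $V^k$ nonincreasing with a subsequence tending to $0$; therefore the whole sequence $V^k\to 0$ and the entire iterate sequence converges to $(\mathbf{x}^\infty,\bm{\lambda}^\infty,\bm{\mu}^\infty)$.
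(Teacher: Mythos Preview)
Your proposal is correct and follows essentially the same route as the paper: add the two estimates of Proposition~\ref{prop: distance}, absorb the residual cross terms via Cauchy--Schwarz/Young with the weights that reproduce exactly the three coefficients in the step-size rule~\eqref{eq: step size}, obtain the Fej\'er descent inequality, deduce summability of the increments, and finish with the Opial argument. The only cosmetic difference is that the paper verifies the cluster point is a saddle point by re-applying Lemma~\ref{lem: Proximal Minimization} at an arbitrary test point and passing to the limit in the resulting variational inequality, whereas you pass to the limit in the first-order optimality conditions of the updates; both are standard and equivalent here.
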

\begin{proof}
Please see Section~\ref{app: N-PCPM Proof_2} for details.
\end{proof}
\par To establish convergence rate, we need to make an additional assumption on Problem \eqref{eq: GP again}, as follows.
\begin{assumption}[Lipschitz Inverse Mapping]\label{assp: lip_inverse_mapping}
Assume that there exists a unique saddle point $(\mathbf{x}_1^*, \dots, \mathbf{x}_N^*, \bm{\lambda}^*, \bm{\mu}^*)$ such that, given an inverse mapping $\mathcal{S}^{-1}: \mathbb{R}^n \times \mathbb{R}^m \times \mathbb{R}^M \to \mathbb{R}$:
\begin{equation}
\begin{aligned}
&\mathcal{S}^{-1}(\mathbf{u}_1, \dots, \mathbf{u}_N, \mathbf{v}) \\
= &\arg \underset{(\mathbf{x}_1, \dots, \mathbf{x}_N) \in \prod_{i=1}^N \mathcal{X}_i}{\min} \quad \underset{\bm{\lambda} \in \mathbb{R}^m, \bm{\mu} \in \mathbb{R}_+^M}{\max} \Bigg\{\mathcal{L}(\mathbf{x}_1, \dots, \mathbf{x}_N, \bm{\lambda}, \bm{\mu}) - \sum_{i=1}^N \mathbf{x}_i^T \mathbf{u}_i + \bm{\lambda}^T \mathbf{v} + \bm{\mu}^T \mathbf{w}\Bigg\},
\end{aligned}
\end{equation}
and a fixed positive real number $\tau > 0$, we have 
$$
\sum_{i=1}^N \lVert \mathbf{x}_i - \mathbf{x}_i^* \rVert_2^2 + \lVert \bm{\lambda} - \bm{\lambda}^* \rVert_2^2 + \lVert \bm{\mu} - \bm{\mu}^* \rVert_2^2 \leq a^2 \Big(\sum_{i=1}^N \lVert \mathbf{u}_i \rVert_2^2 + \lVert \mathbf{v} \rVert_2^2 + \lVert \mathbf{w} \rVert_2^2\Big)
$$
for some $a \geq 0$, whenever the point $(\mathbf{x}_1, \dots, \mathbf{x}_N, \bm{\lambda}, \bm{\mu}) \in \mathcal{S}^{-1}(\mathbf{u}_1, \dots, \mathbf{u}_N, \mathbf{v}, \mathbf{w})$ and 
$$
\sum_{i=1}^N \lVert \mathbf{u}_i \rVert_2^2 + \lVert \mathbf{v} \rVert_2^2 + \lVert \mathbf{w} \rVert_2^2 \leq \tau^2.
$$
\end{assumption}
The above assumption states that the inverse mapping $\mathcal{S}^{-1}$ is Lipschitz continuous at the origin with modulus $a$. By Proposition~2 in \cite{rockafellar1976augmented}, to obtain a plausible condition for the Lipschitz continuity of $\mathcal{S}^{-1}$ at the origin, we appeal to the \textit{strong second-order conditions for optimality} which are comprised of the following properties:
\begin{enumerate}[label=(\roman*)]
\item There is a saddle point $(\mathbf{x}_1^*, \dots, \mathbf{x}_N^*, \bm{\lambda}^*, \bm{\mu}^*)$ of the optimization problem \eqref{eq: GP again} such that $\mathbf{x}_i^* \in int(\mathcal{X}_i)$ for all $i = 1 \dots N$, where $int(\mathcal{X}_i)$ denotes the interior of the convex set $\mathcal{X}_i$. Moreover, for all $i = 1 \dots N$ and $j = 1, \dots, M$, function $g_{ji}(\mathbf{x}_i)$ is twice continuously differentiable on a neighborhood of $\mathbf{x}_i^*$.
\item Let $\mathcal{J}$ denote the set of active constraint indices at the point of $(\mathbf{x}_1^*, \dots, \mathbf{x}_N^*)$: $\mathcal{J} = \Big\{j = 1, \dots, M \Big| \sum_{i=1}^N g_{ji}(\mathbf{x}_i^*) = 0\Big\}$. Then $\mu_j^* > 0$ for all $j \in \mathcal{J}$, and $\Big\{\sum_{i=1}^N \nabla_{\mathbf{x}_i} g_{ji}(\mathbf{x}_i^*)\Big\}_{j \in \mathcal{J}} \cup \Big\{\sum_{i=1}^N A_i \mathbf{x}_i^*\Big\}$ forms a linearly independent set.
\item The Hessian matrix $H \coloneqq \nabla_{(\mathbf{x}_1, \dots, \mathbf{x}_N)}^2 \mathcal{L}(\mathbf{x}_1^*, \dots, \mathbf{x}_N^*, \bm{\lambda}^*, \bm{\mu}^*)$ satisfies $\mathbf{y}^T H \mathbf{y} > 0$ for any
$$
\mathbf{y} \coloneqq \left(\begin{array}{c}\mathbf{y}_1 \\[-6pt] \vdots \\[-8pt] \mathbf{y}_N\end{array}\right) \in \Bigg\{\mathbf{y} \not= \mathbf{0} \Bigg| \begin{array}{l} \sum_{i=1}^N A_i \mathbf{y}_i = \mathbf{0} \\ \sum_{i=1}^N \mathbf{y}_i^T \big[\nabla_{\mathbf{x}_i} g_{ji}(\mathbf{x}_i^*)\big] = 0, \quad \forall j \in \mathcal{J}\end{array}\Bigg\}.
$$
\end{enumerate}
We now establish the linear convergence rate of Algorithm~\ref{alg: N-PCPM}.
\begin{theorem}[Linear Convergence Rate]\label{thm: N-PCPM linear convergence}
Assume that Assumption~\ref{assp: N-PCPM Slater} to Assumption~\ref{assp: lip_inverse_mapping} hold. Let $\rho$ satisfy \eqref{eq: step size} and let $\{\mathbf{x}_1^k, \dots, \mathbf{x}_N^k, \bm{\lambda}^k, \bm{\mu}^k\}$ be a sequence generated by Algorithm~\ref{alg: N-PCPM}, with an arbitrary starting point $(\mathbf{x}_1^0, \dots, \mathbf{x}_N^0, \bm{\lambda}^0, \bm{\mu}^0)$; then the sequence converges linearly to the unique saddle point $(\mathbf{x}_1^*, \dots, \mathbf{x}_N^*, \bm{\lambda}^*, \bm{\mu}^*)$. More specifically, there exists an integer $\bar{k}$ such that, for all $k \geq \bar{k}$, we have:
\begin{equation}
\begin{aligned}
&\sum_{i=1}^N \lVert \mathbf{x}_i^{k+1} - \mathbf{x}_i^* \rVert_2^2 + \lVert \bm{\lambda}^{k+1} - \bm{\lambda}^* \rVert_2^2 + \lVert \bm{\mu}^{k+1} - \bm{\mu}^* \rVert_2^2 \\
\leq &\theta^2 \Big(\sum_{i=1}^N \lVert \mathbf{x}_i^k - \mathbf{x}_i^* \rVert_2^2 + \lVert \bm{\lambda}^k - \bm{\lambda}^* \rVert_2^2 + \lVert \bm{\mu}^k - \bm{\mu}^* \rVert_2^2\Big),
\end{aligned}
\end{equation}
where $0 < \theta < 1$.
\end{theorem}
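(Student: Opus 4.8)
The plan is to recognize $N$-PCPM as a perturbed instance of the exact proximal-point iteration for the KKT (saddle-point) operator of \eqref{eq: GP again}, and then run the proximal-point linear-rate argument (cf. \cite{rockafellar1976monotone,rockafellar1976augmented}) driven by Assumption~\ref{assp: lip_inverse_mapping}. Throughout I would write $\mathbf{z}^k := (\mathbf{x}_1^k, \dots, \mathbf{x}_N^k, \bm{\lambda}^k, \bm{\mu}^k)$, $\mathbf{z}^* := (\mathbf{x}_1^*, \dots, \mathbf{x}_N^*, \bm{\lambda}^*, \bm{\mu}^*)$, with $\lVert \mathbf{z}^{k+1}-\mathbf{z}^k \rVert_2^2 := \sum_{i=1}^N \lVert \mathbf{x}_i^{k+1}-\mathbf{x}_i^k \rVert_2^2 + \lVert \bm{\lambda}^{k+1}-\bm{\lambda}^k \rVert_2^2 + \lVert \bm{\mu}^{k+1}-\bm{\mu}^k \rVert_2^2$ and $\lVert \mathbf{z}^{k+1}-\mathbf{z}^* \rVert_2^2$ defined analogously. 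First I would extract a contraction-style descent inequality from the proof of Theorem~\ref{thm: N-PCPM global convergence}: summing \eqref{eq: N-PCPM Primal Distance} and \eqref{eq: N-PCPM Dual Distance}, the $\bm{\lambda}^*$- and $\bm{\mu}^*$-dependent cross terms telescope into $2\rho\big[(\bm{\lambda}^{k+1}-\bm{\gamma}^{k+1})^T\sum_i A_i(\mathbf{x}_i^{k+1}-\mathbf{x}_i^k) + \sum_j (\mu_j^{k+1}-\nu_j^{k+1})\sum_i(g_{ji}(\mathbf{x}_i^{k+1})-g_{ji}(\mathbf{x}_i^k))\big]$, which Cauchy--Schwarz, the Lipschitz bounds, and the step-size choice \eqref{eq: step size} dominate by the negative quadratic terms, leaving
\[
\lVert \mathbf{z}^{k+1}-\mathbf{z}^* \rVert_2^2 \le \lVert \mathbf{z}^k-\mathbf{z}^* \rVert_2^2 - \delta\, D^{k+1},
\]
where $\delta>0$ (proportional to $\epsilon$) and $D^{k+1}$ is a positive combination of the five gap quantities $\sum_i\lVert\mathbf{x}_i^{k+1}-\mathbf{x}_i^k\rVert_2^2$, $\lVert\bm{\gamma}^{k+1}-\bm{\lambda}^{k+1}\rVert_2^2$, $\lVert\bm{\nu}^{k+1}-\bm{\mu}^{k+1}\rVert_2^2$, $\lVert\bm{\gamma}^{k+1}-\bm{\lambda}^k\rVert_2^2$, $\lVert\bm{\nu}^{k+1}-\bm{\mu}^k\rVert_2^2$.

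Next I would show $\mathbf{z}^{k+1}\in\mathcal{S}^{-1}(\mathbf{u}^{k+1},\mathbf{v}^{k+1},\mathbf{w}^{k+1})$ for a perturbation controlled by exactly these five gaps. Writing the stationarity condition of the primal step \eqref{eq: N-block PCPM primal minimization} and subtracting the perturbed primal stationarity evaluated at the corrector multipliers $(\bm{\lambda}^{k+1},\bm{\mu}^{k+1})$ gives $\mathbf{u}_i^{k+1}=A_i^T(\bm{\lambda}^{k+1}-\bm{\gamma}^{k+1})+\sum_j(\mu_j^{k+1}-\nu_j^{k+1})\bm{\xi}_{ji}^{k+1}-\tfrac1\rho(\mathbf{x}_i^{k+1}-\mathbf{x}_i^k)$ with $\bm{\xi}_{ji}^{k+1}\in\partial g_{ji}(\mathbf{x}_i^{k+1})$; the corrector \eqref{eq: N-block PCPM corrector update} then identifies $\mathbf{v}^{k+1}=-\tfrac1\rho(\bm{\lambda}^{k+1}-\bm{\lambda}^k)$, and reading the projection defining $\mu_j^{k+1}$ as a normal-cone inclusion yields $w_j^{k+1}=-\tfrac1\rho(\mu_j^{k+1}-\mu_j^k)$. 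Bounding subgradients by $L_{ji}$, using $\bm{\lambda}^{k+1}-\bm{\gamma}^{k+1}=\rho\sum_i A_i(\mathbf{x}_i^{k+1}-\mathbf{x}_i^k)$, the nonexpansiveness of $\Pi_{\mathbb{R}_+}$ with Lipschitz continuity to bound $\bm{\nu}^{k+1}-\bm{\mu}^{k+1}$, and splitting $\bm{\lambda}^{k+1}-\bm{\lambda}^k$ and $\bm{\mu}^{k+1}-\bm{\mu}^k$ into predictor and corrector pieces, I would obtain $\lVert\mathbf{u}^{k+1}\rVert_2^2+\lVert\mathbf{v}^{k+1}\rVert_2^2+\lVert\mathbf{w}^{k+1}\rVert_2^2\le C^2 E^{k+1}$, where $E^{k+1}$ is again a positive combination of the same five gaps and $C$ depends on $\rho, A_{max}, L_{max}, N, M$. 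Since the perturbed objective stays convex--concave, these first-order conditions are sufficient, so $\mathbf{z}^{k+1}$ is a genuine saddle point of the perturbed problem and lies in $\mathcal{S}^{-1}(\mathbf{u}^{k+1},\mathbf{v}^{k+1},\mathbf{w}^{k+1})$.

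Finally I would combine these with global convergence. By Theorem~\ref{thm: N-PCPM global convergence}, $\mathbf{z}^k\to\mathbf{z}^*$, hence $\lVert\mathbf{z}^{k+1}-\mathbf{z}^k\rVert_2\to0$ and all five gaps vanish, so there is an integer $\bar k$ with $\lVert\mathbf{u}^{k+1}\rVert_2^2+\lVert\mathbf{v}^{k+1}\rVert_2^2+\lVert\mathbf{w}^{k+1}\rVert_2^2\le\tau^2$ for $k\ge\bar k$; local uniqueness from the strong second-order conditions ensures $\mathbf{z}^{k+1}$ is the solution selected near $\mathbf{z}^*$. Assumption~\ref{assp: lip_inverse_mapping} then gives $\lVert\mathbf{z}^{k+1}-\mathbf{z}^*\rVert_2^2\le a^2 C^2 E^{k+1}$. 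Because $D^{k+1}$ and $E^{k+1}$ are comparable combinations of the same five gaps, $D^{k+1}\ge c_0 E^{k+1}$ for some $c_0>0$, and substituting into the descent inequality yields $\big(1+\tfrac{\delta c_0}{a^2C^2}\big)\lVert\mathbf{z}^{k+1}-\mathbf{z}^*\rVert_2^2\le\lVert\mathbf{z}^k-\mathbf{z}^*\rVert_2^2$, i.e. the claim with $\theta^2=\tfrac{a^2C^2}{a^2C^2+\delta c_0}\in(0,1)$.

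I expect the main obstacle to be the second step: correctly reading off $(\mathbf{u}^{k+1},\mathbf{v}^{k+1},\mathbf{w}^{k+1})$ from the predictor, primal-minimization, and corrector optimality conditions --- in particular handling the projection defining $\bm{\mu}^{k+1}$ as a normal-cone inclusion and the subgradient terms of the nonlinear $g_{ji}$ --- and verifying that the perturbation is bounded by \emph{exactly} the gap quantities that the descent inequality controls, so that the comparison $D^{k+1}\ge c_0 E^{k+1}$ is legitimate. The eventual entry into the $\tau$-ball, which makes Assumption~\ref{assp: lip_inverse_mapping} applicable, is precisely where the global convergence result and the step-size slack $\epsilon$ (guaranteeing $\delta>0$) are indispensable.
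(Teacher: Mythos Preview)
Your proposal is correct and follows essentially the same route as the paper's proof: you identify the descent inequality from the global-convergence argument, exhibit the iterate $\mathbf{z}^{k+1}$ as an element of $\mathcal{S}^{-1}(\mathbf{u}^{k+1},\mathbf{v}^{k+1},\mathbf{w}^{k+1})$ with the same explicit perturbation $\mathbf{u}_i = A_i^T(\bm{\lambda}^{k+1}-\bm{\gamma}^{k+1})+\sum_j(\mu_j^{k+1}-\nu_j^{k+1})\bm{\xi}_{ji}-\tfrac{1}{\rho}(\mathbf{x}_i^{k+1}-\mathbf{x}_i^k)$, $\mathbf{v}=-\tfrac{1}{\rho}(\bm{\lambda}^{k+1}-\bm{\lambda}^k)$, $\mathbf{w}=-\tfrac{1}{\rho}(\bm{\mu}^{k+1}-\bm{\mu}^k)$, bound this perturbation by the five gap quantities, invoke global convergence to enter the $\tau$-ball, and then combine Assumption~\ref{assp: lip_inverse_mapping} with the descent inequality to extract the contraction factor. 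The paper carries out exactly these steps, computing the constants explicitly (obtaining $\theta^2 = 1/(1+\beta)$ with $\beta = \epsilon/\big(a^2(N\alpha^2+\tfrac{1}{\rho^2})\big)$, $\alpha=\max\{A_{max},L_{max}\}$) rather than working with your abstractions $D^{k+1}$, $E^{k+1}$, $c_0$, $C$; your formula $\theta^2=a^2C^2/(a^2C^2+\delta c_0)$ is structurally the same.
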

\begin{proof}
Please see Section~\ref{app: N-PCPM Proof_3} for details.
\end{proof}
\subsection{Numerical Experiments}
Consider the following $20$-dimensioned, block-separable convex optimization problem with $17$ nonlinear coupling constraints, modeling a decentralized planning of an economic system and suggested by \cite{asaadi1973computational}:
\begingroup
\allowdisplaybreaks
\begin{align}
\underset{x_1, \dots, x_{20}}{\text{minimize}} \quad &x_1^2 + x_2^2 + x_1 x_2 - 14 x_1 - 16 x_2 + (x_3 - 10)^2 + 4 (x_4 - 5)^2 + (x_5 - 3)^2 \nonumber \\
+ &2 (x_6 - 1)^2 + 5 x_7^2 + 7 (x_8 - 11)^2 + 2 (x_9 - 10)^2 + (x_{10} - 7)^2 + (x_{11} - 9)^2 \nonumber \\
+ &10 (x_{12} - 1)^2 + 5 (x_{13} - 7)^2 + 4 (x_{14} - 14)^2 + 27 (x_{15} - 1)^2 + x_{16}^4 \nonumber \\
+ &(x_{17} - 2)^2 + 13 (x_{18} - 2)^2 + (x_{19} - 3)^2 + x_{20}^2 + 95 \nonumber \\
\text{subject to} \quad &3 (x_1 - 2)^2 + 4 (x_2 - 3)^2 + 2 x_3^2 - 7 x_4  - 120 \leq  0 \nonumber \\
&5 x_1^2 + 8 x_2 + (x_3 - 6)^2 - 2 x_4 - 40 \leq 0 \nonumber \\
&\frac{1}{2} (x_1 - 8)^2 + 2 (x_2 - 4)^2 + 3x_5^2 - x_6 - 30 \leq 0 \nonumber \\
&x_1^2 + 2 (x_2 - 2)^2 - 2 x_1 x_2 + 14 x_5 - 6 x_6 \leq 0 \nonumber \\
&4 x_1 + 5 x_2 - 3 x_7 + 9 x_8  - 105 \leq 0 \nonumber \\
&10 x_1 - 8 x_2 - 17 x_7 + 2 x_8 \leq 0 \nonumber \\
&3 x_1 + 6 x_2 + 12 (x_9 - 8)^2 - 7 x_{10} \leq 0 \nonumber \\
&-8 x_1 + 2 x_2 + 5 x_9 - 2 x_{10} - 12 \leq 0 \nonumber \\
&x_1 + x_2 + 4 x_{11} - 21 x_{12} \leq 0 \nonumber \\
&x_1^2 + 15 x_{11} - 8 x_{12} - 28 \leq 0 \nonumber \\
&4 x_1 + 9 x_2 + 5 x_{13}^2 - 9 x_{14} - 87 \leq 0 \nonumber \\
&3 x_1 + 4 x_2 + 3 (x_{13} - 6)^2 - 14 x_{14} - 10 \leq 0 \nonumber \\
&14 x_1^2 + 35 x_{15} - 79 x_{16} - 92 \leq 0 \nonumber \\
&15 x_2^2 + 11 x_{15} - 61 x_{16} - 54 \leq 0 \nonumber \\
&5 x_1^2 + 2 x_2 + 9 x_{17}^4 - x_{18} - 68 \leq 0 \nonumber \\
&x_1^2 - x_2 + 19 x_{19} - 20 x_{20} + 19 \leq 0 \nonumber \\
&7 x_1^2 + 5 x_2^2 + x_{19}^2 - 30 x_{20} \leq 0 \label{eq: mid-sized P}
\end{align}
\endgroup
A minimum function value of $133.723$ can be obtained at the point of ($2.18$, $2.35$, $8.77$, $5.07$, $0.99$, $1.43$, $1.33$, $9.84$, $8.29$, $8.37$, $2.28$, $1.36$, $6.08$, $14.17$, $1.00$, $0.66$, $1.47$, $2.00$, $1.05$, $2.06$).\footnote{The solution is obtained using the nonlinear constrained optimization solver \textbf{filter} of Neos Solver at \href{https://neos-server.org/neos/solvers/}{https://neos-server.org/neos/solvers/}.} Applying Algorithm~\ref{alg: N-PCPM} and decomposing the original problem into $19$ small sub-problems, we achieve the following convergence results, presented in Figure~\ref{fig: test 1}. 
\begin{figure}[ht]
\centering
\begin{subfigure}{.36\textwidth}
\centering
\includegraphics[width=1.0\linewidth]{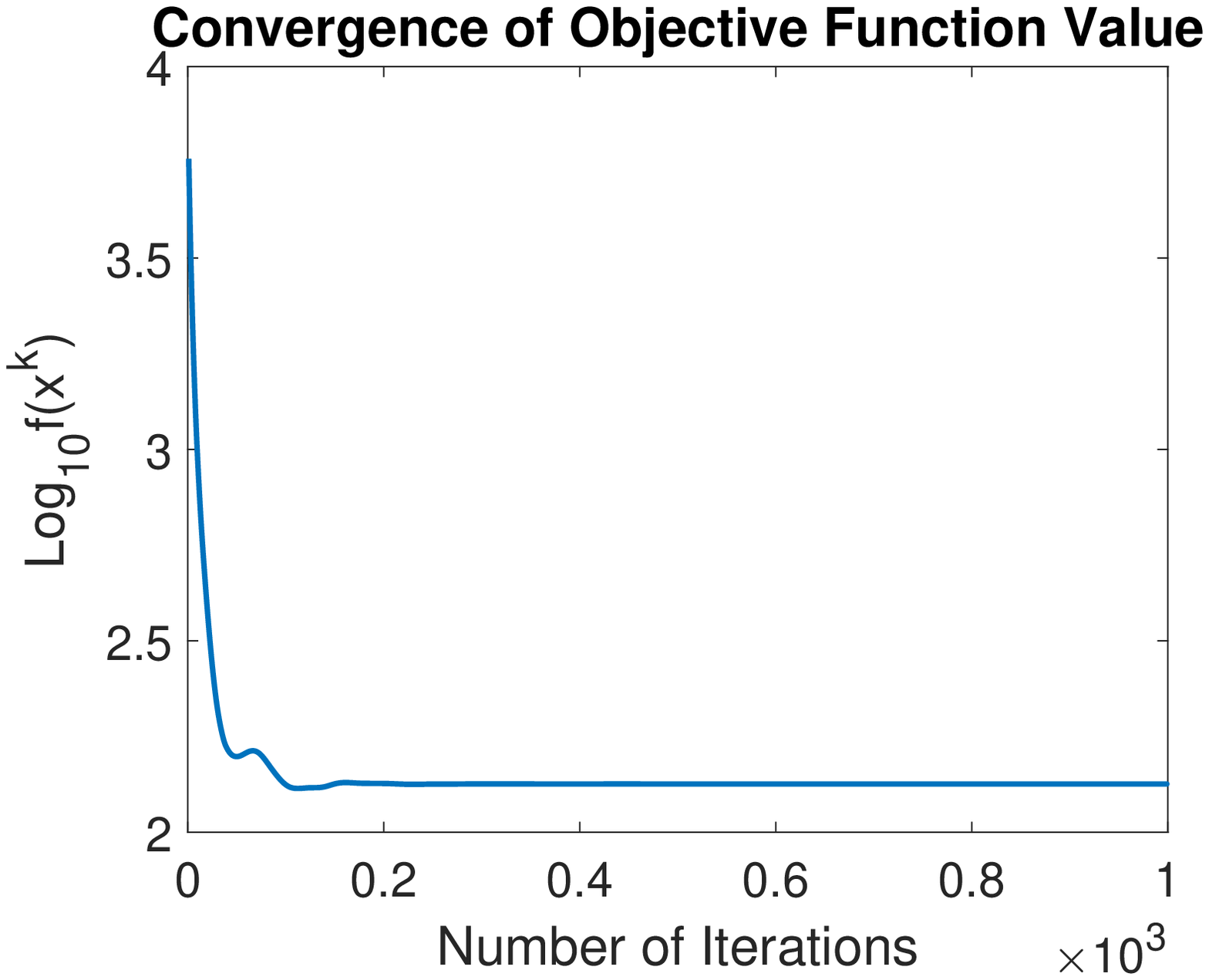}
\end{subfigure}%
\begin{subfigure}{.36\textwidth}
\centering
\includegraphics[width=1.0\linewidth]{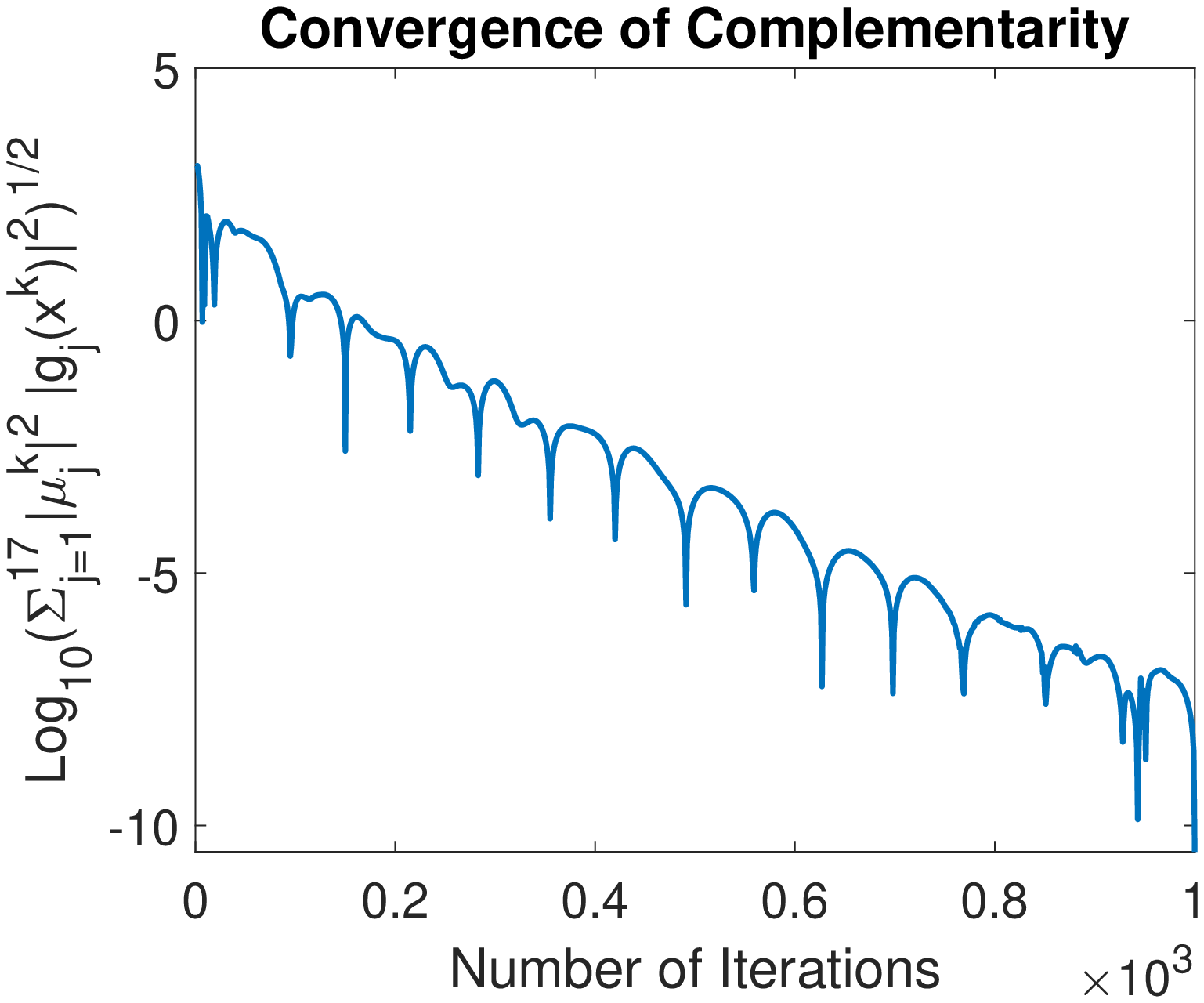}
\end{subfigure}
\begin{subfigure}{.36\textwidth}
\centering
\includegraphics[width=1.0\linewidth]{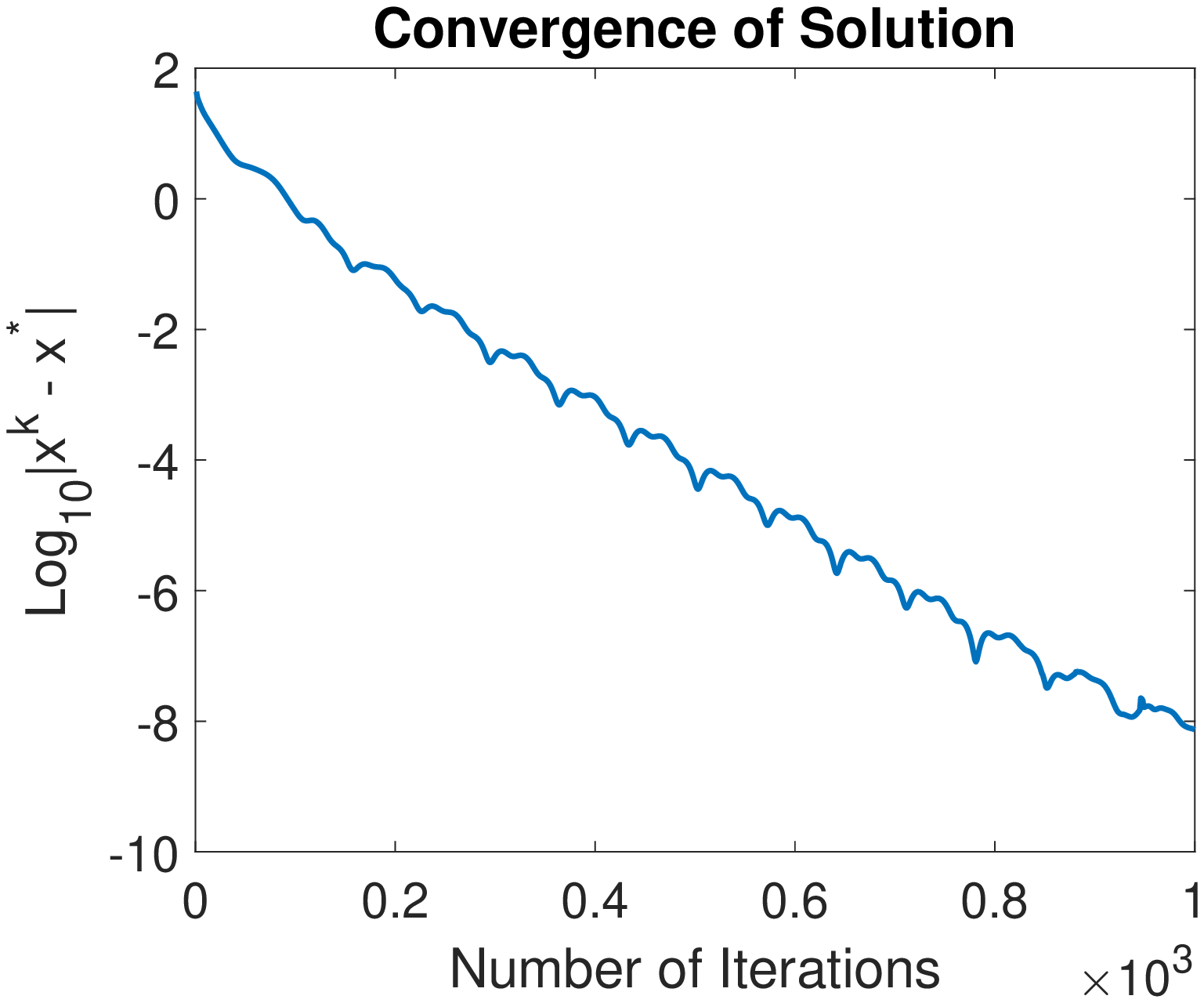}
\end{subfigure}%
\begin{subfigure}{.36\textwidth}
\centering
\includegraphics[width=1.0\linewidth]{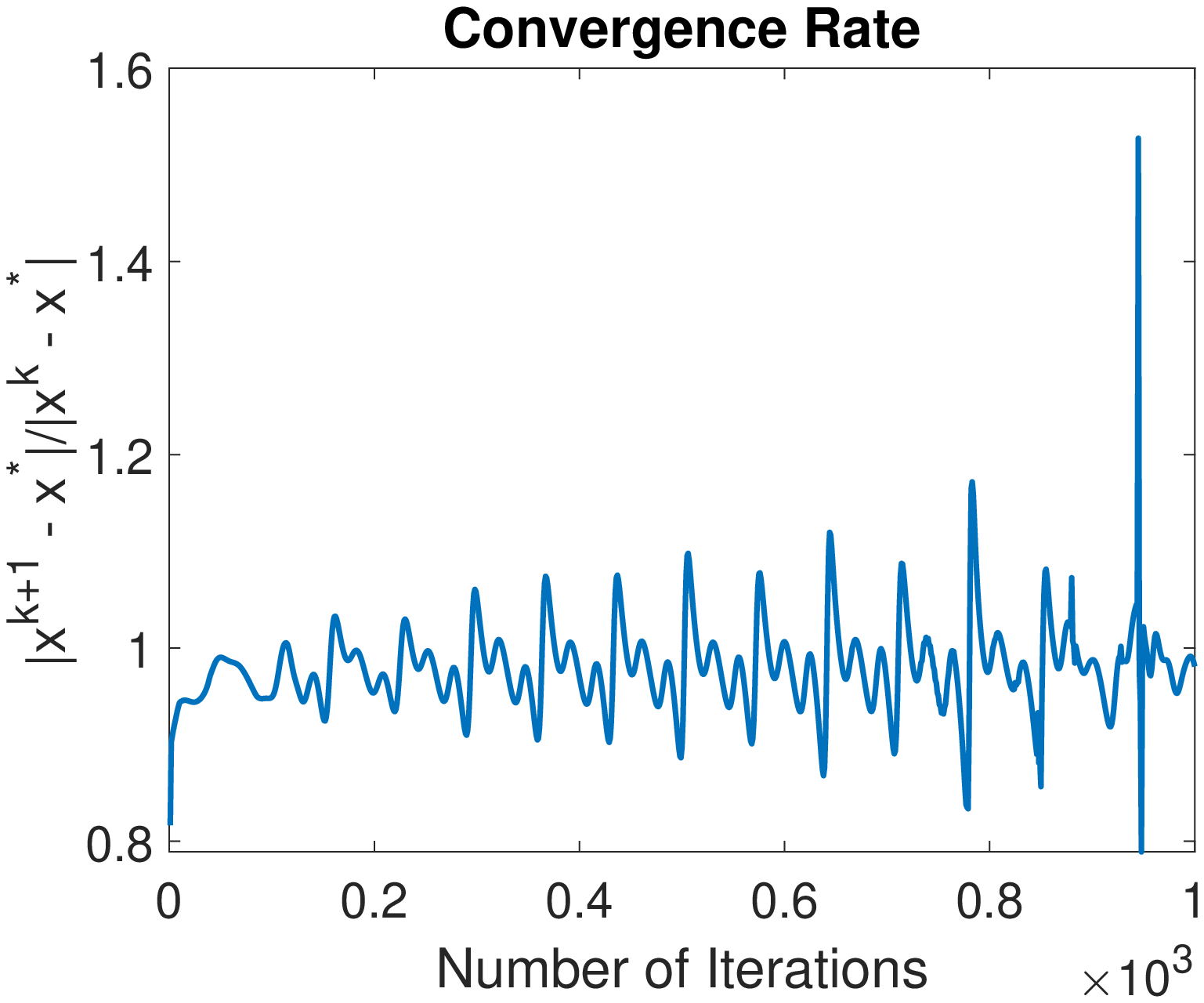}
\end{subfigure}
\caption{Convergence Results of Applying Algorithm~\ref{alg: N-PCPM} to Problem \eqref{eq: mid-sized P} with $\rho = 0.009$.}
\label{fig: test 1}
\end{figure}
\par Next, by replacing the term of $x_{16}^4$ in the objective function of \eqref{eq: mid-sized P} with $x_{16}^2$ and the term of $x_{17}^4$ in the $15$-th constraint with $x_{17}^2$, we make the modified problem satisfy Assumption~\ref{assp: lip_inverse_mapping}. A minimum function value of $133.687$ can be obtained at the point of ($2.18$, $2.34$, $8.76$, $5.07$, $0.99$, $1.43$, $1.34$, $9.84$, $8.30$, $8.36$, $2.27$, $1.36$, $6.08$, $14.17$, $1.00$, $0.64$, $2.00$, $2.00$, $1.04$, $2.06$). An additional linear convergence rate of applying Algorithm~\ref{alg: N-PCPM} is observed in the following convergence results, presented in Figure~\ref{fig: test 2}.
\begin{figure}[ht]
\centering
\begin{subfigure}{.36\textwidth}
\centering
\includegraphics[width=1.0\linewidth]{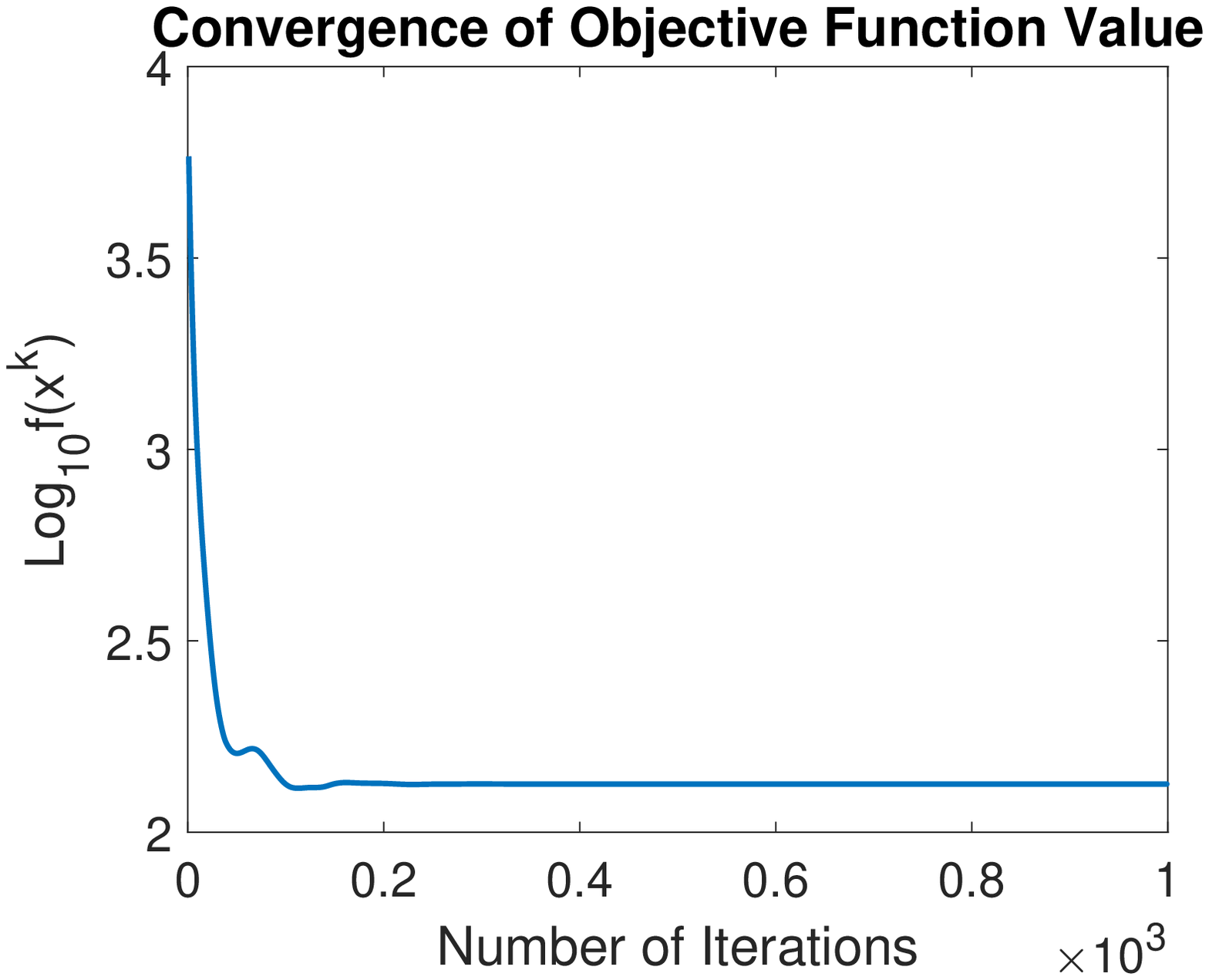}
\end{subfigure}%
\begin{subfigure}{.36\textwidth}
\centering
\includegraphics[width=1.0\linewidth]{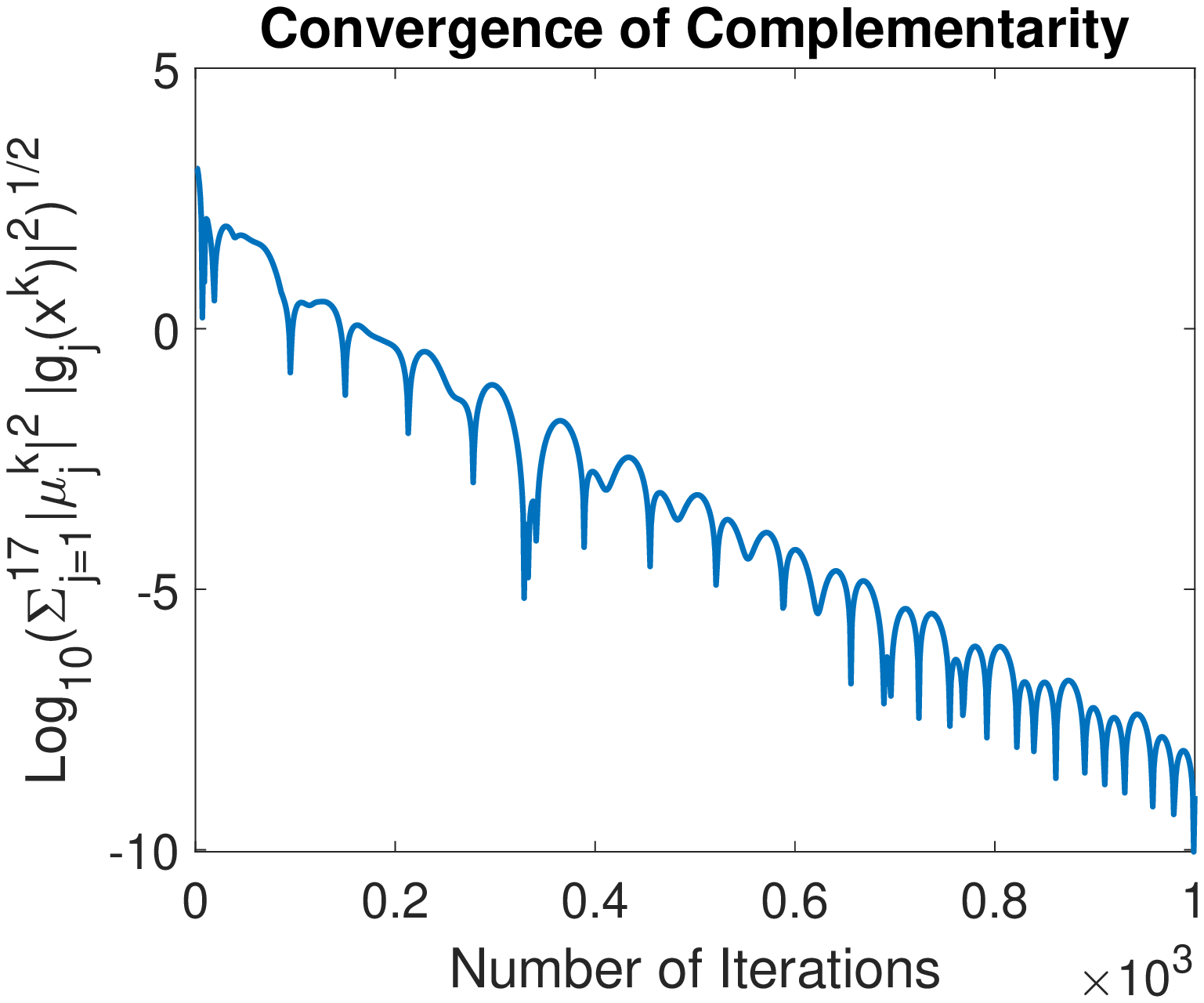}
\end{subfigure}
\begin{subfigure}{.36\textwidth}
\centering
\includegraphics[width=1.0\linewidth]{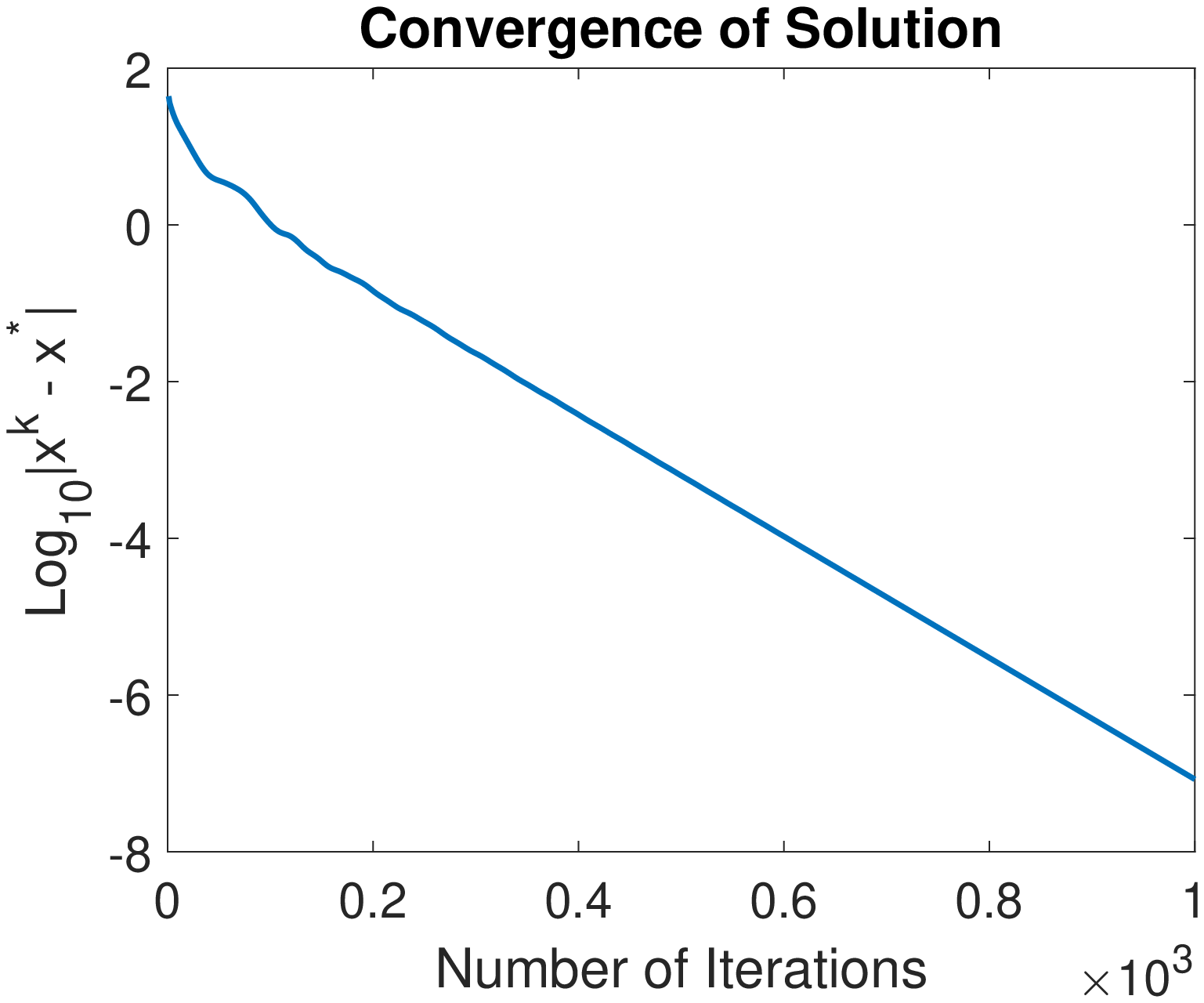}
\end{subfigure}%
\begin{subfigure}{.36\textwidth}
\centering
\includegraphics[width=1.0\linewidth]{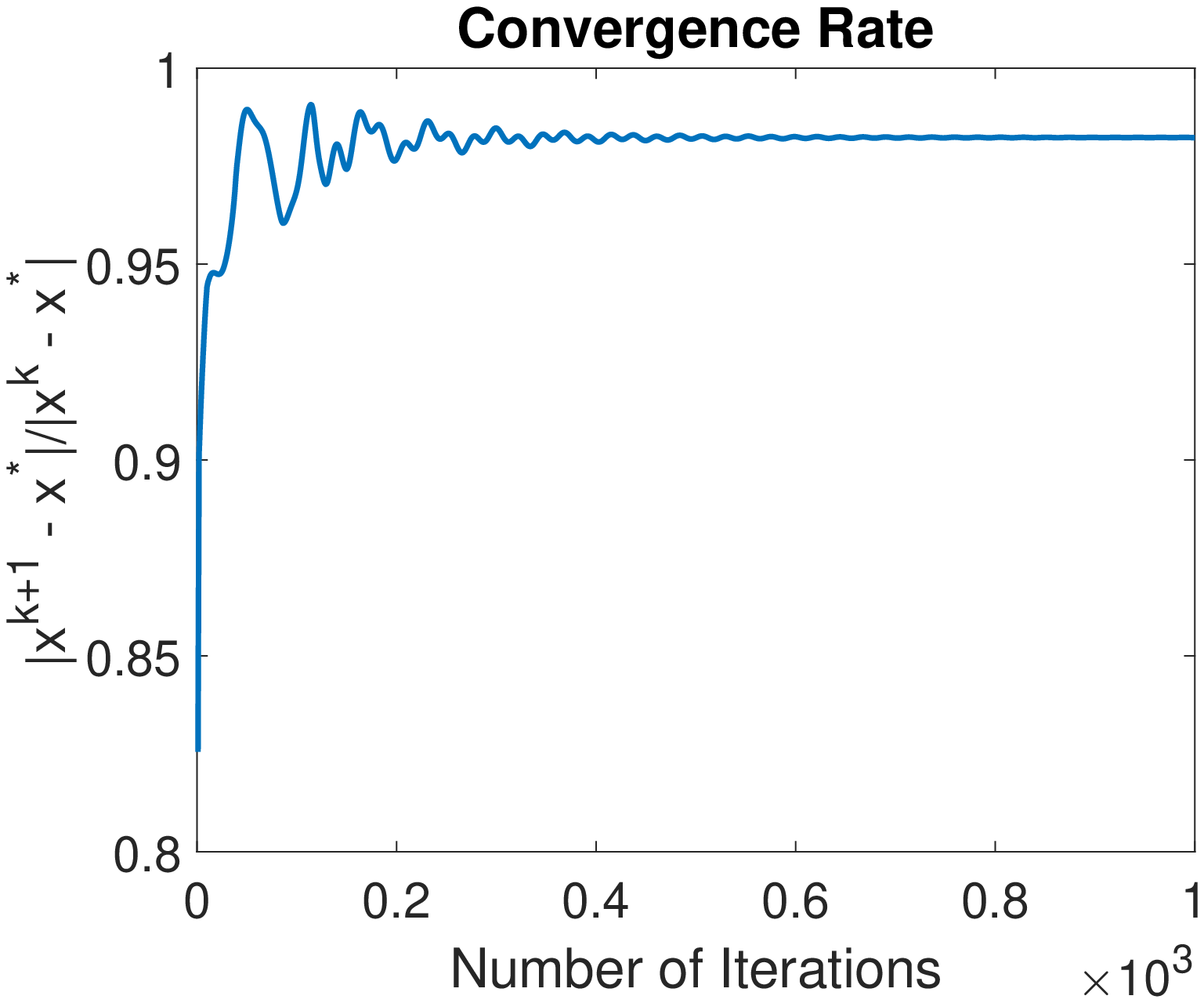}
\end{subfigure}
\caption{Convergence Results of Applying Algorithm~\ref{alg: N-PCPM} to the Modified Problem \eqref{eq: mid-sized P} with $\rho = 0.009$.}
\label{fig: test 2}
\end{figure}
\section{Extending the N-block PCPM Algorithm to an Asynchronous Scheme}\label{sec: AN-PCPM}
As a starting point, we first consider the following N-block convex optimization problem with only linear coupling constraints:
\begin{equation}\label{eq: N-LCP}
\begin{aligned}
\underset{\mathbf{x}_1, \dots, \mathbf{x}_N}{\text{minimize}} \quad &\sum_{i=1}^N f_i(\mathbf{x}_i) \\
\text{subject to} \quad &\mathbf{x}_i \in \mathcal{X}_i, \quad i = 1 \dots N, \\
&\sum_{i=1}^N A_i \mathbf{x}_i = \mathbf{b}. \\
\end{aligned}
\end{equation}
The decision variables, the objective function and the constraints are the same as in the optimization problem \eqref{eq: GP}. When applying Algorithm~\ref{alg: N-PCPM} to solving the above problem, each iteration can be interpreted as main-worker paradigm \cite{sahni1996master}, shown in Figure~\ref{fig: master_slave}.
\begin{figure}[ht]
\begin{center}\includegraphics[scale = 0.24]{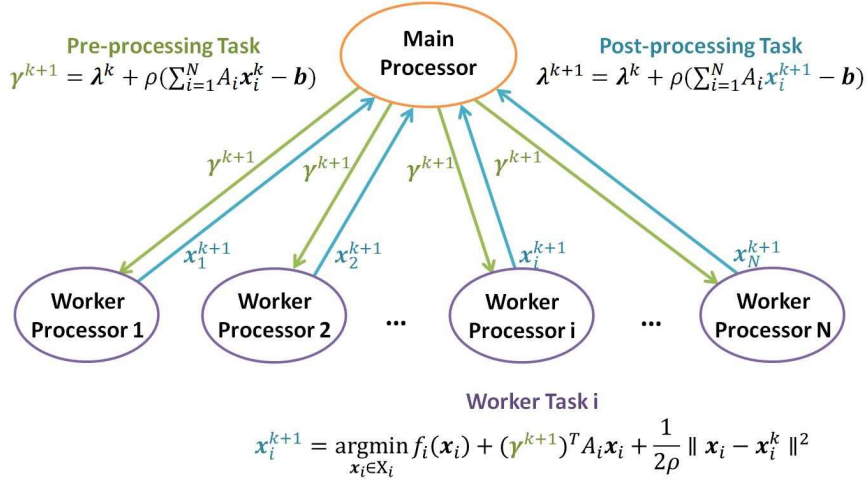}
\caption{Main-Worker Paradigm for Algorithm~\ref{alg: N-PCPM}.}
\label{fig: master_slave}
\end{center}
\end{figure}
At each iteration $k$, a predictor update of $\bm{\gamma}^{k+1}$ is first performed on a \textit{main processor} and is broadcast to each \textit{worker processor}, which is called a \textit{pre-processing task}. Upon receiving the updated predictor variable from the main processor, each worker processor solves the decomposed sub-problem in parallel and send its updated primal decision variable $\mathbf{x}_i^{k+1}$ back to the main processor, which is called a \textit{worker task}. After gathering all updated decision variables, a corrector update is then performed on the main processor, which is called a \textit{post-processing task}.
\par The speed of the algorithm is significantly limited by the slowest worker processor, since the post-processing task can not start until all worker tasks are finished and the results are sent back to the main processor. For large-scale problems, with the number of worker processors increasing, the issue of node synchronization can be a major concern for the performance of synchronous distributed algorithms. While in an asynchronous scheme, the main processor can proceed with only part of worker tasks finished. Figure~\ref{fig: syn_asyn}
\begin{figure}[ht]
\begin{center}
\includegraphics[scale = 0.3]{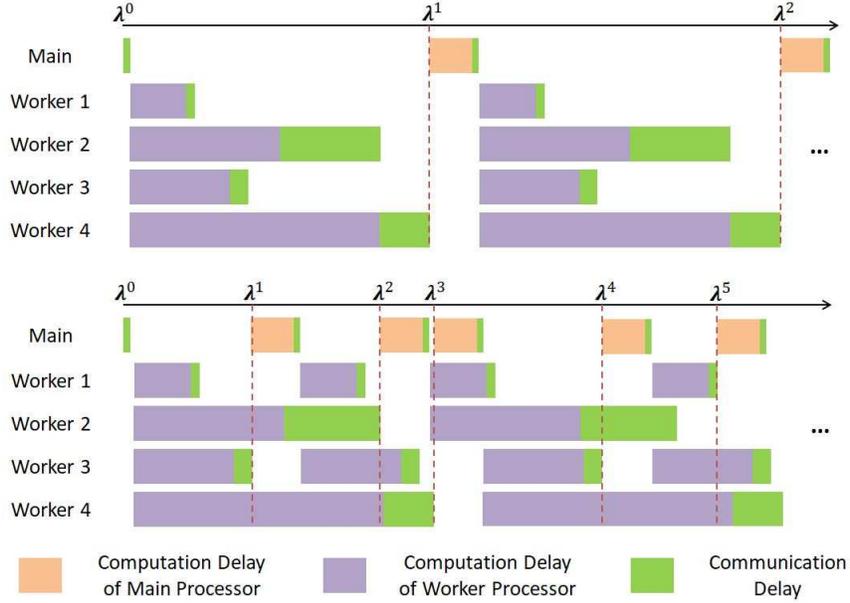}
\caption{Illustration of how asynchronous scheme iterates faster than synchronous scheme.}
\label{fig: syn_asyn}
\end{center}
\end{figure}
shows an example of $1$ main processor and $4$ worker processors with different lengths of computation and communication delays. In the asynchronous scheme, the main processor starts a new iteration whenever receives the results from at least $2$ worker processors, which leads to much faster iterations than the synchronous scheme. 
\par In this section, we extend the $N$-block PCPM algorithm to an asynchronous scheme to solve the linearly constrained $N$-block convex optimization problem \eqref{eq: N-LCP}. 
\subsection{Asynchronous N-block PCPM Algorithm for Convex Optimization Problems with Linear Coupling Constraints}
To achieve the convergence of the asynchronous $N$-block PCPM algorithm, similar to \cite{chang2016asynchronous1,chang2016asynchronous2}, we require that the asynchronous delay of each parallel worker processor is bounded. Let $k \geq 0$ denote the iteration index on the main processor. At each iteration $k$, let $\mathcal{A}_k \subseteq \{1, \dots, N\}$ denote the subset of worker processors from whom the main processor receives the updated decision variable $\widehat{\mathbf{x}}_i$, and let $\mathcal{A}_k^{\complement} \subseteq \{1, \dots, N\}$ denote the rest of the worker processors, whose information does not arrive.
\begin{definition}[Bounded Delay]\label{def: bounded delay}
Let an integer $\tau \geq 1$ denote the maximum tolerable delay. At any iteration $k \geq 0$, with a \textit{bounded delay}, it must holds that $i \in \mathcal{A}_k \cup \mathcal{A}_{k-1} \cup \cdots \cup \mathcal{A}_{k-\tau+1}$ for all $i = 1, \dots, N$. When $\tau = 1$, it's a synchronous scheme.
\end{definition}
At each each iteration $k$ on the main processor, all worker processors are divided into two sets $\mathcal{A}_k$ and $\mathcal{A}_k^{\complement}$, distinguished by whether their information arrives or not at the current moment. Let $d_i$ denote the number of iterations that each worker processor $i$ is delayed. If $d_i < \tau - 1$ for each worker processor $i \in \mathcal{A}_k^{\complement}$, the main processor uses the partially updated decision variables $(\mathbf{x}_1^{k+1}, \dots, \mathbf{x}_N^{k+1})$ to perform both corrector and predictor updates for the Lagrangian multiplier; otherwise, the main processor must wait until the worker processors with $d_i = \tau -1$ finish their tasks with information received by the master processor. Consequently, new divide of worker processors into $\mathcal{A}_k$ and $\mathcal{A}_k^{\complement}$ is generated, and the bounded delay condition is then satisfied. The overall structure of the Asynchronous $N$-Block PCPM algorithm for solving the linearly constrained convex optimization problem \eqref{eq: N-LCP} is presented in Algorithm \ref{alg:AN-PCPM-M} and Algorithm \ref{alg:AN-PCPM-W}.
\begin{algorithm}[ht]
\caption{AN-PCPM (Main Processor)}\label{alg:AN-PCPM-M}
\begin{algorithmic}[1]
\State \textbf{Initialization} choose an arbitrary starting point $\bm{\lambda}^0$.
\State $k\gets 0$, $d_1, d_2, \dots, d_N \gets 0$
\State \textbf{wait} until receiving $\{\hat{\mathbf{x}}_i\}_{i = 1 \dots N}$
\State {\textbf{update}:
\begin{equation}
\mathbf{x}_i^0 = \hat{\mathbf{x}}_i, \quad i = 1 \dots N
\end{equation}}
\State \textbf{broadcast} $\hat{\bm{\gamma}} = \bm{\lambda}^0 + \rho (\sum_{i=1}^N A_i \mathbf{x}_i^0 - \mathbf{b})$ to all worker processors
\While {termination conditions are not met}
\State {\textbf{wait} until receiving $\{\hat{\mathbf{x}}_i\}_{i \in \mathcal{A}_k}$ such that $d_i < \tau, \quad \forall i \in \mathcal{A}_k^{\complement}$}
\State {\textbf{update}:
\begin{equation}
\begin{aligned}
&\mathbf{x}_i^{k+1} = \left\{
\begin{aligned}
&\hat{\mathbf{x}}_i, &\forall i \in \mathcal{A}_k \\
&\mathbf{x}_i^k, &\forall i \in \mathcal{A}_k^{\complement}
\end{aligned}
\right. \\
&d_i = \left\{
\begin{aligned}
&0, &\forall i \in \mathcal{A}_k \\
&d_i + 1, &\forall i \in \mathcal{A}_k^{\complement}
\end{aligned}
\right.
\end{aligned}
\end{equation}}
\State {\textbf{update}:
\begin{equation}
\bm{\lambda}^{k+1} = \bm{\lambda}^k + \rho \big(\sum_{i = 1}^N A_i \mathbf{x}_i^{k+1} - \mathbf{b}\big)
\end{equation}}
\State \textbf{broadcast} $\hat{\bm{\gamma}} = \bm{\lambda}^{k+1} + \rho \sum_{i=1}^N A_i \mathbf{x}_i^{k+1}$ to the worker processors in $\mathcal{A}_k$
\State $k++$
\EndWhile
\State {\textbf{return} $(\mathbf{x}_1^k, \dots, \mathbf{x}_N^k, \bm{\lambda}^k)$}
\end{algorithmic}
\end{algorithm}
\begin{algorithm}[ht]
\caption{AN-PCPM (Woker Processor)}\label{alg:AN-PCPM-W}
\begin{algorithmic}[1]
\State \textbf{send} $\hat{\mathbf{x}}_i = \mathbf{x}_i^0$ to the main processor
\While {not receiving termination signal}
\State \textbf{wait} until receiving $\hat{\bm{\gamma}}$
\State {\textbf{calculate}:
\begin{equation}
\mathbf{y}_i = \underset{\mathbf{x}_i \in \mathcal{X}_i}{\argmin} f_i(\mathbf{x}_i) + \hat{\bm{\gamma}}^T A_i \mathbf{x}_i + \frac{1}{2 \rho} \lVert \mathbf{x}_i - \hat{\mathbf{x}}_i \rVert^2
\end{equation}}
\State \textbf{update}: $\hat{\mathbf{x}}_i = \mathbf{y}_i$
\State \textbf{send} $\hat{\mathbf{x}}_i$ to the main processor
\EndWhile
\end{algorithmic}
\end{algorithm}
\subsection{Convergence Analysis}\label{sec: AN-PCPM convergence}
Different from the synchronous $N$-block PCPM algorithm, the convexity of $f_i$ is not enough to achieve the global convergence due to the asynchronous delay in the system. We make the following additional assumption on problem \eqref{eq: N-LCP}. 
\begin{assumption}[Strong Convexity]\label{assp: strong convexity}
For all $i = 1 \dots N$, each $f_i: \mathcal{X}_i \to \mathbb{R}$ is a continuous, strongly convex function with modulus $\sigma_i > 0$. 
\end{assumption}
Accordingly, we extend Lemma~\ref{lem: Proximal Minimization} to functions with strong convexity.
\begin{lemma}[Inequality of Proximal Minimization Point with Strong Convexity]\label{lem: Proximal Minimization Strong Convexity}
Given a closed, convex set $\mathbb{Z} \subset \mathbb{R}^n$, and a continuous, strongly convex function $F: \mathbb{Z} \to \mathbb{R}$ with modulus $\sigma$. With a given  point $\bar{\mathbf{z}} \in \mathbb{Z}$ and a positive number $\rho > 0$, if $\widehat{\mathbf{z}}$ is a proximal minimization point; i.e. $\widehat{\mathbf{z}} \coloneqq \arg \underset{\mathbf{z} \in \mathbb{Z}}{\min}\ F(\mathbf{z}) + \frac{1}{2 \rho} \lVert \mathbf{z} - \bar{\mathbf{z}} \rVert_2^2$, then we have that
\begin{equation}
F(\widehat{\mathbf{z}}) - F(\mathbf{z}) \leq \frac{1}{2 \rho}\lVert \bar{\mathbf{z}} - \mathbf{z} \rVert_2^2 - (\frac{\sigma}{2} + \frac{1}{2 \rho})\lVert \widehat{\mathbf{z}} - \mathbf{z} \rVert_2^2 - \frac{1}{2 \rho}\lVert \widehat{\mathbf{z}} - \bar{\mathbf{z}} \rVert_2^2, \quad \forall \mathbf{z} \in \mathbb{Z}.
\end{equation}
\end{lemma}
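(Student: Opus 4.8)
The plan is to follow the same template as the proof of Lemma~\ref{lem: Proximal Minimization}, the only difference being that once $F$ is strongly convex the regularized objective acquires a larger strong-convexity modulus. Define $\Phi(\mathbf{z}) \coloneqq F(\mathbf{z}) + \frac{1}{2\rho}\lVert \mathbf{z} - \bar{\mathbf{z}} \rVert_2^2$. Since $F$ is strongly convex with modulus $\sigma$ and the proximal term $\frac{1}{2\rho}\lVert \mathbf{z} - \bar{\mathbf{z}} \rVert_2^2$ is strongly convex with modulus $\frac{1}{\rho}$, the sum $\Phi$ is strongly convex with modulus $\sigma + \frac{1}{\rho}$; this is the single quantitative upgrade over the convex case treated in Lemma~\ref{lem: Proximal Minimization}, where the modulus was only $\frac{1}{\rho}$.

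First I would record the first-order optimality of the proximal minimizer. To deal cleanly with the set constraint $\mathbf{z}\in\mathbb{Z}$, I would absorb it into the objective by replacing $F$ with $F + \iota_{\mathbb{Z}}$, where $\iota_{\mathbb{Z}}$ is the (convex) indicator of $\mathbb{Z}$; this leaves the modulus of strong convexity equal to $\sigma$ and makes $\widehat{\mathbf{z}}$ an unconstrained minimizer, so that $\mathbf{0}\in\partial\Phi(\widehat{\mathbf{z}})$ holds genuinely. Next, invoking strong convexity of $\Phi$ together with the vanishing subgradient at $\widehat{\mathbf{z}}$ yields, for every $\mathbf{z}\in\mathbb{Z}$,
\begin{equation}
\Phi(\mathbf{z}) - \Phi(\widehat{\mathbf{z}}) \geq \Big(\frac{\sigma}{2} + \frac{1}{2\rho}\Big)\lVert \widehat{\mathbf{z}} - \mathbf{z} \rVert_2^2.
\end{equation}
The remaining step is pure bookkeeping: expanding $\Phi(\mathbf{z}) - \Phi(\widehat{\mathbf{z}}) = F(\mathbf{z}) - F(\widehat{\mathbf{z}}) + \frac{1}{2\rho}\big(\lVert \mathbf{z} - \bar{\mathbf{z}} \rVert_2^2 - \lVert \widehat{\mathbf{z}} - \bar{\mathbf{z}} \rVert_2^2\big)$ and rearranging the displayed inequality isolates $F(\widehat{\mathbf{z}}) - F(\mathbf{z})$ on the left, producing exactly the three right-hand terms of the claim once one notes $\frac{\sigma+1/\rho}{2} = \frac{\sigma}{2} + \frac{1}{2\rho}$.

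An equivalent route, if one prefers to avoid the indicator-function formalism, is to write the variational-inequality form of optimality: there is a subgradient $\mathbf{g}\in\partial F(\widehat{\mathbf{z}})$ with $\langle \mathbf{g} + \frac{1}{\rho}(\widehat{\mathbf{z}} - \bar{\mathbf{z}}),\, \mathbf{z} - \widehat{\mathbf{z}}\rangle \geq 0$ for all $\mathbf{z}\in\mathbb{Z}$, combine it with the strong-convexity lower bound $F(\mathbf{z}) \geq F(\widehat{\mathbf{z}}) + \langle \mathbf{g},\, \mathbf{z} - \widehat{\mathbf{z}}\rangle + \frac{\sigma}{2}\lVert \mathbf{z} - \widehat{\mathbf{z}} \rVert_2^2$, and then apply the polarization identity $\langle \bar{\mathbf{z}} - \widehat{\mathbf{z}},\, \mathbf{z} - \widehat{\mathbf{z}}\rangle = \frac{1}{2}\big(\lVert \bar{\mathbf{z}} - \widehat{\mathbf{z}} \rVert_2^2 + \lVert \mathbf{z} - \widehat{\mathbf{z}} \rVert_2^2 - \lVert \bar{\mathbf{z}} - \mathbf{z} \rVert_2^2\big)$ to reach the same conclusion. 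I do not expect any genuine obstacle here, as the content is essentially a one-line strengthening of Lemma~\ref{lem: Proximal Minimization}. The only point requiring a little care is the constrained domain, i.e. ensuring that the optimality condition and the strong-convexity inequality are applied correctly over $\mathbb{Z}$ rather than over all of $\mathbb{R}^n$; both the indicator-function device and the variational-inequality formulation dispose of this cleanly.
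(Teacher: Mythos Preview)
Your proposal is correct and follows essentially the same approach as the paper: define $\Phi(\mathbf{z})=F(\mathbf{z})+\frac{1}{2\rho}\lVert\mathbf{z}-\bar{\mathbf{z}}\rVert_2^2$, note it is strongly convex with modulus $\sigma+\frac{1}{\rho}$, use optimality of $\widehat{\mathbf{z}}$ to obtain $\Phi(\mathbf{z})-\Phi(\widehat{\mathbf{z}})\geq(\frac{\sigma}{2}+\frac{1}{2\rho})\lVert\widehat{\mathbf{z}}-\mathbf{z}\rVert_2^2$, and expand. Your added care with the constraint (via the indicator function or the variational-inequality formulation) is in fact more rigorous than the paper, which simply writes $\partial_{\mathbf{z}}\Phi(\widehat{\mathbf{z}})=\mathbf{0}$ without comment.
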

\begin{proof}
Denote $\Phi(\mathbf{z}) = F(\mathbf{z}) + \frac{1}{2 \rho} \lVert \mathbf{z} - \bar{\mathbf{z}} \rVert_2^2$. By the definition of $\widehat{\mathbf{z}}$, we have $\partial_{\mathbf{z}} \Phi(\widehat{\mathbf{z}}) = \mathbf{0}$. Since $\Phi(\mathbf{z})$ is strongly convex with modulus $\sigma + \frac{1}{\rho}$, it follows that $\Phi(\mathbf{z}) - \Phi(\widehat{\mathbf{z}}) \geq (\frac{\sigma}{2} + \frac{1}{2 \rho})\lVert \widehat{\mathbf{z}} - \mathbf{z} \rVert_2^2$ for any $\mathbf{z} \in \mathbb{Z}$.
\end{proof}
Now, we present the main convergence result.
\begin{theorem}[Sub-linear Convergence Rate]\label{thm: sub-linear convergence}
Let Assumption~\ref{assp: N-PCPM Slater}, Assumption~\ref{assp: N-PCPM Saddle Point} and Assumption~\ref{assp: strong convexity} hold. Choose a step size $\rho$ satisfying:
\begin{equation}
\rho \leq \frac{\sigma_{min}}{25 N (\tau - 1)^2 A_{max}},
\end{equation}
where $\sigma_{min} \coloneqq \min_{i=1}^N\{\sigma_i\}$. Denote $\bar{\mathbf{x}}_i^k = \frac{1}{k} \sum_{k^{\prime}=1}^k \mathbf{x}_i^{k^{\prime}}$ for all $i = 1, \dots, N$, where $\{(\mathbf{x}_1^k, \dots, \mathbf{x}_N^k)\}$ is the sequence generated by Algorithm \ref{alg:AN-PCPM-M} and Algorithm \ref{alg:AN-PCPM-W}, then for all $k > 0$, it holds that:
\begin{equation}
\Big\lvert \sum_{i=1}^N f_i(\bar{\mathbf{x}}_i^k) - \sum_{i=1}^N f_i(\mathbf{x}_i^*) \Big\rvert \leq \frac{\delta_{\bm{\lambda}} C_1 + C_2}{k}, \quad \Big\lVert \sum_{i=1}^N A_i \bar{\mathbf{x}}^k - \mathbf{b}\Big\rVert_2 \leq \frac{C_1}{k},
\end{equation}
where $\delta_{\bm{\lambda}} = \lVert \bm{\lambda}^* \rVert_2$ and $C_1$, $C_2$ are some finite constants.
\end{theorem}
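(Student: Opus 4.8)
The plan is to follow the classical ergodic primal--dual template, adapted to the asynchronous updates and powered by the strongly convex proximal inequality of Lemma~\ref{lem: Proximal Minimization Strong Convexity}. First I would write down the per-iteration variational inequality for each worker block. For $i \in \mathcal{A}_k$, the incoming $\mathbf{x}_i^{k+1}$ minimizes $f_i(\mathbf{x}_i) + \hat{\bm{\gamma}}^T A_i \mathbf{x}_i + \frac{1}{2\rho}\lVert \mathbf{x}_i - \hat{\mathbf{x}}_i\rVert_2^2$, where $\hat{\bm{\gamma}}$ and $\hat{\mathbf{x}}_i$ are the (possibly stale) predictor and primal iterate that worker $i$ last received, delayed by at most $\tau-1$ main iterations by Definition~\ref{def: bounded delay}. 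Applying Lemma~\ref{lem: Proximal Minimization Strong Convexity} with $F(\mathbf{x}_i) = f_i(\mathbf{x}_i) + \hat{\bm{\gamma}}^T A_i \mathbf{x}_i$ (strongly convex with modulus $\sigma_i$) and comparison point $\mathbf{x}_i^*$ yields, for each block, an inequality bounding $f_i(\mathbf{x}_i^{k+1}) - f_i(\mathbf{x}_i^*) + \hat{\bm{\gamma}}^T A_i(\mathbf{x}_i^{k+1} - \mathbf{x}_i^*)$ by telescoping distance terms minus a coercive term $\frac{\sigma_i}{2}\lVert \mathbf{x}_i^{k+1}-\mathbf{x}_i^*\rVert_2^2$. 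For the dual side, the corrector $\bm{\lambda}^{k+1} = \bm{\lambda}^k + \rho(\sum_i A_i \mathbf{x}_i^{k+1} - \mathbf{b})$ together with the three--point identity $2(\bm{\lambda}^{k+1}-\bm{\lambda})^T(\bm{\lambda}^{k+1}-\bm{\lambda}^k) = \lVert \bm{\lambda}^{k+1}-\bm{\lambda}\rVert_2^2 - \lVert \bm{\lambda}^k-\bm{\lambda}\rVert_2^2 + \lVert \bm{\lambda}^{k+1}-\bm{\lambda}^k\rVert_2^2$ turns the residual into a telescoping dual distance for any fixed $\bm{\lambda}$.

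Second, I would absorb the asynchrony. Replacing the stale $\hat{\bm{\gamma}}$ by the current $\bm{\lambda}^{k+1}$ and the stale $\hat{\mathbf{x}}_i$ by $\mathbf{x}_i^k$ introduces cross terms of the form $(\hat{\bm{\gamma}} - \bm{\lambda}^{k+1})^T A_i(\mathbf{x}_i^{k+1}-\mathbf{x}_i^*)$ and inner products involving $\hat{\mathbf{x}}_i - \mathbf{x}_i^k$. By the bounded--delay assumption, each such difference is a sum of at most $\tau-1$ consecutive increments; for instance $\hat{\bm{\gamma}} - \bm{\lambda}^{k+1}$ is controlled by $\sum_{l} \lVert \bm{\lambda}^{l+1} - \bm{\lambda}^l\rVert_2$, and each dual increment equals $\rho\lVert \sum_i A_i(\mathbf{x}_i^{l+1}-\mathbf{x}_i^l)\rVert_2 \le \rho A_{max}\sum_i\lVert \mathbf{x}_i^{l+1}-\mathbf{x}_i^l\rVert_2$. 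I would bound every cross term with Young's inequality, splitting it so that a fraction is dominated by the coercive $\frac{\sigma_i}{2}\lVert \mathbf{x}_i^{k+1}-\mathbf{x}_i^*\rVert_2^2$ supplied by strong convexity and the remainder forms a bounded, telescopable quantity. Cauchy--Schwarz over a delay window of length $\tau-1$ produces the factor $(\tau-1)^2$, summation over the $N$ blocks produces the factor $N$, and the numerical slack that makes the negative terms dominate fixes the constant $25$; this is exactly what the step-size condition $\rho \le \sigma_{min}/(25 N (\tau-1)^2 A_{max})$ guarantees. This absorption of the delay error is the main obstacle: all the bookkeeping --- matching each stale quantity to the correct window of increments and verifying that the coercive strong-convexity terms outweigh the accumulated error uniformly in $k$ --- lives here, and it is the only place where Assumption~\ref{assp: strong convexity} is essential.

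Third, I would sum the resulting per-iteration inequality over $k' = 1, \dots, k$. The primal distances $\sum_i\lVert \mathbf{x}_i^{k'}-\mathbf{x}_i^*\rVert_2^2$ and the dual distances $\lVert \bm{\lambda}^{k'}-\bm{\lambda}\rVert_2^2$ telescope, the delay-error remainders cancel against the coercive terms by the step-size choice, and what survives is a constant of the form $\frac{1}{2\rho}\lVert \bm{\lambda}^0 - \bm{\lambda}\rVert_2^2 + D_0$, with $D_0$ collecting the initial primal gap and independent of $\bm{\lambda}$. Recalling that $\mathcal{L}(\mathbf{x}_1^*,\dots,\mathbf{x}_N^*, \bm{\lambda}^{k'}) = \sum_i f_i(\mathbf{x}_i^*)$ because $\sum_i A_i \mathbf{x}_i^* = \mathbf{b}$, the summed inequality reads $\sum_{k'=1}^k\big[\sum_i f_i(\mathbf{x}_i^{k'}) - \sum_i f_i(\mathbf{x}_i^*) + \bm{\lambda}^T(\sum_i A_i \mathbf{x}_i^{k'} - \mathbf{b})\big] \le \frac{1}{2\rho}\lVert \bm{\lambda}^0-\bm{\lambda}\rVert_2^2 + D_0$. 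Dividing by $k$ and invoking convexity of each $f_i$ together with the linearity of the coupling constraint (Jensen's inequality on the average iterate $\bar{\mathbf{x}}_i^k$) gives, for every $\bm{\lambda}$,
\[
\sum_{i=1}^N f_i(\bar{\mathbf{x}}_i^k) - \sum_{i=1}^N f_i(\mathbf{x}_i^*) + \bm{\lambda}^T\Big(\sum_{i=1}^N A_i \bar{\mathbf{x}}_i^k - \mathbf{b}\Big) \le \frac{1}{k}\Big(\tfrac{1}{2\rho}\lVert \bm{\lambda}^0-\bm{\lambda}\rVert_2^2 + D_0\Big).
\]

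Finally, I would extract the two stated bounds by specializing $\bm{\lambda}$. Setting $\bm{\lambda} = \mathbf{0}$ yields the one-sided objective bound $\sum_i f_i(\bar{\mathbf{x}}_i^k) - \sum_i f_i(\mathbf{x}_i^*) \le C_2/k$ with $C_2 = \tfrac{1}{2\rho}\lVert \bm{\lambda}^0\rVert_2^2 + D_0$. For feasibility, writing the residual $\mathbf{r} = \sum_i A_i \bar{\mathbf{x}}_i^k - \mathbf{b}$ and choosing $\bm{\lambda} = \bm{\lambda}^* + \mathbf{r}/\lVert \mathbf{r}\rVert_2$, the saddle-point inequality $\mathcal{L}(\mathbf{x}_1^*,\dots,\mathbf{x}_N^*,\bm{\lambda}^*)\le\mathcal{L}(\bar{\mathbf{x}}_1^k,\dots,\bar{\mathbf{x}}_N^k,\bm{\lambda}^*)$ from Assumption~\ref{assp: N-PCPM Saddle Point} gives $\sum_i f_i(\bar{\mathbf{x}}_i^k) - \sum_i f_i(\mathbf{x}_i^*) \ge -(\bm{\lambda}^*)^T\mathbf{r}$, so the left side is at least $\lVert \mathbf{r}\rVert_2$ while the right side is at most $C_1/k$ with $C_1 = \tfrac{1}{2\rho}(\lVert \bm{\lambda}^0-\bm{\lambda}^*\rVert_2+1)^2 + D_0$; hence $\lVert \mathbf{r}\rVert_2 \le C_1/k$. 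Combining this feasibility bound with the same saddle-point inequality yields the matching lower bound $\sum_i f_i(\bar{\mathbf{x}}_i^k) - \sum_i f_i(\mathbf{x}_i^*) \ge -\lVert \bm{\lambda}^*\rVert_2\lVert \mathbf{r}\rVert_2 \ge -\delta_{\bm{\lambda}} C_1/k$, and together with the upper bound $C_2/k$ this gives $\lvert \sum_i f_i(\bar{\mathbf{x}}_i^k) - \sum_i f_i(\mathbf{x}_i^*)\rvert \le (\delta_{\bm{\lambda}}C_1 + C_2)/k$, as claimed.
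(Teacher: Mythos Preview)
Your proposal is correct and follows essentially the same route as the paper: apply Lemma~\ref{lem: Proximal Minimization Strong Convexity} block by block, control the delay-induced cross terms via Young's inequality with the coercive $\tfrac{\sigma_i}{2}\lVert \mathbf{x}_i^{k+1}-\mathbf{x}_i^*\rVert_2^2$ absorbing one half and the dual-increment sum $\sum_k \lVert \bm{\lambda}^{k+1}-\bm{\lambda}^k\rVert_2^2$ (from the three-point identity) absorbing the other---exactly where the step-size condition enters---then sum, apply Jensen to the ergodic average, and specialize $\bm{\lambda}$. The only cosmetic differences are that the paper explicitly unrolls the stale blocks $i\in\mathcal{A}_k^{\complement}$ to their last genuine update (you should spell this out rather than leave it implicit), drops the primal proximal terms as a nonnegative combination rather than telescoping them, and chooses $\bm{\lambda}=\bm{\lambda}^*$ instead of $\bm{\lambda}=\mathbf{0}$ for the upper objective bound; none of this changes the argument.
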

\begin{proof}
Please see Section~\ref{app: AN-PCPM Proof_1} for details.
\end{proof}
\section{Numerical Experiments}\label{sec: Num}
\subsection{An Optimization Problem on a Graph}
In this subsection, we consider an optimization problem on a graph, arising from the training process of regressors with spatial clustering, proposed by \cite{hallac2015network}. Traditional regressors obtains a parameter vector $\mathbf{x}$ via solving the following optimization problem on a training data set:
\begin{equation}
\underset{\mathbf{x} \in \mathcal{X}}{\text{minimize}} \quad \sum_{i = 1}^N f_i(\mathbf{x}) + r(\mathbf{x}),
\end{equation}
where $N$ is the number of data points, $\mathcal{X} \subset \mathbb{R}^n$ describes the constraints on the parameter vector, each function $f_i: \mathbb{R}^n \to \mathbb{R}$ denotes the loss function on each training data point for all $i = 1, \dots, N$, and $r: \mathbb{R}^n \to \mathbb{R}$ denotes some type of regularization function. 
\par When the spatial information is accessible, such as the latitude and longitude data, a map of data points then becomes available. Instead of using a global regressor with a common parameter vector $\mathbf{x}$ for the whole data set, a local regressor can be built at each data point $i = 1, \dots, N$ with a local parameter vector $\mathbf{x}_i \in \mathbb{R}^n$. Let $d_{ij}$ denote the distance between the data point $i$ and $j$ ($i \not= j$). Different from distributed learning, where a consensus constraint should be satisfied for all local variables, we require that the difference between two local parameter vectors $\lVert \mathbf{x}_i - \mathbf{x}_j \rVert_2^2$ decreases as the distance $d_{ij}$ decreases.
Let $\mathcal{N}_{\epsilon}(i)$ denote a set of data points within a neighborhood of point $i$, i.e., $\mathcal{N}_{\epsilon}(i) \coloneqq \{j = 1, \dots, N | j \not= i, d_{ij} \leq \epsilon \}$. If any data point is regarded as a vertex and any two data points within a neighborhood are connected through an edge, a graph is then constructed as $\mathcal{G} = (\mathcal{V}, \mathcal{E})$, where $\mathcal{V}$ denotes the set of vertices with $N = \lvert \mathcal{V} \rvert$ and $\mathcal{E}$ denotes the set of edges with $p = \lvert \mathcal{E} \rvert$. Consider the following optimization problem on the graph:
\begin{equation}\label{eq: P_graph}
\begin{aligned}
\underset{\mathbf{x}_1, \dots, \mathbf{x}_N}{\text{minimize}} \quad &\sum_{i \in \mathcal{V}} \big[f_i(\mathbf{x}_i) + r(\mathbf{x}_i)\big] + \omega \sum_{(j, k) \in \mathcal{E}} w_{jk} \lVert \mathbf{x}_j - \mathbf{x}_k \rVert_2^2 \\
\text{subject to} \quad &\mathbf{x}_i \in \mathcal{X}_i, \quad i = 1, \dots, N.
\end{aligned}
\end{equation}
Different from \cite{hallac2015network}, we use $\lVert \cdot \rVert_2^2$ instead of $\lVert \cdot \rVert_2$. The parameter $w_{jk}$ along each edge $(j,k) \in \mathcal{E}$, describing the weight of the penalty term of the difference between the two connected vertices, increases as $d_{jk}$ decreases. The global parameter $\omega$ describes the trade-off between minimizing the individual loss function on each data point and agreeing with neighbors. When $\omega = 0$, $\mathbf{x}_i^*$ is simply the solution to the optimization problem: $\underset{\mathbf{x}_i \in \mathbb{X}_i}{\text{minimize}} \quad f_i(\mathbf{x}_i) + r(\mathbf{x}_i)$, obtained locally at each vertex $i$. When $\omega \to +\infty$, the model reduces to a traditional regressor without spatial clustering.
\par Once the optimal solution $(\mathbf{x}_1^*, \dots, \mathbf{x}_N^*)$ is obtained, for any new node $i^{\prime}$, the local regressor can be evaluated with the local parameter vector $\mathbf{x}_{i^{\prime}}$, estimated through the interpolation of the solution:
\begin{equation}
\underset{\mathbf{x_{i^{\prime}}} \in \mathcal{X}_{i^{\prime}}}{\text{minimize}} \quad \sum_{j \in \mathcal{N}_{\epsilon}(i^{\prime})} w_{i^{\prime}j} \lVert \mathbf{x}_{i^{\prime}} - \mathbf{x}_j^* \rVert_2^2.
\end{equation}
\subsection{Two Problem Reformulations}
To apply $N$-block PCPM algorithm to solve the graph optimization problem \eqref{eq: P_graph}, we first need to reformulate it into the block-separable form as in \eqref{eq: N-LCP}.
\par Similar to \cite{hallac2015network}, for each pair of connected vertices $(\mathbf{x}_j, \mathbf{x}_k)$ along the edge $(j,k) \in \mathcal{E}$, introducing a copy $(\mathbf{z}_{jk}, \mathbf{z}_{kj})$, we can rewrite the graph optimization problem \eqref{eq: P_graph} as
\begin{itemize}
\item \textbf{Problem Reformulation 1}
\begin{equation}\label{eq: Convex P_graph}
\begin{aligned}
\underset{\mathbf{x}_1, \dots, \mathbf{x}_N}{\text{minimize}} \quad &\sum_{i \in \mathcal{V}} \big[f_i(\mathbf{x}_i) + r(\mathbf{x}_i)\big] + \omega \sum_{(j, k) \in \mathcal{E}} w_{jk} \lVert \mathbf{z}_{jk} - \mathbf{z}_{kj} \rVert_2^2 \\
\text{subject to} \quad &\mathbf{x}_i \in \mathcal{X}_i, \quad \forall i \in \mathcal{V}, \\
&\mathbf{x}_i - \mathbf{z}_{ij} = \mathbf{0}, \quad \forall j \in \mathcal{N}_{\epsilon}(i), \quad \forall i \in \mathcal{V}.
\end{aligned}
\end{equation}
\end{itemize}
The reformulated problem can be decomposed into $N$ sub-problems on vertices and $p$ sub-problems along edges, using $N$-block PCPM algorithm. One small issue is that the objective function of the edge sub-problem, $\lVert \mathbf{z}_{jk} - \mathbf{z}_{kj} \rVert_2^2$, is not strongly convex.
\par To overcome this limitation, we propose an alternative way of rewriting \eqref{eq: P_graph}. For each edge $(j,k) \in \mathcal{E}$, introducing a slack variable $\mathbf{z}_{jk} = \mathbf{x}_j - \mathbf{x}_k$, we can rewrite the graph optimization problem \eqref{eq: P_graph} as
\begin{itemize}
\item \textbf{Problem Reformulation 2}
\begin{equation}\label{eq: Strongly Convex P_graph}
\begin{aligned}
\underset{\mathbf{x}_1, \dots, \mathbf{x}_N}{\text{minimize}} \quad &\sum_{i \in \mathcal{V}} \big[f_i(\mathbf{x}_i) + r(\mathbf{x}_i)\big] + \omega \sum_{(j, k) \in \mathcal{E}} w_{jk} \lVert \mathbf{z}_{jk} \rVert_2^2 \\
\text{subject to} \quad &\mathbf{x}_i \in \mathcal{X}_i, \quad \forall i \in \mathcal{V}, \\
&\mathbf{x}_j - \mathbf{x}_k - \mathbf{z}_{jk} = \mathbf{0}, \quad \forall (j,k) \in \mathcal{E}.
\end{aligned}
\end{equation}
\end{itemize}
The reformulated problem can also be decomposed into $N$ sub-problems on vertices and $p$ sub-problems along edges. However, in this way of rewriting \eqref{eq: P_graph}, the objective function of each decomposed sub-problem enjoys the nice property of strong convexity. When applying $N$-block PCPM algorithm, a linear convergence rate is expected for synchronous iteration scheme, and a sub-linear convergence rate is expected for an asynchronous scheme.
\subsection{Housing Price Prediction}
In this subsection, we present an application example of the graph optimization problem, where the housing price is predicted based on a set of features, including the number of bedrooms, the number of bathrooms, the number of square feet, and the latitude and longitude of each house. We use the same data set as \cite{hallac2015network}, a list of 985 real estate transactions over a period of one week during May of 2008 in the Greater Sacramento area. All data is standardized with zero mean and unit variance, and all missing data is then set to zero. We randomly select a subset of 193 transactions as our test data set, and use the rest as our training data set.
\par The graph is constructed based on the latitude and longitude of each house. The rule of selecting neighbors is slightly different from \cite{hallac2015network}. For each house, we connect it with all the other houses within a distance of $1.0$ mile. If the number of connected houses is less then $5$, we connect more nearest houses until the number of neighbors reaches $5$. The resulting graph has $792$ vertices and $4303$ edges. Thus, the graph optimization problem can be decomposed into $792 + 4303 = 5095$ sub-problems, using $N$-block PCPM algorithm.
\par At each data point $i = 1 \dots 792$, the decision variable is $\mathbf{x}_i = (x_{i0}, x_{i1}, x_{i2}, x_{i3})$. The predicted price for each house is:
$$
 x_{i0} + x_{i1} \times \text{(num\_bed)}_i + x_{i2} \times \text{(num\_bath)}_i + x_{i3} \times \text{(num\_sq\_ft)}_i,
$$
where $\text{(num\_bed)}_i$, $\text{(num\_bath)}_i$ and $\text{(num\_sq\_ft)}_i$ are the number of bedrooms, the number of bathrooms and the number of square feets for each house respectively. At each vertex $i$, the objective function
$$
f_i(\mathbf{x}_i) = \lVert x_{i0} + x_{i1} \times \text{(num\_bed)}_i + x_{i2} \times \text{(num\_bath)}_i + x_{i3} \times \text{(num\_sq\_ft)}_i - \text{(price)}_i \rVert_2^2
$$
is strongly convex, as well as the regularization function
$$
r(\mathbf{x}_i) = \mu \big(\lVert x_{i1} \rVert_2^2 + \lVert x_{i2} \rVert_2^2 + \lVert x_{i3} \rVert_2^2\big),
$$
where $\text{(price)}_i$ is the actual sales price for each house, and $\mu$ is a constant regularization parameter, fixed as $\mu = 0.1$. 
\subsection{Numerical Results of Synchronous N-block PCPM Algorithm}
We first apply Algorithm~\ref{alg: N-PCPM} to solve the two reformulated problem \eqref{eq: Convex P_graph} and \eqref{eq: Strongly Convex P_graph}, and compare the performance. The convergence results are shown in Figure~\ref{fig: con_strong_con}.
\begin{figure}[htb]
\centering
\begin{subfigure}{.38\textwidth}
\centering
\includegraphics[width=1.0\linewidth]{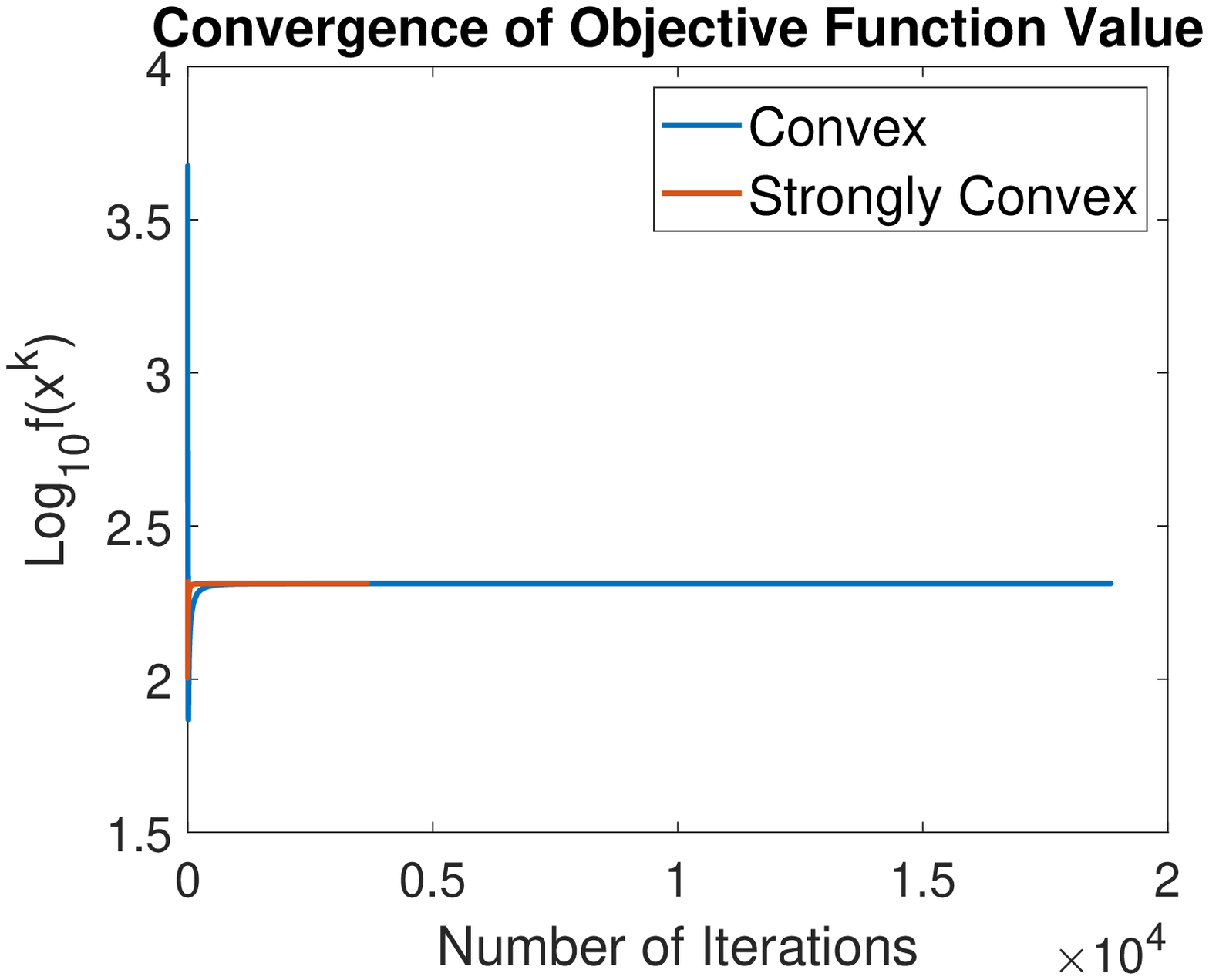}
\end{subfigure}%
\begin{subfigure}{.38\textwidth}
\centering
\includegraphics[width=1.0\linewidth]{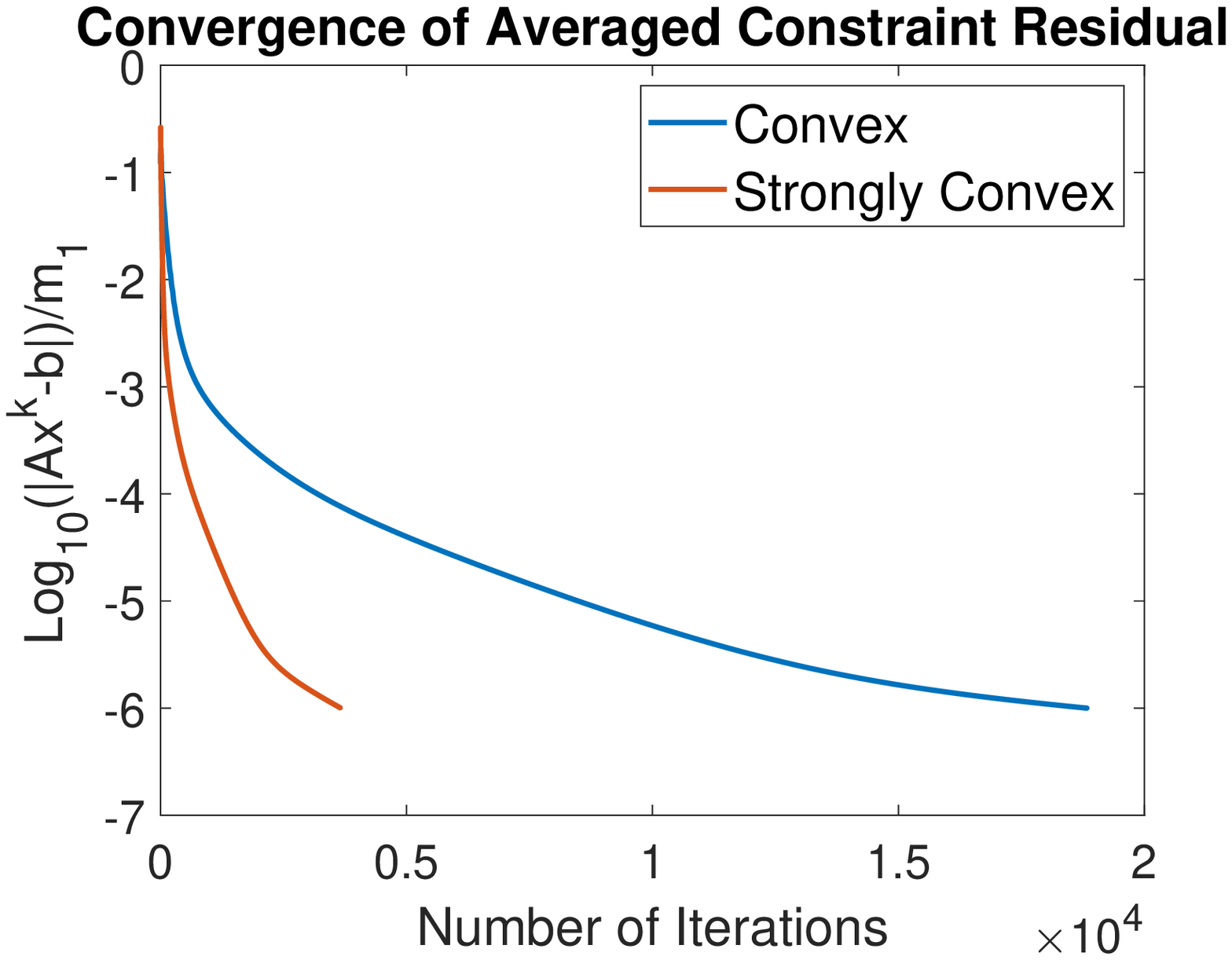}
\end{subfigure}
\begin{subfigure}{.38\textwidth}
\centering
\includegraphics[width=1.0\linewidth]{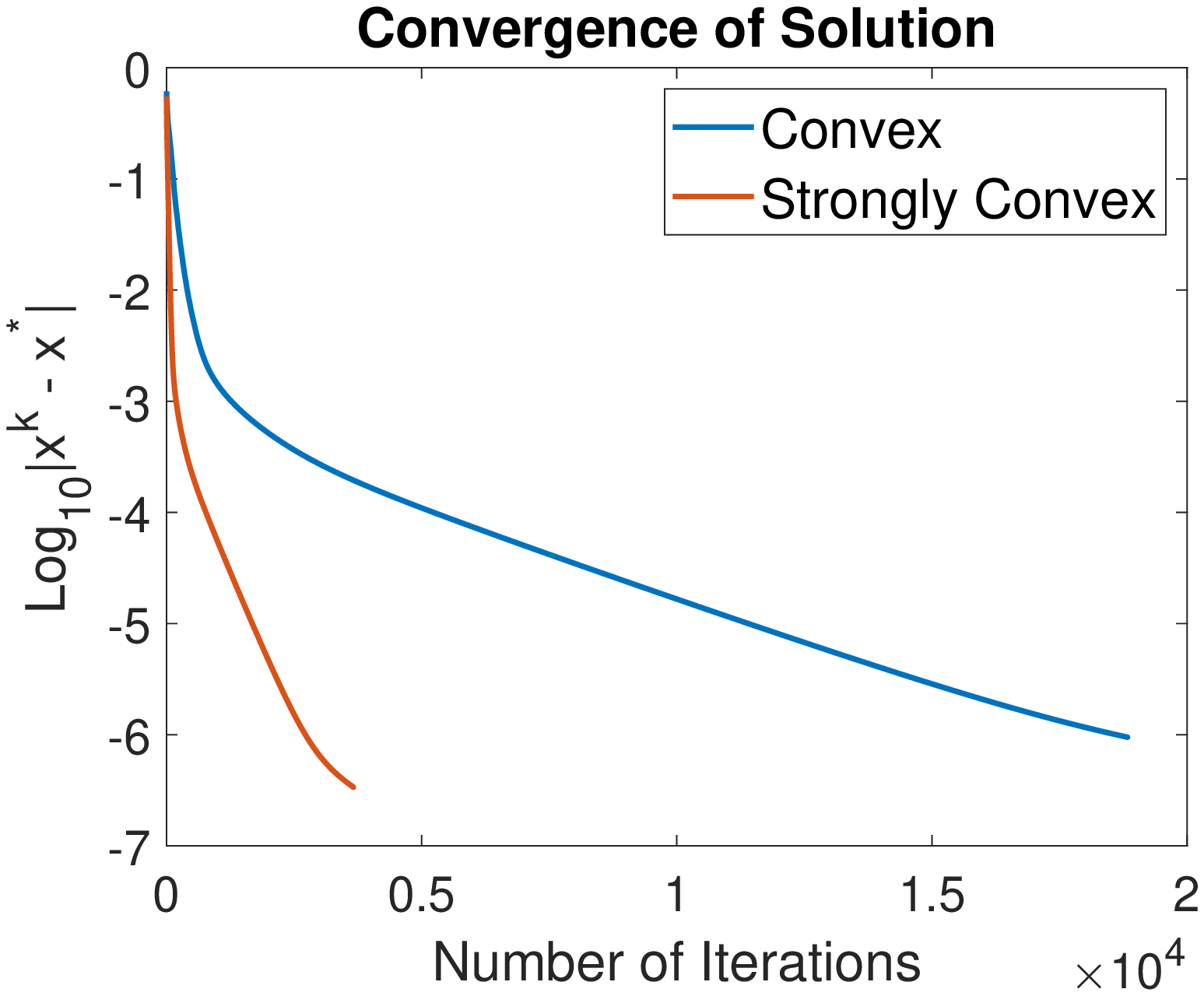}
\end{subfigure}%
\begin{subfigure}{.38\textwidth}
\centering
\includegraphics[width=1.0\linewidth]{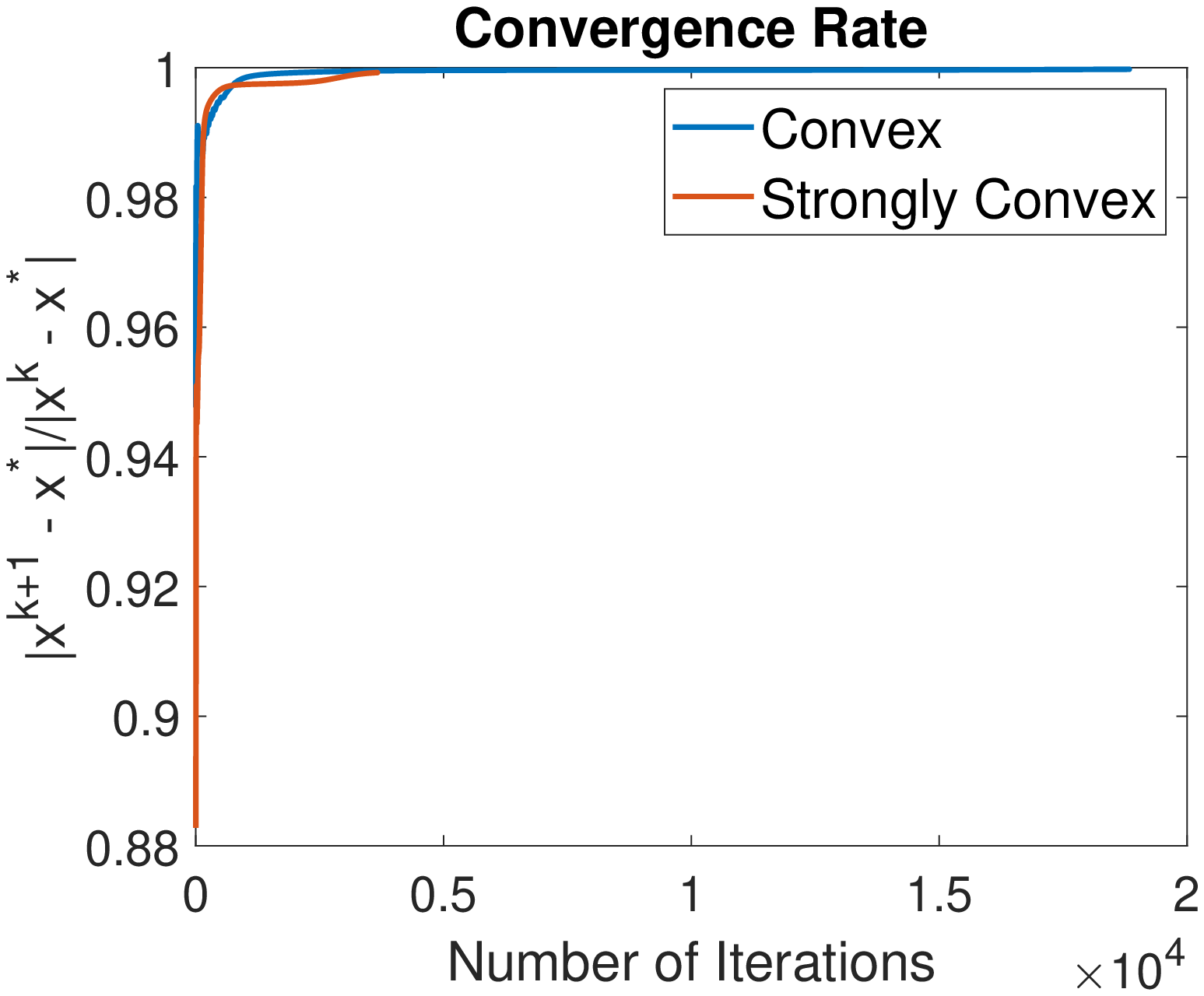}
\end{subfigure}
\caption{Convergence Results of Applying Algorithm~\ref{alg: N-PCPM} to solve Reformulated Problems \eqref{eq: Convex P_graph} and \eqref{eq: Strongly Convex P_graph} with $\omega = 1.0$ and $\rho = 0.06$.}
\label{fig: con_strong_con}
\end{figure}
Due to the strong convexity of the reformulated problem \eqref{eq: Strongly Convex P_graph}, the algorithm converges much faster than solving the reformulated problem \eqref{eq: Convex P_graph}.
\par We also plot the mean square error (MSE) on the testing data set using various values of $\omega$, shown in Figure \ref{fig: w_MSE}. 
\begin{figure}[ht]
\begin{center}
\includegraphics[scale = 0.38]{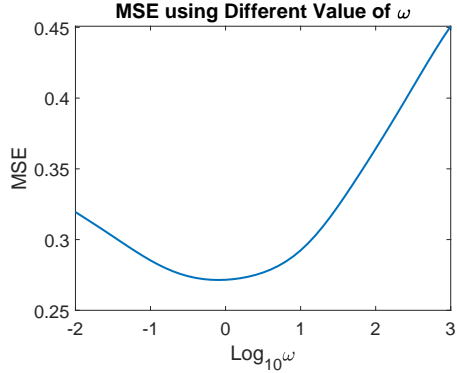}
\caption{MSE for Testing Data Set with $\omega$ Varying from $10^{-2}$ to $10^3$ and $\mu = 0.1$.}
\label{fig: w_MSE}
\end{center}
\end{figure}
When $\omega = 1.0$, a minimum MSE of $0.27$ can be obtained on the testing data set. We fixed $\omega = 1.0$ for all the numerical experiments.
\subsection{Numerical Results of Asynchronous N-block PCPM Algorithm}
We apply Algorithm~\ref{alg:AN-PCPM-M} and Algorithm~\ref{alg:AN-PCPM-W} to solve the reformulated problem \eqref{eq: Strongly Convex P_graph} with a maximum delay $\tau = 4$. The convergence results of are shown in Figure~\ref{fig: Asyn}.
\begin{figure}[htb]
\centering
\begin{subfigure}{.38\textwidth}
\centering
\includegraphics[width=1.0\linewidth]{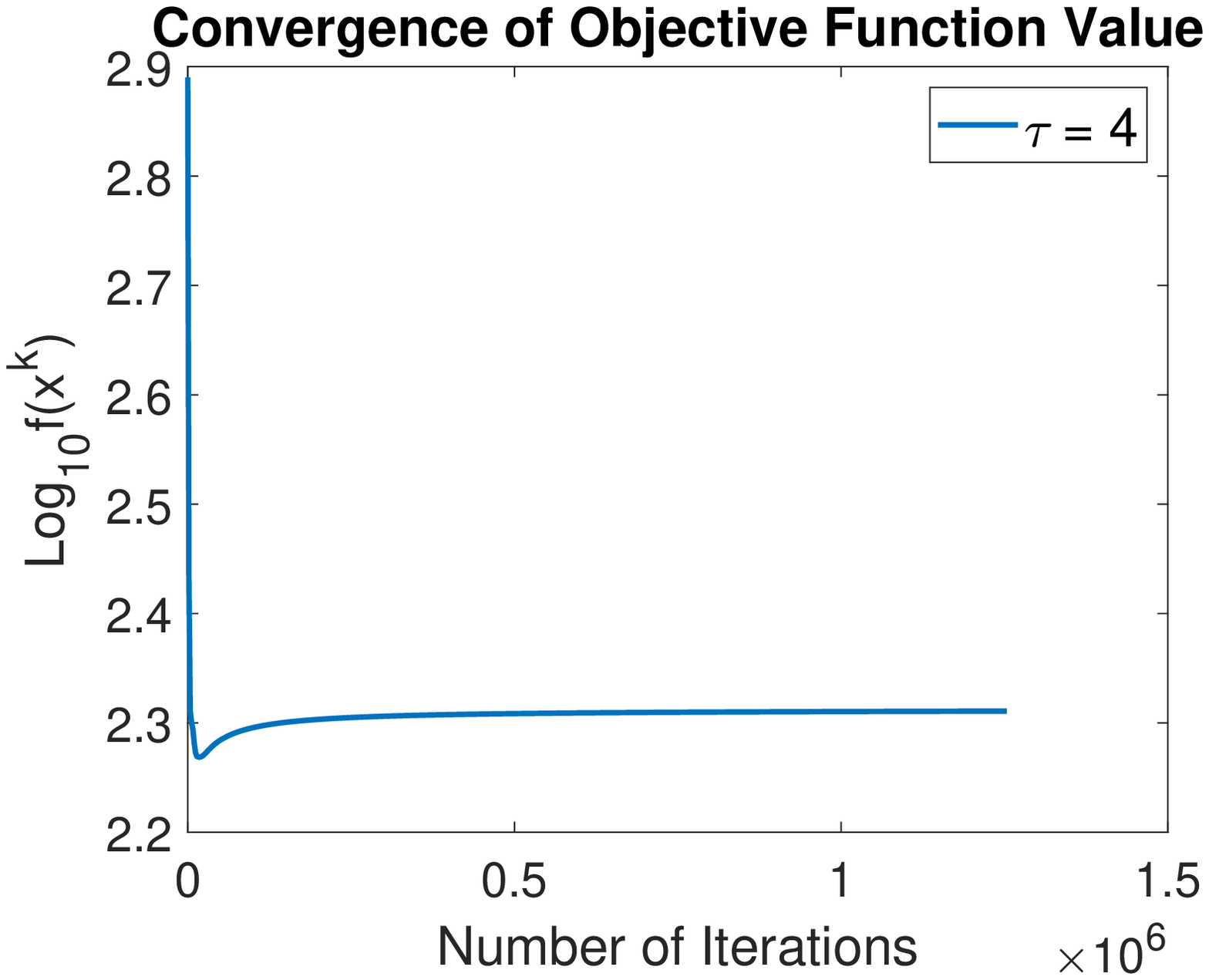}
\end{subfigure}%
\begin{subfigure}{.38\textwidth}
\centering
\includegraphics[width=1.0\linewidth]{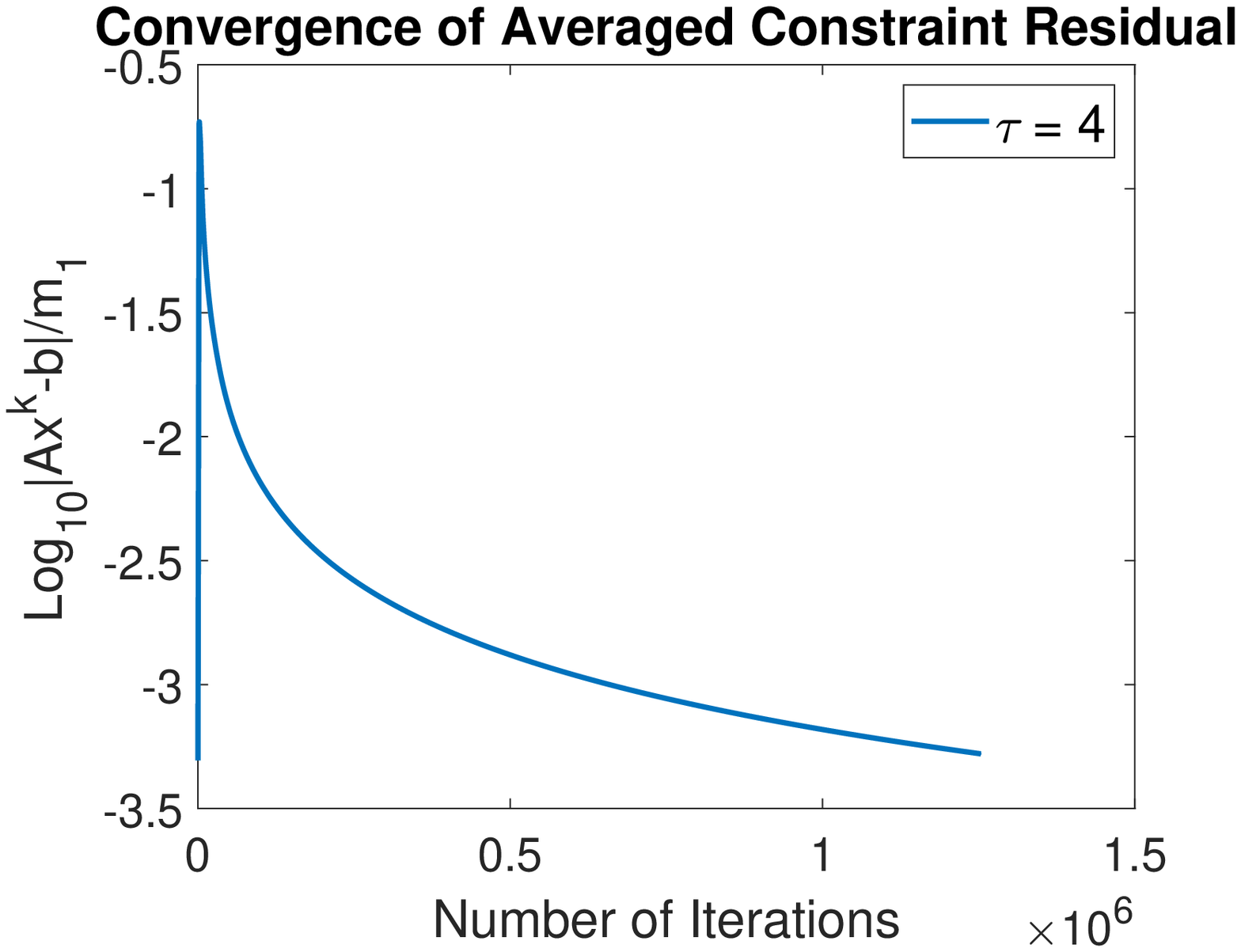}
\end{subfigure}
\begin{subfigure}{.38\textwidth}
\centering
\includegraphics[width=1.0\linewidth]{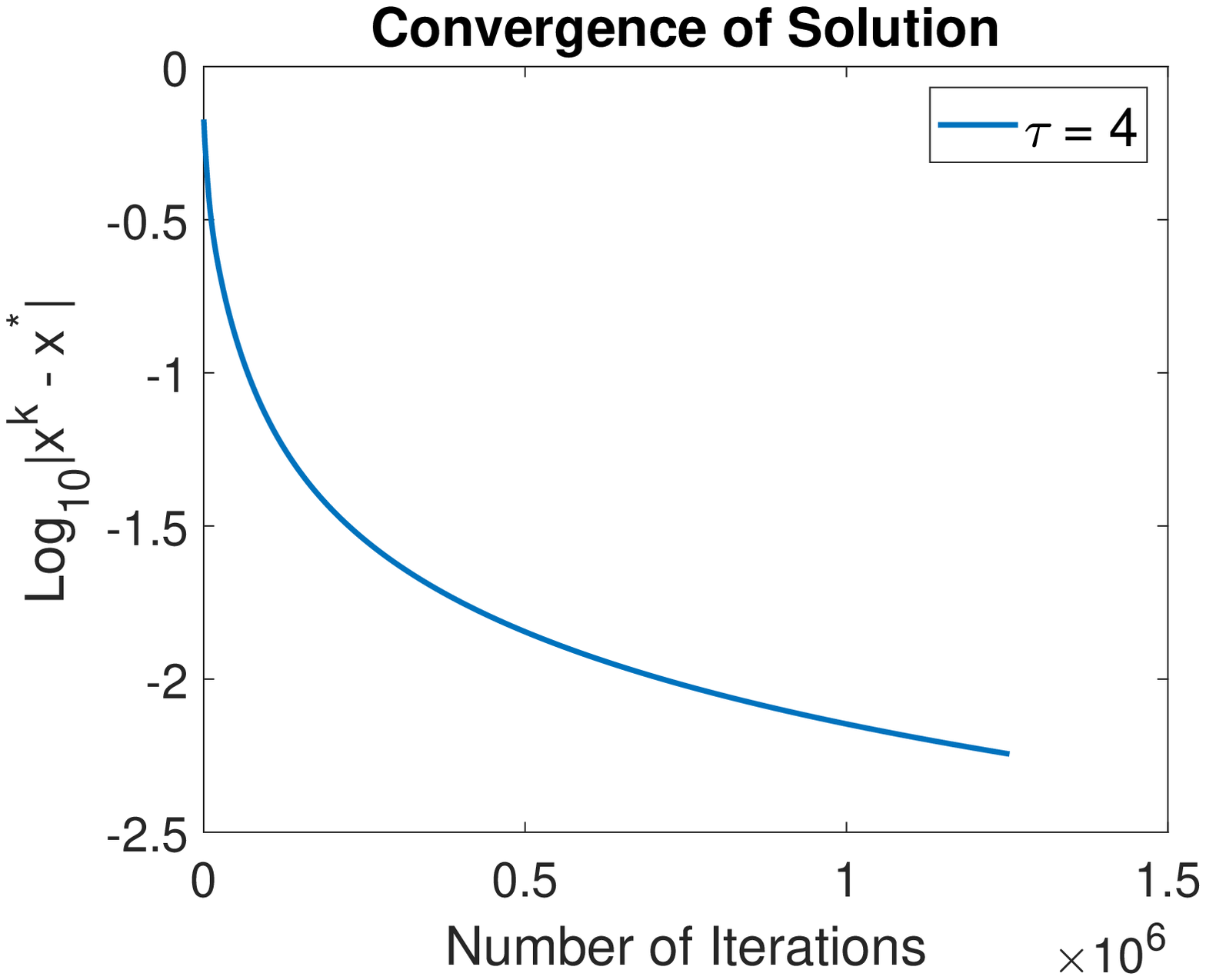}
\end{subfigure}%
\begin{subfigure}{.38\textwidth}
\centering
\includegraphics[width=1.0\linewidth]{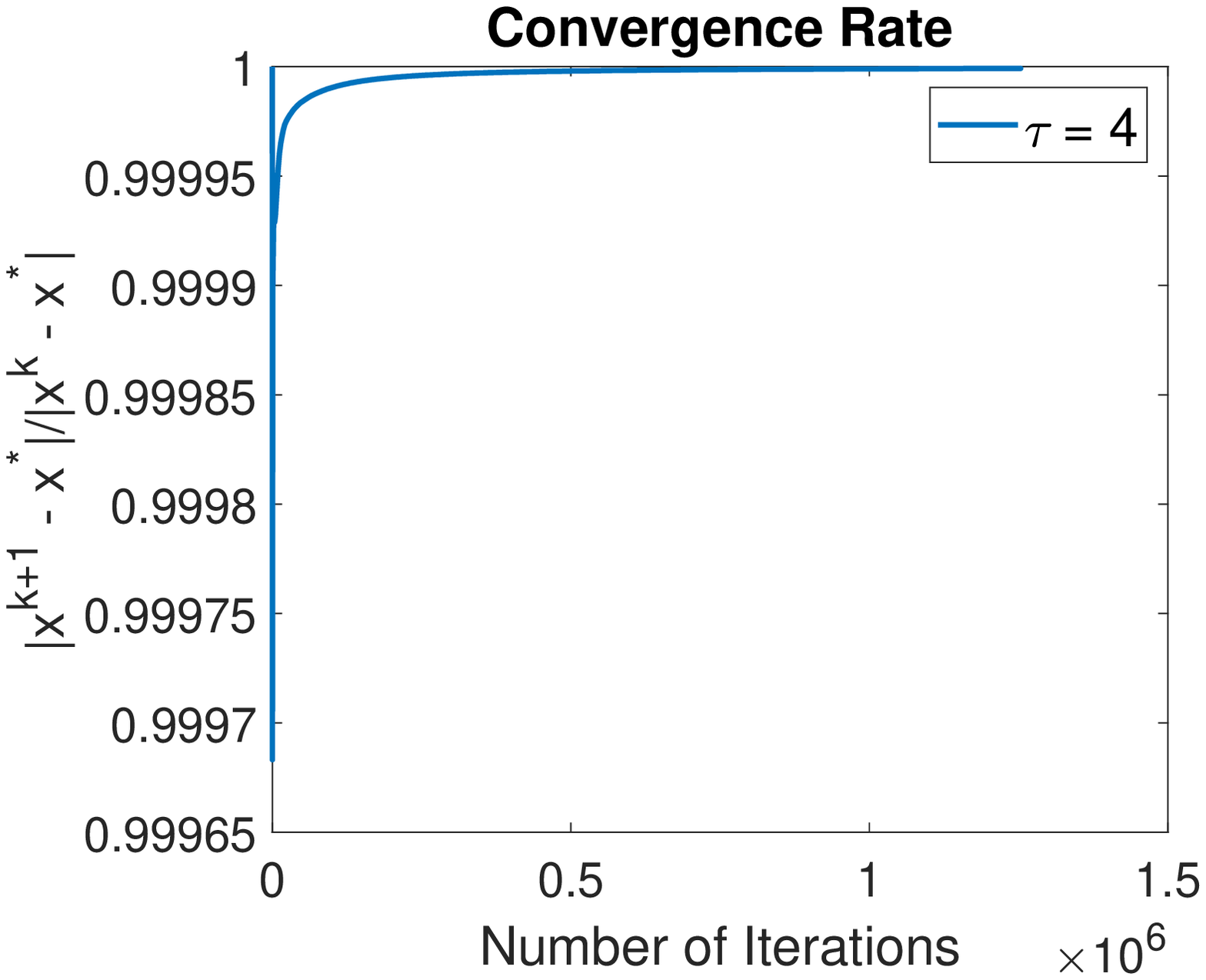}
\end{subfigure}
\caption{Convergence Results of Applying Algorithm~\ref{alg:AN-PCPM-M} and Algorithm~\ref{alg:AN-PCPM-W} to Solve the Reformulated Problem \eqref{eq: Strongly Convex P_graph} with $\tau = 4$ and $\rho = 0.0005$.}
\label{fig: Asyn}
\end{figure}
A sub-linear convergence rate is observed. 
\par While implementing the algorithm as a sequential code, we simulate the elapsed wall-clock time on the main processor. As illustrated in Figure~\ref{fig: syn_asyn}, the computation delay of the main processor is set as $1.0$ second, the computation delay of each worker processor for solving vertex sub-problem is set as $1.2$ second, the computation delay of each worker processor for solving edge sub-problem is set as $0.6$ second, and the communication delay of each worker processor is uniformly drawn from a range of $0.0$ to $1.0$ second. Under these settings, we simulate the elapsed wall-clock time on the main processor for the different values of maximum delay $\tau = 1, 2, 4, 7$, shown in Figure~\ref{fig: Asyn_Timeline}.
\begin{figure}[ht]
\begin{center}
\includegraphics[scale = 0.4]{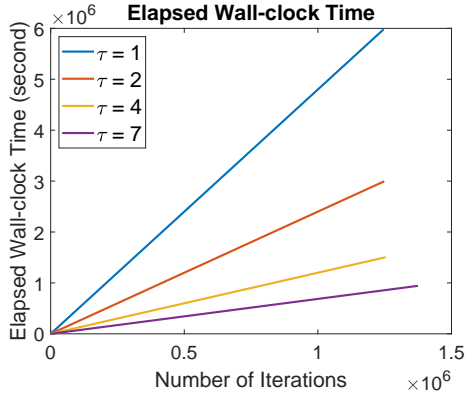}
\caption{Simulated Elapsed Wall-clock Time on the Main Processor with Various Maximum Delay $\tau$ and a Same $\rho = 0.0005$.}
\label{fig: Asyn_Timeline}
\end{center}
\end{figure}
We observe that, with a larger number of maximum delay, the number of iterations, used for the asynchronous algorithm to converge, increases but the simulated elapsed wall-clock time decreases, which implies a faster convergence with more short-time iterations.
\section{Conclusion and Future Works}\label{sec: Conclusion}
In this paper, we first proposed an $N$-block PCPM algorithm to solve $N$-block convex optimization problems with both linear and nonlinear constraints, with global convergence established. A linear convergence rate under the strong second-order conditions for optimality is observed in the numerical experiments. Next, for a starting point, we proposed an asynchronous $N$-block PCPM algorithm to solve linearly constrained $N$-block convex optimization problems. The numerical results demonstrate the sub-linear convergence rate under the bounded delay assumption, as well as the faster convergence with more short-time iterations than a synchronous iterative scheme.
\par However, the performance of real asynchronous implementation of $N$-block PCPM algorithm is unknown, and thus in the future, more experiments (probably with much larger problem sizes) will be conducted on a multi-node computer cluster using MPI functions without blocking communication, such as MPI\_Isend and MPI\_Irecv. Also, the extension of the asynchronous $N$-block PCPM algorithm to solve $N$-block convex optimization problems with both linear and nonlinear constraints is worth to be explored.
%
%
%
%
\bibliographystyle{plain}
\bibliography{main}
%
%
%
%
\begin{appendix}
\section{Proofs in Section~\ref{sec: N-PCPM convergence}}
\subsection{Proof of Proposition~\ref{prop: distance}}\label{app: N-PCPM Proof_1}
We first prove the inequality \eqref{eq: N-PCPM Primal Distance}. From the primal minimization step \eqref{eq: N-block PCPM primal minimization}, we know that $(\mathbf{x}_1^{k+1}, \dots, \mathbf{x}_N^{k+1})$ is the unique proximal minimization point of the Lagrangian function evaluated at the predictor variable: $\mathcal{L}(\mathbf{x}_1, \dots, \mathbf{x}_N, \bm{\gamma}^{k+1}, \bm{\nu}^{k+1})$. Applying Lemma~\ref{lem: Proximal Minimization} with $\widehat{\mathbf{z}} = (\mathbf{x}_1^{k+1}, \dots, \mathbf{x}_N^{k+1})$, $\bar{\mathbf{z}} = (\mathbf{x}_1^k, \dots, \mathbf{x}_N^k)$ and $\mathbf{z} = (\mathbf{x}_1^*, \dots, \mathbf{x}_N^*)$, we have:
\begin{equation}
\begin{aligned}
&2 \rho\Big[\mathcal{L}(\mathbf{x}_1^{k+1}, \dots, \mathbf{x}_N^{k+1}, \bm{\gamma}^{k+1}, \bm{\nu}^{k+1}) - \mathcal{L}(\mathbf{x}_1^*, \dots, \mathbf{x}_N^*, \bm{\gamma}^{k+1}, \bm{\nu}^{k+1})\Big] \\
\leq &\sum_{i=1}^N \lVert \mathbf{x}_i^k - \mathbf{x}_i^* \rVert_2^2 - \sum_{i=1}^N \lVert \mathbf{x}_i^{k+1} - \mathbf{x}_i^* \rVert_2^2 - \sum_{i=1}^N \lVert \mathbf{x}_i^{k+1} - \mathbf{x}_i^k \rVert_2^2.
\end{aligned}
\end{equation}
Since $(\mathbf{x}_1^*, \dots, \mathbf{x}_N^*, \bm{\lambda}^*, \bm{\mu}^*)$ is a saddle point of the Lagrangian function $\mathcal{L}(\mathbf{x}_1, \dots, \mathbf{x}_N, \bm{\lambda}, \bm{\mu})$, i.e., $\mathcal{L}(\mathbf{x}_1^*, \dots, \mathbf{x}_N^*, \mathbf{p}^{k+1}) \leq \mathcal{L}(\mathbf{x}_1^*, \dots, \mathbf{x}_N^*, \bm{\lambda}^*) \leq \mathcal{L}(\mathbf{x}_1^{k+1}, \dots, \mathbf{x}_N^{k+1}, \bm{\lambda}^*)$, we have:
\begin{equation}
2 \rho\Big[\mathcal{L}(\mathbf{x}_1^*, \dots, \mathbf{x}_N^*, \bm{\gamma}^{k+1}, \bm{\nu}^{k+1}) - \mathcal{L}(\mathbf{x}_1^{k+1}, \dots, \mathbf{x}_N^{k+1}, \bm{\lambda}^*, \bm{\mu}^*)\Big] \leq 0.
\end{equation}
Adding the above two inequalities yields the inequality \eqref{eq: N-PCPM Primal Distance} in Proposition~\ref{prop: distance}.
\par To prove the second inequality, \eqref{eq: N-PCPM Dual Distance}, in Proposition~\ref{prop: distance}, we use a similar approach as above. By Lemma \ref{lem: Dual}, we know that $(\bm{\gamma}^{k+1}, \bm{\nu}^{k+1})$ is the unique proximal minimization point of the function $-\mathcal{L}(\mathbf{x}_1^k, \dots, \mathbf{x}_N^k, \bm{\lambda}, \bm{\mu})$. Applying Lemma~\ref{lem: Proximal Minimization} with $\widehat{\mathbf{z}} = (\bm{\gamma}^{k+1}, \bm{\nu}^{k+1})$, $\bar{\mathbf{z}} = (\bm{\lambda}^k, \bm{\mu}^k)$ and $\mathbf{z} = (\bm{\lambda}^{k+1}, \bm{\mu}^{k+1})$, we have:
\begingroup
\begin{align}
&2 \rho\Bigg\{\Big[-\mathcal{L}(\mathbf{x}_1^k, \dots, \mathbf{x}_N^k, \bm{\gamma}^{k+1}, \bm{\nu}^{k+1})\Big] - \Big[-\mathcal{L}(\mathbf{x}_1^k, \dots, \mathbf{x}_N^k, \bm{\lambda}^{k+1}, \bm{\mu}^{k+1})\Big]\Bigg\} \nonumber \\
\leq &\lVert \bm{\lambda}^k - \bm{\lambda}^{k+1} \rVert_2^2 + \lVert \bm{\mu}^k - \bm{\mu}^{k+1} \rVert_2^2 \nonumber \\
- &\lVert \bm{\gamma}^{k+1} - \bm{\lambda}^{k+1} \rVert_2^2 - \lVert \bm{\nu}^{k+1} - \bm{\mu}^{k+1} \rVert_2^2 - \lVert \bm{\gamma}^{k+1} - \bm{\lambda}^k \rVert_2^2 - \lVert \bm{\nu}^{k+1} - \bm{\mu}^k \rVert_2^2.
\end{align}
\endgroup
By Lemma \ref{lem: Dual}, we also know that $(\bm{\lambda}^{k+1}, \bm{\mu}^{k+1})$ is the unique proximal minimization point of the function $-\mathcal{L}(\mathbf{x}_1^{k+1}, \dots, \mathbf{x}_N^{k+1}, \bm{\lambda}, \bm{\mu})$. Applying Lemma~\ref{lem: Proximal Minimization} with $\widehat{\mathbf{z}} = (\bm{\lambda}^{k+1}, \bm{\mu}^{k+1})$, $\bar{\mathbf{z}} = (\bm{\lambda}^k, \bm{\mu}^k)$ and $\mathbf{z} = (\bm{\lambda}^*, \bm{\mu}^*)$, we have:
\begingroup
\begin{align}
&2 \rho\Bigg\{\Big[-\mathcal{L}(\mathbf{x}_1^{k+1}, \dots, \mathbf{x}_N^{k+1}, \bm{\lambda}^{k+1}, \bm{\mu}^{k+1})\Big] - \Big[-\mathcal{L}(\mathbf{x}_1^{k+1}, \dots, \mathbf{x}_N^{k+1}, \bm{\lambda}^*, \bm{\mu}^*)\Big]\Bigg\} \nonumber \\
\leq &\lVert \bm{\lambda}^k - \bm{\lambda}^* \rVert_2^2 + \lVert \bm{\mu}^k - \bm{\mu}^* \rVert_2^2 \nonumber \\
- &\lVert \bm{\lambda}^{k+1} - \bm{\lambda}^* \rVert_2^2 - \lVert \bm{\mu}^{k+1} - \bm{\mu}^* \rVert_2^2 - \lVert \bm{\lambda}^{k+1} - \bm{\lambda}^k \rVert_2^2 - \lVert \bm{\mu}^{k+1} - \bm{\mu}^k \rVert_2^2.
\end{align}
\endgroup
Adding the above two inequalities yields the inequality \eqref{eq: N-PCPM Dual Distance} in Proposition~\ref{prop: distance}.
\subsection{Proof of Theorem~\ref{thm: N-PCPM global convergence}}\label{app: N-PCPM Proof_2}
By adding the two inequalities \eqref{eq: N-PCPM Primal Distance} and \eqref{eq: N-PCPM Dual Distance} in Proposition~\ref{prop: distance}, we have:
\begingroup
\begin{align}
&\sum_{i=1}^N \lVert \mathbf{x}_i^{k+1} - \mathbf{x}_i^* \rVert_2^2 + \lVert \bm{\lambda}^{k+1} - \bm{\lambda}^* \rVert_2^2 + \lVert \bm{\mu}^{k+1} - \bm{\mu}^* \rVert_2^2 \nonumber \\
\leq &\sum_{i=1}^N \lVert \mathbf{x}_i^k - \mathbf{x}_i^* \rVert_2^2 + \lVert \bm{\lambda}^k - \bm{\lambda}^* \rVert_2^2 + \lVert \bm{\mu}^k - \bm{\mu}^* \rVert_2^2 \nonumber \\
- &\sum_{i=1}^N \lVert \mathbf{x}_i^{k+1} - \mathbf{x}_i^k \rVert_2^2 - \lVert \bm{\gamma}^{k+1} - \bm{\lambda}^{k+1} \rVert_2^2 - \lVert \bm{\nu}^{k+1} - \bm{\mu}^{k+1} \rVert_2^2 - \lVert \bm{\gamma}^{k+1} - \bm{\lambda}^k \rVert_2^2 - \lVert \bm{\nu}^{k+1} - \bm{\mu}^k \rVert_2^2 \nonumber \\
+ &\sum_{i=1}^N \underbrace{2 \rho (\bm{\lambda}^{k+1} - \bm{\gamma}^{k+1})^T A_i (\mathbf{x}_i^{k+1} - \mathbf{x}_i^k)}_{(a)_i} + \sum_{j=1}^M \sum_{i=1}^N \underbrace{2 \rho (\mu_j^{k+1} - \nu_j^{k+1})\Big[g_{ji}(\mathbf{x}_i^{k+1}) - g_{ji}(\mathbf{x}_i^k)\Big]}_{(b)_{ji}}. \label{eq: substitution}
\end{align}
\endgroup
Before we continue with the proof, we first show an extension of the Young's inequality\footnote{Young's inequality states that if $a$ and $b$ are two non-negative real numbers, and $p$ and $q$ are real numbers greater than 1 such that $\frac{1}{p} + \frac{1}{q} = 1$, then $ab < \frac{a^p}{p} + \frac{b^q}{q}$.} on vector products that will play a key role in the following proof.
\parindent=20pt Given any two vectors $\mathbf{z}_1, \mathbf{z}_2 \in \mathbb{R}^n$, we have that
$$
\mathbf{z}_1^T \mathbf{z}_2 = \sum_{j=1}^n z_{1j} z_{2j} = \sum_{j=1}^n \Big(\frac{1}{\delta} z_{1j}\Big) \Big(\delta z_{2j}\Big) \leq \sum_{j=1}^n \Big\lvert \frac{1}{\delta} z_{1j} \Big\rvert \Big\lvert \delta z_{2j} \Big\rvert,
$$
where $\delta$ is a non-zero real number. Applying Young's inequality on each summation term with $p = q = 2$, we obtain that 
\begin{equation}\label{eq: N-PCPM Young's Inequality}
\mathbf{z}_1^T \mathbf{z}_2 \leq \sum_{j=1}^n \left[\frac{1}{2} \Big(\frac{1}{\delta} z_{1j}\Big)^2 + \frac{1}{2} \Big(\delta z_{2j}\Big)^2\right] = \frac{1}{2 \delta^2} \lVert \mathbf{z}_1 \rVert_2^2 + \frac{\delta^2}{2} \lVert \mathbf{z}_2 \rVert_2^2.
\end{equation}
Applying \eqref{eq: N-PCPM Young's Inequality} on each term $(a)_i$ yields
\begin{equation}
\begin{aligned}
(a)_i \leq &2 \rho \Big[\frac{1}{2 \delta^2} \lVert \bm{\gamma}^{k+1} - \bm{\lambda}^{k+1} \rVert_2^2 + \frac{\delta^2}{2} \lVert A_i(\mathbf{x}_i^{k+1} - \mathbf{x}_i^k) \rVert_2^2\Big] \\
\leq &2 \rho \Big[\frac{1}{2 \delta^2} \lVert \bm{\gamma}^{k+1} - \bm{\lambda}^{k+1} \rVert_2^2 + \frac{\delta^2}{2} \lVert A_i \rVert_2^2 \lVert \mathbf{x}_i^{k+1} - \mathbf{x}_i^k \rVert_2^2\Big],
\end{aligned}
\end{equation}
and letting $\delta^2 = \frac{1}{\lVert A_i \rVert_2}$ yields
\begin{equation}\label{eq: (a)_i}
\begin{aligned}
(a)_i \leq &\rho \lVert A_i \rVert_2 \Big(\lVert \bm{\gamma}^{k+1} - \bm{\lambda}^{k+1} \rVert_2^2 + \lVert \mathbf{x}_i^{k+1} - \mathbf{x}_i^k \rVert_2^2\Big) \\
\leq &\rho A_{max} \Big(\lVert \bm{\gamma}^{k+1} - \bm{\lambda}^{k+1} \rVert_2^2 + \lVert \mathbf{x}_i^{k+1} - \mathbf{x}_i^k \rVert_2^2\Big).
\end{aligned}
\end{equation}
Applying \eqref{eq: N-PCPM Young's Inequality} on each term $(b)_{ji}$ yields
\begin{equation}
\begin{aligned}
(b)_{ji} \leq &2 \rho \Big[\frac{1}{2 \delta^2} \lVert \nu_j^{k+1} - \mu_j^{k+1} \rVert_2^2 + \frac{\delta^2}{2} \lVert g_{ji}(\mathbf{x}_i^{k+1}) - g_{ji}(\mathbf{x}_i^k) \rVert_2^2\Big] \\
\leq &2 \rho \Big[\frac{1}{2 \delta^2} \lVert \nu_j^{k+1} - \mu_j^{k+1} \rVert_2^2 + \frac{\delta^2}{2} L_{ji}^2 \lVert \mathbf{x}_i^{k+1} - \mathbf{x}_i^k \rVert_2^2\Big],
\end{aligned}
\end{equation}
and letting $\delta^2 = \frac{1}{L_{ji}}$ yields
\begin{equation}\label{eq: (b)_ji}
\begin{aligned}
(b)_{ji} \leq &\rho L_{ji} \Big(\lVert \nu_j^{k+1} - \mu_j^{k+1} \rVert_2^2 + \lVert \mathbf{x}_i^{k+1} - \mathbf{x}_i^k \rVert_2^2\Big) \\
\leq &\rho L_{max} \Big(\lVert \nu_j^{k+1} - \mu_j^{k+1} \rVert_2^2 + \lVert \mathbf{x}_i^{k+1} - \mathbf{x}_i^k \rVert_2^2\Big).
\end{aligned}
\end{equation}
Substituting \eqref{eq: (a)_i} and \eqref{eq: (b)_ji} into \eqref{eq: substitution} yields
\begingroup
\begin{align}
&\sum_{i=1}^N \lVert \mathbf{x}_i^{k+1} - \mathbf{x}_i^* \rVert_2^2 + \lVert \bm{\lambda}^{k+1} - \bm{\lambda}^* \rVert_2^2 + \lVert \bm{\mu}^{k+1} - \bm{\mu}^* \rVert_2^2 \nonumber \\
\leq &\sum_{i=1}^N \lVert \mathbf{x}_i^k - \mathbf{x}_i^* \rVert_2^2 + \lVert \bm{\lambda}^k - \bm{\lambda}^* \rVert_2^2 + \lVert \bm{\mu}^k - \bm{\mu}^* \rVert_2^2 \nonumber \\
- &(1 - \rho A_{max} - \rho M L_{max})\sum_{i=1}^N \lVert \mathbf{x}_i^{k+1} - \mathbf{x}_i^k \rVert_2^2 - (1 - \rho N A_{max})\lVert \bm{\gamma}^{k+1} - \bm{\lambda}^{k+1} \rVert_2^2 \nonumber \\
- &(1 - \rho N L_{max})\lVert \bm{\nu}^{k+1} - \bm{\mu}^{k+1} \rVert_2^2 - \lVert \bm{\gamma}^{k+1} - \bm{\lambda}^k \rVert_2^2 - \lVert \bm{\nu}^{k+1} - \bm{\mu}^k \rVert_2^2
\end{align}
\endgroup
Since $0 < \rho \leq \min\Big\{\frac{1 - \epsilon}{A_{max} + M L_{max}}, \frac{1 - \epsilon}{N A_{max}}, \frac{1 - \epsilon}{N L_{max}}\Big\}$, we have:
\begingroup
\begin{align}
&\sum_{i=1}^N \lVert \mathbf{x}_i^{k+1} - \mathbf{x}_i^* \rVert_2^2 + \lVert \bm{\lambda}^{k+1} - \bm{\lambda}^* \rVert_2^2 + \lVert \bm{\mu}^{k+1} - \bm{\mu}^* \rVert_2^2 \nonumber \\
\leq &\sum_{i=1}^N \lVert \mathbf{x}_i^k - \mathbf{x}_i^* \rVert_2^2 + \lVert \bm{\lambda}^k - \bm{\lambda}^* \rVert_2^2 + \lVert \bm{\mu}^k - \bm{\mu}^* \rVert_2^2 \nonumber \\
- &\epsilon \Big(\sum_{i=1}^N \lVert \mathbf{x}_i^{k+1} - \mathbf{x}_i^k \rVert_2^2 + \lVert \bm{\gamma}^{k+1} - \bm{\lambda}^{k+1} \rVert_2^2 + \lVert \bm{\nu}^{k+1} - \bm{\mu}^{k+1} \rVert_2^2 \nonumber \\
&\hspace*{15pt} + \lVert \bm{\gamma}^{k+1} - \bm{\lambda}^k \rVert_2^2 + \lVert \bm{\nu}^{k+1} - \bm{\mu}^k \rVert_2^2\Big). \label{eq: N-PCPM sequence inequality}
\end{align}
\endgroup
It implies that for all $k \geq 0$:
\begingroup
\begin{align}
0 \leq &\sum_{i=1}^N \lVert \mathbf{x}_i^{k+1} - \mathbf{x}_i^* \rVert_2^2 + \lVert \bm{\lambda}^{k+1} - \bm{\lambda}^* \rVert_2^2 + \lVert \bm{\mu}^{k+1} - \bm{\mu}^* \rVert_2^2 \nonumber \\
\leq &\sum_{i=1}^N \lVert \mathbf{x}_i^k - \mathbf{x}_i^* \rVert_2^2 + \lVert \bm{\lambda}^k - \bm{\lambda}^* \rVert_2^2 + \lVert \bm{\mu}^k - \bm{\mu}^* \rVert_2^2 \nonumber \\
\leq &\sum_{i=1}^N \lVert \mathbf{x}_i^{k-1} - \mathbf{x}_i^* \rVert_2^2 + \lVert \bm{\lambda}^{k-1} - \bm{\lambda}^* \rVert_2^2 + \lVert \bm{\mu}^{k-1} - \bm{\mu}^* \rVert_2^2 \nonumber \\
\leq &\cdots \leq \sum_{i=1}^N \lVert \mathbf{x}_i^0 - \mathbf{x}_i^* \rVert_2^2 + \lVert \bm{\lambda}^0 - \bm{\lambda}^* \rVert_2^2 + \lVert \bm{\mu}^0 - \bm{\mu}^* \rVert_2^2.
\end{align}
\endgroup
It further implies that the sequence $\Big\{\sum_{i=1}^N \lVert \mathbf{x}_i^k - \mathbf{x}_i^* \rVert_2^2 + \lVert \bm{\lambda}^k - \bm{\lambda}^* \rVert_2^2 + \lVert \bm{\mu}^k - \bm{\mu}^* \rVert_2^2\Big\}$ is monotonically decreasing and bounded below by $0$; hence the sequence must be convergent to a limit, denoted by $\xi$:
\begin{equation}\label{eq: N-PCPM bounded below sequence convergence}
\lim_{k \to +\infty} \sum_{i=1}^N \lVert \mathbf{x}_i^k - \mathbf{x}_i^* \rVert_2^2 + \lVert \bm{\lambda}^k - \bm{\lambda}^* \rVert_2^2 + \lVert \bm{\mu}^k - \bm{\mu}^* \rVert_2^2 = \xi.
\end{equation}
Taking the limit on both sides of \eqref{eq: N-PCPM sequence inequality} yields:
\begin{equation}\label{eq: 5 sequence convergence}
\begin{aligned}
&\lim_{k \to +\infty} \sum_{i=1}^N \lVert \mathbf{x}_i^{k+1} - \mathbf{x}_i^k \rVert_2^2 = 0, \\
&\lim_{k \to +\infty} \lVert \bm{\gamma}^{k+1} - \bm{\lambda}^{k+1} \rVert_2^2 = 0, \quad &\lim_{k \to +\infty} \lVert \bm{\nu}^{k+1} - \bm{\mu}^{k+1} \rVert_2^2 = 0, \\
&\lim_{k \to +\infty} \lVert \bm{\gamma}^{k+1} - \bm{\lambda}^k \rVert_2^2 = 0, \quad &\lim_{k \to +\infty} \lVert \bm{\nu}^{k+1} - \bm{\mu}^k \rVert_2^2 = 0.
\end{aligned}
\end{equation}
Additionally, \eqref{eq: N-PCPM bounded below sequence convergence} also implies that $\{(\mathbf{x}_1^k, \dots, \mathbf{x}_N^k, \bm{\lambda}^k, \bm{\mu}^k)\}$ is a bounded sequence, and thus there exists a sub-sequence $\{(\mathbf{x}_1^{k_j}, \dots, \mathbf{x}_N^{k_j}, \bm{\lambda}^{k_j}, \bm{\mu}^{k_j})\}$ that converges to a limit point $(\mathbf{x}_1^{\infty}, \dots, \mathbf{x}_N^{\infty}, \bm{\lambda}^{\infty}, \bm{\mu}^{\infty})$. We next show that the limit point is indeed a saddle point and is also the unique limit point of $\{(\mathbf{x}_1^k, \dots, \mathbf{x}_N^k, \bm{\lambda}^k, \bm{\mu}^k)\}$. Applying Lemma \ref{lem: Proximal Minimization} with $\widehat{\mathbf{z}} = (\mathbf{x}_1^{k+1}, \dots, \mathbf{x}_N^{k+1})$, $\bar{\mathbf{z}} = (\mathbf{x}_1^k, \dots, \mathbf{x}_N^k)$ and any $\mathbf{z} = (\mathbf{x}_1, \dots, \mathbf{x}_N) \in \prod_{i=1}^N \mathcal{X}_i$, we have:
\begingroup
\begin{align}
&2 \rho\Big[\mathcal{L}(\mathbf{x}_1^{k+1}, \dots, \mathbf{x}_N^{k+1}, \bm{\gamma}^{k+1}, \bm{\nu}^{k+1}) - \mathcal{L}(\mathbf{x}_1, \dots, \mathbf{x}_N, \bm{\gamma}^{k+1}, \bm{\nu}^{k+1})\Big] \nonumber \\
\leq &\sum_{i=1}^N \lVert \mathbf{x}_i^k - \mathbf{x}_i \rVert_2^2 - \sum_{i=1}^N \lVert \mathbf{x}_i^{k+1} - \mathbf{x}_i \rVert_2^2 - \sum_{i=1}^N \lVert \mathbf{x}_i^{k+1} - \mathbf{x}_i^k \rVert_2^2 \nonumber \\
\leq &\sum_{i=1}^N \big(\lVert \mathbf{x}_i^k - \mathbf{x}_i^{k+1} \rVert^2 + \lVert \mathbf{x}_i^{k+1} - \mathbf{x}_i \rVert^2\big) - \sum_{i=1}^N \lVert \mathbf{x}_i^{k+1} - \mathbf{x}_i \rVert^2 - \sum_{i=1}^N \lVert \mathbf{x}_i^{k+1} - \mathbf{x}_i^k \rVert^2 = 0 \nonumber \\
&\forall (\mathbf{x}_1, \dots, \mathbf{x}_N) \in \prod_{i=1}^N \mathcal{X}_i.
\end{align}
\endgroup
Taking the limits over an appropriate sub-sequence $\{k_j\}$ on both sides and using \eqref{eq: 5 sequence convergence}, we have:
\begin{equation}
\mathcal{L}(\mathbf{x}_1^{\infty}, \dots, \mathbf{x}_N^{\infty}, \bm{\lambda}^{\infty}, \bm{\mu}^{\infty}) \leq \mathcal{L}(\mathbf{x}_1, \dots, \mathbf{x}_N, \bm{\lambda}^{\infty}, \bm{\mu}^{\infty}), \quad \forall (\mathbf{x}_1, \dots, \mathbf{x}_N) \in \prod_{i=1}^N \mathcal{X}_i.
\end{equation}
Similarly, applying Lemma~\ref{lem: Proximal Minimization} with $\widehat{\mathbf{z}} = (\bm{\lambda}^{k+1}, \bm{\mu}^{k+1})$, $\bar{\mathbf{z}} = (\bm{\lambda}^k, \bm{\mu}^k)$ and any $\mathbf{z} = (\bm{\lambda}, \bm{\mu} \in \mathbb{R}_+^{m_2})$, we have:
\begingroup
\begin{align}
&2 \rho \left[\mathcal{L}(\mathbf{x}_1^{k+1}, \dots, \mathbf{x}_N^{k+1}, \bm{\lambda}, \bm{\mu}) - \mathcal{L}(\mathbf{x}_1^{k+1}, \dots, \mathbf{x}_N^{k+1}, \bm{\lambda}^{k+1}, \bm{\mu}^{k+1})\right] \nonumber \\
\leq &\lVert \bm{\lambda}^k - \bm{\lambda} \rVert^2 - \lVert \bm{\lambda}^{k+1} - \bm{\lambda} \rVert^2 - \lVert \bm{\lambda}^{k+1} - \bm{\lambda}^k \rVert^2 \nonumber \\
+ &\lVert \bm{\mu}^k - \bm{\mu} \rVert^2 - \lVert \bm{\mu}^{k+1} - \bm{\mu} \rVert^2 - \lVert \bm{\mu}^{k+1} - \bm{\mu}^k \rVert^2 \nonumber \\
\leq &\big(\lVert \bm{\lambda}^k - \bm{\lambda}^{k+1} \rVert^2 + \lVert \bm{\lambda}^{k+1} - \bm{\lambda} \rVert^2\big) - \lVert \bm{\lambda}^{k+1} - \bm{\lambda} \rVert^2 - \lVert \bm{\lambda}^{k+1} - \bm{\lambda}^k \rVert^2 \nonumber \\
+ &\big(\lVert \bm{\mu}^k - \bm{\mu}^{k+1} \rVert^2 + \lVert \bm{\mu}^{k+1} - \bm{\mu} \rVert^2\big) - \lVert \bm{\mu}^{k+1} - \bm{\mu} \rVert^2 - \lVert \bm{\mu}^{k+1} - \bm{\mu}^k \rVert^2 = 0, \quad \forall \bm{\mu} \in \mathbb{R}_+^{m_2}.
\end{align}
\endgroup
Taking the limits over an appropriate sub-sequence $\{k_j\}$ on both sides and using \eqref{eq: 5 sequence convergence}, we have:
\begin{equation}
\mathcal{L}(\mathbf{x}_1^{\infty}, \dots, \mathbf{x}_N^{\infty}, \bm{\lambda}, \bm{\mu}) \leq \mathcal{L}(\mathbf{x}_1^{\infty}, \dots, \mathbf{x}_N^{\infty}, \bm{\lambda}^{\infty}, \bm{\mu}^{\infty}), \quad \forall \bm{\mu} \in \mathbb{R}_+^{m_2}.
\end{equation}
Therefore, we show that $(\mathbf{x}_1^{\infty}, \dots, \mathbf{x}_N^{\infty}, \bm{\lambda}^{\infty}, \bm{\mu}^{\infty})$ is indeed a saddle point of the Lagrangian function $\mathcal{L}(\mathbf{x}_1, \dots, \mathbf{x}_N, \bm{\lambda}, \bm{\mu})$. Then \eqref{eq: N-PCPM bounded below sequence convergence} implies that
\begin{equation}
\lim_{k \to +\infty} \sum_{i=1}^N \lVert \mathbf{x}_i^k - \mathbf{x}_i^{\infty} \rVert_2^2 + \lVert \bm{\lambda}^k - \bm{\lambda}^{\infty} \rVert_2^2 + \lVert \bm{\mu}^k - \bm{\mu}^{\infty} \rVert_2^2 = \xi.
\end{equation}
Since we have already argued (after Eq. \eqref{eq: 5 sequence convergence}) that there exists a bounded sequence of $\{(\mathbf{x}_1^k, \dots, \mathbf{x}_N^k, \bm{\lambda}^k, \bm{\mu}^k)\}$ that converges to 0; that is, there exists $\{k_j\}$ such that 
$$
\lim_{k_j \to +\infty} \sum_{i=1}^N \lVert \mathbf{x}_i^{k_j} - \mathbf{x}_i^{\infty} \rVert_2^2 + \lVert \bm{\lambda}^{k_j} - \bm{\lambda}^{\infty} \rVert_2^2 + \lVert \bm{\mu}^{k_j} - \bm{\mu}^{\infty} \rVert_2^2 = 0,
$$
which then implies that $\xi = 0$. Therefore, we show that $\{(\mathbf{x}_1^k, \dots, \mathbf{x}_N^k, \bm{\lambda}^k, \bm{\mu}^k)\}$ converges globally to a saddle point $(\mathbf{x}_1^{\infty}, \dots, \mathbf{x}_N^{\infty}, \bm{\lambda}^{\infty}, \bm{\mu}^{\infty})$. 
\subsection{Proof of Theorem~\ref{thm: N-PCPM linear convergence}}\label{app: N-PCPM Proof_3}
Letting
\begin{equation}
\begin{aligned}
&\mathbf{u}_i^k = A_i^T(\bm{\lambda}^{k+1} - \bm{\gamma}^{k+1}) + \sum_{j=1}^{m_2}(\mu_j^{k+1} - \nu_j^{k+1}) \nabla_{\mathbf{x}_i} g_{ji}(\mathbf{x}_i^{k+1}) - \frac{1}{\rho} (\mathbf{x}_i^{k+1} - \mathbf{x}_i^k), \quad \forall i = 1 \dots N, \\
&\mathbf{v}^k = -\frac{1}{\rho} (\bm{\lambda}^{k+1} - \bm{\lambda}^k), \\
&\mathbf{w}^k = -\frac{1}{\rho} (\bm{\mu}^{k+1} - \bm{\mu}^k),
\end{aligned}
\end{equation}
we first show that $(\mathbf{x}_1^{k+1}, \dots, \mathbf{x}_N^{k+1}, \bm{\lambda}^{k+1}, \bm{\mu}^{k+1}) \in S^{-1}(\mathbf{u}_1^k, \dots, \mathbf{u}_N^k, \mathbf{v}^k, \mathbf{w}^k)$. By the primal minimization step \eqref{eq: N-block PCPM primal minimization}, we have, for all $i = 1 \dots N$:
\begin{equation}
-\nabla_{\mathbf{x}_i} f_i(\mathbf{x}_i^{k+1}) - \underbrace{\Big[A_i^T\bm{\gamma}^{k+1} + \sum_{j=1}^M \nu_j^{k+1} \nabla_{\mathbf{x}_i} g_{ji}(\mathbf{x}_i^{k+1}) + \frac{1}{\rho} (\mathbf{x}_i^{k+1} - \mathbf{x}_i^k)\Big]}_{\Delta_{u_i}} \in \mathcal{N}_{\mathcal{X}_i}(\mathbf{x}_i^{k+1}),
\end{equation}
where $\mathcal{N}_{\mathcal{X}_i}(\mathbf{x}^{k+1}) \coloneqq \{\mathbf{y} \in \mathbb{R}^{n_1} | \mathbf{y}^T (\mathbf{x} - \mathbf{x}^{k+1}) \leq \mathbf{0}, \forall \mathbf{x} \in \mathcal{X}_i\}$ denotes the normal cone to the set $\mathcal{X}_i$ at the point $\mathbf{x}_i^{k+1}$ for all $i = 1, \dots, N$. Plugging 
$$
\Delta_{u_i} = A_i^T \bm{\lambda}^{k+1} + \sum_{j=1}^M \mu_j^{k+1} \nabla_{\mathbf{x}_i}g_{ji}(\mathbf{x}_i^{k+1}) - \mathbf{u}_i^k
$$
into the above expression, we have that, for all $i = 1, \dots, N$:
\begin{equation}
-\nabla_{\mathbf{x}_i} f_i(\mathbf{x}_i^{k+1}) - A_i^T\bm{\lambda}^{k+1} - \sum_{j=1}^M \mu_j^{k+1} \nabla_{\mathbf{x}_i} g_{ji}(\mathbf{x}_i^{k+1}) + \mathbf{u}_i^k \in \mathcal{N}_{\mathcal{X}_i}(\mathbf{x}_i^{k+1}),
\end{equation}
which implies 
\begin{equation}\label{eq: first-order optimality 1}
\begin{aligned}
&(\mathbf{x}_1^{k+1}, \dots, \mathbf{x}_N^{k+1}) \\
\in &\underset{(\mathbf{x}_1, \dots, \mathbf{x}_N) \in \prod_{i=1}^N \mathcal{X}_i}{\argmin} \mathcal{L}(\mathbf{x}_1, \dots, \mathbf{x}_N, \bm{\lambda}^{k+1}, \bm{\mu}^{k+1}) - \sum_{i=1}^N \mathbf{x}_i^T \mathbf{u}_i^k + (\bm{\lambda}^{k+1})^T \mathbf{v}^k + (\bm{\mu}^{k+1})^T \mathbf{w}^k.
\end{aligned}
\end{equation}
Similarly, by the interpretation of $(\bm{\lambda}^{k+1}, \bm{\mu}^{k+1})$ in Lemma~\ref{lem: Dual}, we have:
\begin{equation}
\begin{aligned}
&\nabla_{\bm{\lambda}} \mathcal{L}(\mathbf{x}_1^{k+1}, \dots, \mathbf{x}_N^{k+1}, \bm{\lambda}^{k+1}, \bm{\mu}^{k+1}) + \underbrace{\Big[-\frac{1}{\rho} (\bm{\lambda}^{k+1} - \bm{\lambda}^k)\Big]}_{\mathbf{v}^k} = \mathbf{0}, \\[-6pt]
&\nabla_{\bm{\mu}} \mathcal{L}(\mathbf{x}_1^{k+1}, \dots, \mathbf{x}_N^{k+1}, \bm{\lambda}^{k+1}, \bm{\mu}^{k+1}) + \underbrace{\Big[-\frac{1}{\rho} (\bm{\mu}^{k+1} - \bm{\mu}^k)\Big]}_{\mathbf{w}^k} \in \mathcal{N}_{\mathbb{R}_+^{m_2}}(\bm{\mu}^{k+1}),
\end{aligned}
\end{equation}
which imply
\begin{equation}\label{eq: first-order optimality 2}
\begin{aligned}
&(\bm{\lambda}^{k+1}, \bm{\mu}^{k+1}) \in \underset{\bm{\lambda} \in \mathbb{R}^m, \bm{\mu} \in \mathbb{R}^M}{\argmax} \mathcal{L}(\mathbf{x}_1^{k+1}, \dots, \mathbf{x}_N^{k+1}, \bm{\lambda}, \bm{\mu}) - \sum_{i=1}^N (\mathbf{x}_i^{k+1})^T \mathbf{u}_i^k + \bm{\lambda}^T \mathbf{v}^k + \bm{\mu}^T \mathbf{w}^k.
\end{aligned}
\end{equation}
The first-order optimality conditions \eqref{eq: first-order optimality 1} and \eqref{eq: first-order optimality 2} together imply that 
$$
(\mathbf{x}_1^{k+1}, \dots, \mathbf{x}_N^{k+1}, \bm{\lambda}^{k+1}, \bm{\mu}^{k+1}) \in S^{-1}(\mathbf{u}_1^k, \dots, \mathbf{u}_N^k, \mathbf{v}^k, \mathbf{w}^k).
$$ \\
By \eqref{eq: 5 sequence convergence}, we have $\lim_{k \to \infty} (\mathbf{u}_1^k, \dots, \mathbf{u}_N^k, \mathbf{v}^k, \mathbf{w}^k) \to \mathbf{0}$. Choose in integer $\bar{k}$ such that, for all $k \geq \bar{k}$, $\lVert (\mathbf{u}_1^k, \dots, \mathbf{u}_N^k, \mathbf{v}^k, \mathbf{w}^k) \rVert_2 \leq \tau$, then by Assumption~\ref{assp: lip_inverse_mapping}, we have:
\begingroup
\begin{align}
&\sum_{i=1}^N \lVert \mathbf{x}_i^{k+1} - \mathbf{x}_i^* \rVert_2^2 + \lVert \bm{\lambda}^{k+1} - \bm{\lambda}^* \rVert_2^2 + \lVert \bm{\mu}^{k+1} - \bm{\mu}^* \rVert_2^2 \nonumber \\
\leq &a^2 \Big(\sum_{i=1}^N \lVert \mathbf{u}_i^k \rVert_2^2 + \lVert \mathbf{v}^k \rVert_2^2 + \lVert \mathbf{w}^k \rVert_2^2\Big) \nonumber \\
\leq &a^2 \Big(N A_{max}^2 \lVert \bm{\gamma}^{k+1} - \bm{\lambda}^{k+1} \rVert_2^2 + N L_{max}^2 \lVert \bm{\nu}^{k+1} - \bm{\mu}^{k+1} \rVert_2^2 + \frac{1}{\rho^2} \sum_{i=1}^N \lVert \mathbf{x}_i^{k+1} - \mathbf{x}_i^k \rVert_2^2 \nonumber \\
&\hspace*{15 pt}+ \frac{1}{\rho^2} \lVert \bm{\lambda}^{k+1} - \bm{\lambda}^k \rVert_2^2 + \frac{1}{\rho^2} \lVert \bm{\mu}^{k+1} - \bm{\mu}^k \rVert_2^2\Big) \nonumber \\
\leq &a^2 \Bigg[\frac{1}{\rho^2} \sum_{i=1}^N \lVert \mathbf{x}_i^{k+1} - \mathbf{x}_i^k \rVert_2^2 + N A_{max}^2 \lVert \bm{\gamma}^{k+1} - \bm{\lambda}^{k+1} \rVert_2^2 + N L_{max}^2 \lVert \bm{\nu}^{k+1} - \bm{\mu}^{k+1} \rVert_2^2 \nonumber \\
&\hspace*{15 pt}+ \frac{1}{\rho^2} \Big(\lVert \bm{\lambda}^{k+1} - \bm{\gamma}^{k+1} \rVert_2^2 + \lVert \bm{\gamma}^{k+1} - \bm{\lambda}^k \rVert_2^2\Big) \nonumber \\
&\hspace*{15 pt}+ \frac{1}{\rho^2} \Big(\lVert \bm{\mu}^{k+1} - \bm{\nu}^{k+1} \rVert_2^2 + \lVert \bm{\nu}^{k+1} - \bm{\mu}^k \rVert_2^2\Big)\Bigg] \nonumber \\
\leq &a^2 \Bigg[\frac{1}{\rho^2} \sum_{i=1}^N \lVert \mathbf{x}_i^{k+1} - \mathbf{x}_i^k \rVert_2^2 \nonumber \\
&\hspace*{15 pt}+ (N A_{max}^2 + \frac{1}{\rho^2}) \lVert \bm{\gamma}^{k+1} - \bm{\lambda}^{k+1} \rVert_2^2 + (N L_{max}^2 + \frac{1}{\rho^2}) \lVert \bm{\nu}^{k+1} - \bm{\mu}^{k+1} \rVert_2^2 \nonumber \\
&\hspace*{15 pt}+ \frac{1}{\rho^2} \lVert \bm{\gamma}^{k+1} - \bm{\lambda}^k \rVert_2^2 + \frac{1}{\rho^2} \lVert \bm{\nu}^{k+1} - \bm{\mu}^k \rVert_2^2 \Bigg] \nonumber \\
\leq &a^2(N \alpha^2 + \frac{1}{\rho^2}) \Big(\sum_{i=1}^N \lVert \mathbf{x}_i^{k+1} - \mathbf{x}_i^k \rVert_2^2 + \lVert \bm{\gamma}^{k+1} - \bm{\lambda}^{k+1} \rVert_2^2 + \lVert \bm{\nu}^{k+1} - \bm{\mu}^{k+1} \rVert_2^2 \nonumber \\
&\hspace*{80pt}+ \lVert \bm{\gamma}^{k+1} - \bm{\lambda}^k \rVert_2^2 + \lVert \bm{\nu}^{k+1} - \bm{\mu}^k \rVert_2^2\Big) \nonumber \\
\leq &\frac{a^2(N \alpha^2 + \frac{1}{\rho^2})}{\epsilon}\Bigg[\Big(\sum_{i=1}^N \lVert \mathbf{x}_i^k - \mathbf{x}_i^* \rVert_2^2 + \lVert \bm{\lambda}^k - \bm{\lambda}^* \rVert_2^2 + \lVert \bm{\mu}^k - \bm{\mu}^* \rVert_2^2\Big) \nonumber \\
&\hspace*{65pt}- \Big(\sum_{i=1}^N \lVert \mathbf{x}_i^{k+1} - \mathbf{x}_i^* \rVert_2^2 + \lVert \bm{\lambda}^{k+1} - \bm{\lambda}^* \rVert_2^2 + \lVert \bm{\mu}^{k+1} - \bm{\mu}^* \rVert_2^2\Big)\Bigg].
\end{align}
\endgroup
The last inequality is due to \eqref{eq: N-PCPM sequence inequality}, and $\alpha \coloneqq \max\{A_{max}, L_{max}\}$. We further derive
\begin{equation}
\begin{aligned}
&\sum_{i=1}^N \lVert \mathbf{x}_i^{k+1} - \mathbf{x}_i^* \rVert_2^2 + \lVert \bm{\lambda}^{k+1} - \bm{\lambda}^* \rVert_2^2 + \lVert \bm{\mu}^{k+1} - \bm{\mu}^* \rVert_2^2 \\
\leq &\theta^2 \Big(\sum_{i=1}^N \lVert \mathbf{x}_i^k - \mathbf{x}_i^* \rVert_2^2 + \lVert \bm{\lambda}^k - \bm{\lambda}^* \rVert_2^2 + \lVert \bm{\mu}^k - \bm{\mu}^* \rVert_2^2\Big),
\end{aligned}
\end{equation}
where $\theta = \sqrt{\frac{1}{1 + \beta}} < 1$ and $\beta = \frac{\epsilon}{a^2 (N \alpha^2 + \frac{1}{\rho^2})} > 0$.
\section{Proofs in Section~\ref{sec: AN-PCPM convergence}}
\subsection{Proof of Theorem~\ref{thm: sub-linear convergence}}\label{app: AN-PCPM Proof_1}
For all $k \geq 0$, we can equivalently write the update steps in Algorithm \ref{alg:AN-PCPM-M} and Algorithm \ref{alg:AN-PCPM-W} as
\begin{equation}
\mathbf{x}_i^{k+1} = 
\left\{
\begin{aligned}
&\underset{\mathbf{x}_i \in \mathcal{X}_i}{\argmin} f_i(\mathbf{x}_i) + (2\bm{\lambda}^{\hat{k}_i+1} - \bm{\lambda}^{\hat{k}_i})^T A_i \mathbf{x}_i + \frac{1}{2 \rho} \lVert \mathbf{x}_i - \mathbf{x}_i^{\hat{k}_i+1} \rVert^2, &\forall i \in \mathcal{A}_k \\
&\mathbf{x}_i^k, &\forall i \in \mathcal{A}_k^{\complement}
\end{aligned}
\right.,
\end{equation}
\begin{equation}
\bm{\lambda}^{k+1} = \bm{\lambda}^k + \rho \big(\sum_{i=1}^N A_i \mathbf{x}_i^{k+1} - \mathbf{b}\big),
\end{equation}
\begin{equation}
\hat{\bm{\gamma}} = \bm{\lambda}^{k+1} + \rho \big(\sum_{i=1}^N A_i \mathbf{x}_i^{k+1} - \mathbf{b}\big) = 2 \bm{\lambda}^{k+1} - \bm{\lambda}^k,
\end{equation}
where $\hat{k}_i$ is the last iteration when the main processor receives $\widehat{\mathbf{x}}_i$ from the worker processor $i \in \mathcal{A}_k$ before iteration $k$. For each worker processor $i \in \mathcal{A}_k^{\complement}$, we denote $\bar{k}_i \in (k-\tau, k)$ as the last iteration when the main processor receives $\hat{\mathbf{x}}_i$ from the worker processor $i$ before iteration $k$, and further denote $\bar{\bar{k}}_i \in [\bar{k}_i-\tau, \bar{k}_i)$ as the last iteration when the main processor receives $\widehat{\mathbf{x}}_i$ from the worker processor $i$ before iteration $\bar{k}_i$. We can rewrite the primal minimization step as
\begin{equation}
\mathbf{x}_i^{k+1} = 
\left\{
\begin{aligned}
&\arg \underset{\mathbf{x}_i}{\min} f_i(\mathbf{x}_i) + (2\bm{\lambda}^{\hat{k}_i+1} - \bm{\lambda}^{\hat{k}_i})^T A_i \mathbf{x}_i + \frac{1}{2 \rho} \lVert \mathbf{x}_i - \mathbf{x}_i^{\hat{k}_i+1} \rVert^2, &\forall i \in \mathcal{A}_k \\
\mathbf{x}_i^{\bar{k}_i+1} = &\arg \underset{\mathbf{x}_i}{\min} f_i(\mathbf{x}_i) + (2\bm{\lambda}^{\bar{\bar{k}}_i+1} - \bm{\lambda}^{\bar{\bar{k}}_i})^T A_i \mathbf{x}_i + \frac{1}{2 \rho} \lVert \mathbf{x}_i - \mathbf{x}_i^{\bar{\bar{k}}_i+1} \rVert^2, &\forall i \in \mathcal{A}_k^{\complement}
\end{aligned}
\right..
\end{equation}
At each iteration $k \geq 0$, for any $i \in \mathcal{A}_k$, applying Lemma~\ref{lem: Proximal Minimization Strong Convexity} with $\widehat{\mathbf{z}} = \mathbf{x}_i^{k+1}$, $\bar{\mathbf{z}} = \mathbf{x}_i^{\hat{k}_i+1}$, and $\mathbf{z} = \mathbf{x}_i^*$, we have:
\begingroup
\begin{align}
&f_i(\mathbf{x}_i^{k+1}) - f_i(\mathbf{x}_i^*) + \bm{\lambda}^T \big(A_i \mathbf{x}_i^{k+1} - A_i \mathbf{x}_i^*\big) + (\frac{\sigma_i}{2} + \frac{1}{2 \rho}) \lVert \mathbf{x}_i^{k+1} - \mathbf{x}_i^* \rVert_2^2 \nonumber \\
+ &\frac{1}{2 \rho} \lVert \mathbf{x}_i^{k+1} - \mathbf{x}_i^{\hat{k}_i+1} \rVert_2^2 - \frac{1}{2 \rho} \lVert \mathbf{x}_i^{\hat{k}_i+1} - \mathbf{x}_i^* \rVert_2^2 \nonumber \\
+ &(\bm{\lambda}^{k+1} - \bm{\lambda})^T \big(A_i \mathbf{x}_i^{k+1} - A_i \mathbf{x}_i^*\big) + (2\bm{\lambda}^{\hat{k}_i+1} - \bm{\lambda}^{\hat{k}_i} - \bm{\lambda}^{k+1})^T \big(A_i \mathbf{x}_i^{k+1} - A_i \mathbf{x}_i^*\big) \leq 0. \label{eq: inequality arrive}
\end{align}
\endgroup
At each iteration $k \geq 0$, for any $i \in \mathcal{A}_k^{\complement}$, applying Lemma~\ref{lem: Proximal Minimization Strong Convexity} with $\widehat{\mathbf{z}} = \mathbf{x}_i^{k+1}$, $\bar{\mathbf{z}} = \mathbf{x}_i^{\bar{\bar{k}}_i+1}$, and $\mathbf{z} = \mathbf{x}_i^*$, we have:
\begingroup
\begin{align}
&f_i(\mathbf{x}_i^{k+1}) - f_i(\mathbf{x}_i^*) + \bm{\lambda}^T \big(A_i \mathbf{x}_i^{k+1} - A_i \mathbf{x}_i^*\big) + (\frac{\sigma_i}{2} + \frac{1}{2 \rho}) \lVert \mathbf{x}_i^{k+1} - \mathbf{x}_i^* \rVert_2^2 \nonumber \\
+ &\frac{1}{2 \rho} \lVert \mathbf{x}_i^{k+1} - \mathbf{x}_i^{\bar{\bar{k}}_i+1} \rVert_2^2  - \frac{1}{2 \rho} \lVert \mathbf{x}_i^{\bar{\bar{k}}_i+1} - \mathbf{x}_i^* \rVert_2^2 \nonumber \\
+ &(\bm{\lambda}^{k+1} - \bm{\lambda})^T \big(A_i \mathbf{x}_i^{k+1} - A_i \mathbf{x}_i^*\big) + (2\bm{\lambda}^{\bar{\bar{k}}_i+1} - \bm{\lambda}^{\bar{\bar{k}}_i} - \bm{\lambda}^{k+1})^T \big(A_i \mathbf{x}_i^{k+1} - A_i \mathbf{x}_i^*\big) \leq 0. \label{eq: inequality not arrive}
\end{align}
\endgroup
Summing \eqref{eq: inequality arrive} over all $i \in \mathcal{A}_k$ and \eqref{eq: inequality not arrive} over all $i \in \mathcal{A}_k^{\complement}$ yields
\begingroup
\allowdisplaybreaks
\begin{align}
&\sum_{i=1}^N f_i(\mathbf{x}_i^{k+1}) - \sum_{i=1}^N f_i(\mathbf{x}_i^*) + \underbrace{\bm{\lambda}^T \sum_{i=1}^N \big(A_i \mathbf{x}_i^{k+1} - A_i \mathbf{x}_i^*\big)}_\text{(a)} + \frac{\sigma_{min}}{2} \sum_{i=1}^N \lVert \mathbf{x}_i^{k+1} - \mathbf{x}_i^* \rVert_2^2 \nonumber \\
+ &\underbrace{\frac{1}{2 \rho} \sum_{i=1}^N \lVert \mathbf{x}_i^{k+1} - \mathbf{x}_i^* \rVert_2^2}_\text{(b)} \nonumber \\
+ &\underbrace{\frac{1}{2 \rho} \Big[\sum_{i \in \mathcal{A}_k} \big(\lVert \mathbf{x}_i^{k+1} - \mathbf{x}_i^{\hat{k}_i+1} \rVert_2^2 - \lVert \mathbf{x}_i^{\hat{k}_i+1} - \mathbf{x}_i^* \rVert_2^2\big) + \sum_{i \in \mathcal{A}_k^{\complement}} \big(\lVert \mathbf{x}_i^{k+1} - \mathbf{x}_i^{\bar{\bar{k}}_i+1} \rVert_2^2 - \lVert \mathbf{x}_i^{\bar{\bar{k}}_i+1} - \mathbf{x}_i^* \rVert_2^2\big)\Big]}_\text{(c)} \nonumber \\
+ &\underbrace{(\bm{\lambda}^{k+1} - \bm{\lambda})^T \sum_{i=1}^N \big(A_i \mathbf{x}_i^{k+1} - A_i \mathbf{x}_i^*\big)}_\text{(d)} \nonumber \\
+ &\sum_{i \in \mathcal{A}_k} (2\bm{\lambda}^{\hat{k}_i+1} - \bm{\lambda}^{\hat{k}_i} - \bm{\lambda}^{k+1})^T \big(A_i \mathbf{x}_i^{k+1} - A_i \mathbf{x}_i^*\big) \nonumber \\
+ &\sum_{i \in \mathcal{A}_k^{\complement}} (2\bm{\lambda}^{\bar{\bar{k}}_i+1} - \bm{\lambda}^{\bar{\bar{k}}_i} - \bm{\lambda}^{k+1})^T \big(A_i \mathbf{x}_i^{k+1} - A_i \mathbf{x}_i^*\big) \nonumber \\
&\leq 0. \label{eq: AN-PCPM Big Inequality}
\end{align}
\endgroup
The term (a) can be rewritten as:
\begin{equation}\label{eq: AN-PCPM (a)}
(a) = \bm{\lambda}^T \big(\sum_{i=1}^N A_i \mathbf{x}_i^{k+1} - \sum_{i=1}^N A_i \mathbf{x}_i^*\big) = \bm{\lambda}^T \Big(\sum_{i=1}^N A_i \mathbf{x}_i^{k+1} - \mathbf{b}\Big).
\end{equation}
The term (b) + (c) can be rewritten as:
\begingroup
\begin{align}
(b) + (c) = &\frac{1}{2 \rho} \sum_{i \in \mathcal{A}_k} \big(\lVert \mathbf{x}_i^{k+1} - \mathbf{x}_i^* \rVert_2^2 + \lVert \mathbf{x}_i^{k+1} - \mathbf{x}_i^{\hat{k}_i+1} \rVert_2^2 - \lVert \mathbf{x}_i^{\hat{k}_i+1} - \mathbf{x}_i^* \rVert_2^2\big) \nonumber \\
+ &\frac{1}{2 \rho} \sum_{i \in \mathcal{A}_k^{\complement}} \big(\lVert \mathbf{x}_i^{k+1} - \mathbf{x}_i^* \rVert_2^2 + \lVert \mathbf{x}_i^{k+1} - \mathbf{x}_i^{\bar{\bar{k}}_i+1} \rVert_2^2 - \lVert \mathbf{x}_i^{\bar{\bar{k}}_i+1} - \mathbf{x}_i^* \rVert_2^2\big) \nonumber \\
& \geq 0. \label{eq: AN-PCPM (b)+(c)}
\end{align}
\endgroup
The term (d) can be rewritten as:
\begin{equation}\label{eq: AN-PCPM (d)}
(d) = (\bm{\lambda}^{k+1} - \bm{\lambda})^T \big(\sum_{i=1}^N A_i \mathbf{x}_i^{k+1} - \sum_{i=1}^N A_i \mathbf{x}_i^*\big) = \frac{1}{\rho} (\bm{\lambda}^{k+1} - \bm{\lambda})^T (\bm{\lambda}^{k+1} - \bm{\lambda}^k).
\end{equation}
We substitute \eqref{eq: AN-PCPM (a)}, \eqref{eq: AN-PCPM (b)+(c)} and \eqref{eq: AN-PCPM (d)} into \eqref{eq: AN-PCPM Big Inequality}, and sum it over $k = 0 \dots K-1$. Taking the average yields
\begingroup
\begin{align}
&\frac{1}{K} \sum_{k=0}^{K-1} \sum_{i=1}^N f_i(\mathbf{x}_i^{k+1}) - \sum_{i=1}^N f_i(\mathbf{x}_i^*) + \frac{1}{K} \bm{\lambda}^T \Big(\sum_{k=0}^{K-1} \sum_{i=1}^N A_i \mathbf{x}_i^{k+1} - \mathbf{b}\Big) \nonumber \\
\leq - &\frac{\sigma_{min}}{2 K} \sum_{k=0}^{K-1} \sum_{i=1}^N \lVert \mathbf{x}_i^{k+1} - \mathbf{x}_i^* \rVert_2^2 \nonumber \\
- &\frac{1}{\rho K} \underbrace{\sum_{k=0}^{K-1} (\bm{\lambda}^{k+1} - \bm{\lambda})^T (\bm{\lambda}^{k+1} - \bm{\lambda}^k)}_\text{(e)} \nonumber \\
+ &\frac{1}{K} \underbrace{\sum_{k=0}^{K-1} \sum_{i \in \mathcal{A}_k} (\bm{\lambda}^{\hat{k}_i} + \bm{\lambda}^{k+1} - 2\bm{\lambda}^{\hat{k}_i+1})^T \big(A_i \mathbf{x}_i^{k+1} - A_i \mathbf{x}_i^*\big)}_\text{(f)} \nonumber \\
+ &\frac{1}{K} \underbrace{\sum_{k=0}^{K-1} \sum_{i \in \mathcal{A}_k^{\complement}} (\bm{\lambda}^{\bar{\bar{k}}_i} + \bm{\lambda}^{k+1} - 2\bm{\lambda}^{\bar{\bar{k}}_i+1})^T \big(A_i \mathbf{x}_i^{k+1} - A_i \mathbf{x}_i^*\big)}_\text{(g)}. \label{eq: AN-PCPM Average Inequality}
\end{align}
\endgroup
The term (e) in \eqref{eq: AN-PCPM Average Inequality} can be rewritten as:
\begin{equation}\label{eq: AN-PCPM (e)}
\begin{aligned}
(e) = &\frac{1}{2} \sum_{k=0}^{k-1} \big(\lVert \bm{\lambda}^{k+1} - \bm{\lambda} \rVert_2^2 - \lVert \bm{\lambda} - \bm{\lambda}^k \rVert_2^2 + \lVert \bm{\lambda}^{k+1} - \bm{\lambda}^k \rVert_2^2\big) \\
= &\frac{1}{2} \lVert \bm{\lambda}^K - \bm{\lambda} \rVert_2^2 - \frac{1}{2} \lVert \bm{\lambda}^0 - \bm{\lambda} \rVert_2^2 + \frac{1}{2} \sum_{k=0}^{K-1} \lVert \bm{\lambda}^{k+1} - \bm{\lambda}^k \rVert_2^2.
\end{aligned}
\end{equation}
The term (f) in \eqref{eq: AN-PCPM Average Inequality} can be bounded as:
\begingroup
\begin{align}
&\sum_{k=0}^{K-1} \sum_{i \in \mathcal{A}_k} (\bm{\lambda}^{\hat{k}_i} + \bm{\lambda}^{k+1} - 2\bm{\lambda}^{\hat{k}_i+1})^T \big(A_i \mathbf{x}_i^{k+1} - A_i \mathbf{x}_i^*\big) \nonumber \\
=&\sum_{k=0}^{K-1} \sum_{i \in \mathcal{A}_k} (\bm{\lambda}^{\hat{k}_i} - \bm{\lambda}^{\hat{k}_i+1})^T \big(A_i \mathbf{x}_i^{k+1} - A_i \mathbf{x}_i^*\big) + \sum_{k=0}^{K-1} \sum_{i \in \mathcal{A}_k} (\bm{\lambda}^{k+1} - \bm{\lambda}^{\hat{k}_i+1})^T \big(A_i \mathbf{x}_i^{k+1} - A_i \mathbf{x}_i^*\big) \nonumber \\
= &\sum_{k=0}^{K-1} \sum_{i \in \mathcal{A}_k} \sum_{l=\hat{k}_i}^{\hat{k}_i} (\bm{\lambda}^l - \bm{\lambda}^{l+1})^T \big(A_i \mathbf{x}_i^{k+1} - A_i \mathbf{x}_i^*\big) \nonumber \\
+ &\sum_{k=0}^{K-1} \sum_{i \in \mathcal{A}_k} \sum_{l=\hat{k}_i+1}^k (\bm{\lambda}^{l+1} - \bm{\lambda}^l)^T \big(A_i \mathbf{x}_i^{k+1} - A_i \mathbf{x}_i^*\big) \nonumber \\
\leq &\sum_{i \in \mathcal{A}_k} \sum_{k=0}^{K-1} \sum_{l=\hat{k}_i}^{\hat{k}_i} \Big(\frac{1}{2 \delta^2}\lVert \bm{\lambda}^l - \bm{\lambda}^{l+1} \rVert_2^2 + \frac{\delta^2 \lVert A_i \rVert_2^2}{2}\lVert \mathbf{x}_i^{k+1} - \mathbf{x}_i^* \rVert_2^2\Big) \nonumber \\
+ &\sum_{i \in \mathcal{A}_k} \sum_{k=0}^{K-1} \sum_{l=\hat{k}_i+1}^k \Big(\frac{1}{2 \delta^2}\lVert \bm{\lambda}^{l+1} - \bm{\lambda}^l \rVert_2^2 + \frac{\delta^2 \lVert A_i \rVert_2^2}{2}\lVert \mathbf{x}_i^{k+1} - \mathbf{x}_i^* \rVert_2^2\Big) \nonumber \\
\leq &\sum_{i=1}^N \sum_{k=0}^{K-1} (\tau - 1) \Big(\frac{1}{2 \delta^2}\lVert \bm{\lambda}^k - \bm{\lambda}^{k+1} \rVert_2^2 + \frac{\delta^2 \lVert A_i \rVert_2^2}{2}\lVert \mathbf{x}_i^{k+1} - \mathbf{x}_i^* \rVert_2^2\Big) \nonumber \\
+ &\sum_{i=1}^N \sum_{k=0}^{K-1} (\tau - 1) \Big(\frac{1}{2 \delta^2}\lVert \bm{\lambda}^{k+1} - \bm{\lambda}^k \rVert_2^2 + \frac{\delta^2 \lVert A_i \rVert_2^2}{2}\lVert \mathbf{x}_i^{k+1} - \mathbf{x}_i^* \rVert_2^2\Big) \nonumber \\
\leq &\frac{(\tau - 1)N}{\delta^2} \sum_{k=0}^{K-1} \lVert \bm{\lambda}^{k+1} - \bm{\lambda}^k \rVert_2^2 + (\tau - 1) \delta^2 A_{\max}^2 \sum_{k=0}^{K-1} \sum_{i=1}^N \lVert \mathbf{x}_i^{k+1} - \mathbf{x}_i^* \rVert_2^2, \label{eq: AN-PCPM (f)}
\end{align}
\endgroup
where the first inequality is obtained by \eqref{eq: N-PCPM Young's Inequality}, and the second inequality is due to the fact that the term $\lVert \bm{\lambda}^{k+1} - \bm{\lambda}^k \rVert_2^2$ does not appear more than $\tau - 1$ times for each iteration $k$.
\par Similarly, the term (g) in \eqref{eq: AN-PCPM Average Inequality} can be bounded as:
\begingroup
\begin{align}
&\sum_{k=0}^{K-1} \sum_{i \in \mathcal{A}_k^{\complement}} (\bm{\lambda}^{\bar{\bar{k}}_i} + \bm{\lambda}^{k+1} - 2\bm{\lambda}^{\bar{\bar{k}}_i+1})^T \big(A_i \mathbf{x}_i^{k+1} - A_i \mathbf{x}_i^*\big) \nonumber \\
= &\sum_{k=0}^{K-1} \sum_{i \in \mathcal{A}_k^{\complement}} (\bm{\lambda}^{\bar{\bar{k}}_i} - \bm{\lambda}^{\bar{\bar{k}}_i+1})^T \big(A_i \mathbf{x}_i^{k+1} - A_i \mathbf{x}_i^*\big) + \sum_{k=0}^{K-1} \sum_{i \in \mathcal{A}_k^{\complement}} (\bm{\lambda}^{\bar{k}_i+1} - \bm{\lambda}^{\bar{\bar{k}}_i+1})^T \big(A_i \mathbf{x}_i^{k+1} - A_i \mathbf{x}_i^*\big) \nonumber \\
+ &\sum_{k=0}^{K-1} \sum_{i \in \mathcal{A}_k^{\complement}} (\bm{\lambda}^{k+1} - \bm{\lambda}^{\bar{k}_i+1})^T \big(A_i \mathbf{x}_i^{k+1} - A_i \mathbf{x}_i^*\big) \nonumber \\
= &\sum_{k=0}^{K-1} \sum_{i \in \mathcal{A}_k^{\complement}} \sum_{l=\bar{\bar{k}}_i}^{\bar{\bar{k}}_i} (\bm{\lambda}^l - \bm{\lambda}^{l+1})^T \big(A_i \mathbf{x}_i^{k+1} - A_i \mathbf{x}_i^*\big) \nonumber \\
+ &\sum_{k=0}^{K-1} \sum_{i \in \mathcal{A}_k^{\complement}} \sum_{l=\bar{\bar{k}}_i+1}^{\bar{k}_i} (\bm{\lambda}^{l+1} - \bm{\lambda}^l)^T \big(A_i \mathbf{x}_i^{k+1} - A_i \mathbf{x}_i^*\big) \nonumber \\
+ &\sum_{k=0}^{K-1} \sum_{i \in \mathcal{A}_k^{\complement}} \sum_{l=\bar{k}_i+1}^k (\bm{\lambda}^{l+1} - \bm{\lambda}^l)^T \big(A_i \mathbf{x}_i^{k+1} - A_i \mathbf{x}_i^*\big) \nonumber \\
\leq &\sum_{i \in \mathcal{A}_k^{\complement}} \sum_{k=0}^{K-1} \sum_{l=\bar{\bar{k}}_i}^{\bar{\bar{k}}_i} \Big(\frac{1}{2 \delta^2}\lVert \bm{\lambda}^l - \bm{\lambda}^{l+1} \rVert_2^2 + \frac{\delta^2 \lVert A_i \rVert_2^2}{2}\lVert \mathbf{x}_i^{k+1} - \mathbf{x}_i^* \rVert_2^2\Big) \nonumber \\
+ &\sum_{i \in \mathcal{A}_k^{\complement}} \sum_{k=0}^{K-1} \sum_{l=\bar{\bar{k}}_i+1}^{\bar{k}_i} \Big(\frac{1}{2 \delta^2}\lVert \bm{\lambda}^{l+1} - \bm{\lambda}^l \rVert_2^2 + \frac{\delta^2 \lVert A_i \rVert_2^2}{2}\lVert \mathbf{x}_i^{k+1} - \mathbf{x}_i^* \rVert_2^2\Big) \nonumber \\
+ &\sum_{i \in \mathcal{A}_k^{\complement}} \sum_{k=0}^{K-1} \sum_{l=\bar{k}_i+1}^k \Big(\frac{1}{2 \delta^2}\lVert \bm{\lambda}^{l+1} - \bm{\lambda}^l \rVert_2^2 + \frac{\delta^2 \lVert A_i \rVert_2^2}{2}\lVert \mathbf{x}_i^{k+1} - \mathbf{x}_i^* \rVert_2^2\Big) \nonumber \\
\leq &\sum_{i=1}^N \sum_{k=0}^{K-1} (\tau - 1) \Big(\frac{1}{2 \delta^2}\lVert \bm{\lambda}^k - \bm{\lambda}^{k+1} \rVert_2^2 + \frac{\delta^2 \lVert A_i \rVert_2^2}{2}\lVert \mathbf{x}_i^{k+1} - \mathbf{x}_i^* \rVert_2^2\Big) \nonumber \\
+ &\sum_{i=1}^N \sum_{k=0}^{K-1} (\tau - 1) \Big(\frac{1}{2 \delta^2}\lVert \bm{\lambda}^{k+1} - \bm{\lambda}^k \rVert_2^2 + \frac{\delta^2 \lVert A_i \rVert_2^2}{2}\lVert \mathbf{x}_i^{k+1} - \mathbf{x}_i^* \rVert_2^2\Big) \nonumber \\
+ &\sum_{i=1}^N \sum_{k=0}^{K-1} (\tau - 1) \Big(\frac{1}{2 \delta^2}\lVert \bm{\lambda}^{k+1} - \bm{\lambda}^k \rVert_2^2 + \frac{\delta^2 \lVert A_i \rVert_2^2}{2}\lVert \mathbf{x}_i^{k+1} - \mathbf{x}_i^* \rVert_2^2\Big) \nonumber \\
\leq &\frac{3 (\tau - 1)N}{2 \delta^2} \sum_{k=0}^{K-1} \lVert \bm{\lambda}^{k+1} - \bm{\lambda}^k \rVert_2^2 + \frac{3 (\tau - 1) \delta^2}{2} A_{\max}^2 \sum_{k=0}^{K-1} \sum_{i=1}^N \lVert \mathbf{x}_i^{k+1} - \mathbf{x}_i^* \rVert_2^2. \label{eq: AN-PCPM (g)}
\end{align}
\endgroup
By substituting \eqref{eq: AN-PCPM (e)}, \eqref{eq: AN-PCPM (f)} and \eqref{eq: AN-PCPM (g)} into \eqref{eq: AN-PCPM Average Inequality} and denoting
$$
\bar{\mathbf{x}}_i^K = \frac{1}{K} \sum_{k=0}^{K-1} \mathbf{x}_i^{k+1},
$$
for all $i = 1 \dots N$, we have:
\begingroup
\begin{align}
&\sum_{i=1}^N f_i(\bar{\mathbf{x}}_i^K) - \sum_{i=1}^N f_i(\mathbf{x}_i^*) + \bm{\lambda}^T \Big(\sum_{i=1}^N A_i \bar{\mathbf{x}}_i^K - \mathbf{b}\Big) \nonumber \\
\leq &\frac{1}{K} \sum_{k=0}^{K-1} \sum_{i=1}^N f_i(\mathbf{x}_i^{k+1}) - \sum_{i=1}^N f_i(\mathbf{x}_i^*) + \frac{1}{K} \bm{\lambda}^T \sum_{k=0}^{K-1} \Big(\sum_{i=1}^N A_i \mathbf{x}_i^{k+1} - \mathbf{b}\Big) \nonumber \\
\leq &\frac{1}{2 \rho K} \lVert \bm{\lambda}^0 - \bm{\lambda} \rVert_2^2 - \frac{1}{2 \rho K} \lVert \bm{\lambda}^K - \bm{\lambda} \rVert_2^2 \nonumber \\
+ &(-\frac{1}{2 \rho K} + \frac{5 (\tau - 1) N}{2 \delta^2 K}) \sum_{k=0}^{K-1} \lVert \bm{\lambda}^{k+1} - \bm{\lambda}^k \rVert_2^2 \nonumber \\
+ &(-\frac{\sigma_{min}}{2 K} + \frac{5 (\tau - 1) \delta^2 A_{\max}^2}{2 K}) \sum_{k=0}^{K-1} \sum_{i=1}^N \lVert \mathbf{x}_i^{k+1} - \mathbf{x}_i^* \rVert_2^2,
\end{align}
\endgroup
where the first inequality is due to the convexity of $f_i$ for all $i = 1 \dots N$. By choosing $\delta^2 \leq \frac{\sigma_{min}}{5 (\tau - 1) A_{\max}^2}$ and $\rho \leq \frac{\delta^2}{5 (\tau - 1) N}$, which implies
$$
\rho \leq \frac{\sigma_{min}}{25 N (\tau - 1)^2 A_{\max}^2},
$$
we derive:
\begin{equation}
\sum_{i=1}^N f_i(\bar{\mathbf{x}}_i^K) - \sum_{i=1}^N f_i(\mathbf{x}_i^*) + \bm{\lambda}^T \Big(\sum_{i=1}^N A_i \bar{\mathbf{x}}_i^K - \mathbf{b}\Big) \leq \frac{1}{2 \rho K} \lVert \bm{\lambda}^0 - \bm{\lambda} \rVert_2^2.
\end{equation}
Let $\bm{\lambda} = \bm{\lambda}^* + \frac{\sum_{i=1}^N A_i \bar{\mathbf{x}}^K - \mathbf{b}}{\lVert \sum_{i=1}^N A_i \bar{\mathbf{x}}^K - \mathbf{b} \rVert_2}$, and note that by the duality theory, we have:
\begin{equation}
\sum_{i=1}^N f_i(\bar{\mathbf{x}}_i^K) - \sum_{i=1}^N f_i(\mathbf{x}_i^*) + (\bm{\lambda}^*)^T \Big(\sum_{i=1}^N A_i \bar{\mathbf{x}}_i^K - \mathbf{b}\Big) \geq 0.
\end{equation}
Then, we further derive:
\begingroup
\begin{align}
&\big\lVert \sum_{i=1}^N A_i \bar{\mathbf{x}}^K - \mathbf{b}\big\rVert_2 \nonumber \\
\leq &\sum_{i=1}^N f_i(\bar{\mathbf{x}}_i^K) - \sum_{i=1}^N f_i(\mathbf{x}_i^*) + (\bm{\lambda}^*)^T \Big(\sum_{i=1}^N A_i \bar{\mathbf{x}}_i^K - \mathbf{b}\Big) + \big\lVert \sum_{i=1}^N A_i \bar{\mathbf{x}}^K - \mathbf{b}\big\rVert_2 \nonumber \\
\leq &\frac{1}{2 \rho K} \Bigg\lVert \bm{\lambda}^0 - \Big(\bm{\lambda}^* + \frac{\sum_{i=1}^N A_i \bar{\mathbf{x}}^K - \mathbf{b}}{\lVert \sum_{i=1}^N A_i \bar{\mathbf{x}}^K - \mathbf{b}\rVert_2}\Big) \Bigg\rVert_2^2,
\end{align}
\endgroup
which implies
$$
\big\lVert \sum_{i=1}^N A_i \bar{\mathbf{x}}^K - \mathbf{b}\big\rVert_2 \leq \frac{1}{K} \Big[\frac{1}{2 \rho} \underset{\lVert \bm{\gamma} \rVert_2 \leq 1}{\max} \lVert \bm{\lambda}^0 - \bm{\lambda}^* - \bm{\gamma} \rVert_2^2\Big] \overset{\Delta}{=} \frac{C_1}{K}.
$$
On the other hand, let $\bm{\lambda} = \bm{\lambda}^*$, and note that:
\begin{equation}
\begin{aligned}
&\sum_{i=1}^N f_i(\bar{\mathbf{x}}_i^K) - \sum_{i=1}^N f_i(\mathbf{x}_i^*) + (\bm{\lambda}^*)^T \Big(\sum_{i=1}^N A_i \bar{\mathbf{x}}_i^K - \mathbf{b}\Big) \\
\geq &\Big\lvert \sum_{i=1}^N f_i(\bar{\mathbf{x}}_i^K) - \sum_{i=1}^N f_i(\mathbf{x}_i^*) \Big\rvert - \lVert \bm{\lambda}^* \rVert_2 \cdot \lVert \sum_{i=1}^N A_i \bar{\mathbf{x}}_i^K - \mathbf{b}\rVert_2.
\end{aligned}
\end{equation}
Then, we have:
\begin{equation}
\Big\lvert \sum_{i=1}^N f_i(\bar{\mathbf{x}}_i^K) - \sum_{i=1}^N f_i(\mathbf{x}_i^*) \Big\rvert \leq \lVert \bm{\lambda}^* \rVert_2 \cdot \lVert \sum_{i=1}^N A_i \bar{\mathbf{x}}_i^K - \mathbf{b}\rVert_2 + \frac{1}{K} (\frac{1}{2 \rho} \lVert \bm{\lambda}^0 - \bm{\lambda}^* \rVert_2^2) \overset{\Delta}{=} \frac{\delta_{\bm{\lambda}} C_1 + C_2}{K},
\end{equation}
where $\delta_{\bm{\lambda}} = \lVert \bm{\lambda}^* \rVert_2$ and $C_2 = \frac{1}{2 \rho} \lVert \bm{\lambda}^0 - \bm{\lambda}^* \rVert_2^2$.
\end{appendix}
\end{document}